\numberwithin{equation}{section}
\theoremstyle{plain}
\newtheorem{thm}{Theorem}[section]
\newtheorem{prop}[thm]{Proposition}
\newtheorem{defi}[thm]{Definition}
\newtheorem{lem}[thm]{Lemma}
\newtheorem{cor}[thm]{Corollary}
\newtheorem{conj}[thm]{Conjecture}
\newtheorem{eg}[thm]{{Example}}
\theoremstyle{remark}
\newtheorem{rema}[thm]{Remark}
\newcommand{\abv}[2]{\genfrac{}{}{0pt}{}{#1}{#2}}
\newcommand{\ad}{{\mbox{\upshape{ad}}}}
\newcommand{\Ad}{{\mbox{\upshape{Ad}}}}
\newcommand{\Aut}{\mathrm{Aut}}
\newcommand{\bc}{{\mathbf{c}}}
\newcommand{\bd}{{\mathbf{d}}}
\newcommand{\bs}{{\mathbf{s}}}
\newcommand{\C}{{\mathbb C}}
\newcommand{\N}{{\mathbb N}}
\newcommand{\cC}{{\mathcal C}}
\newcommand{\cD}{{\mathcal D}}
\newcommand{\cH}{{\mathcal H}}
\newcommand{\cJ}{{\mathcal J}}
\newcommand{\cM}{{\mathcal M}}
\newcommand{\cO}{{\mathcal O}}
\newcommand{\cS}{{\mathcal S}}
\newcommand{\cU}{{\mathcal U}}
\newcommand{\cW}{{\mathcal W}}
\newcommand{\cZ}{{\mathcal Z}}
\newcommand{\field}{{\mathbb K}}
\newcommand{\gfrak}{{\mathfrak g}}
\newcommand{\glfrak}{{\mathfrak{gl}}}
\newcommand{\hfrak}{{\mathfrak h}}
\newcommand{\Htil}{\tilde{H}}
\newcommand{\Hom}{{\mathrm{Hom}}}
\newcommand{\id}{{\mbox{id}}}
\newcommand{\kfrak}{{\mathfrak k}}
\newcommand{\kow}{{\varDelta}}
\newcommand{\ot}{\otimes}
\newcommand{\Q}{\mathbb Q}
\newcommand{\slfrak}{{\mathfrak{sl}}}
\newcommand{\shalfnote}{{\tiny \halfnote}}
\newcommand{\uc}{\mathbf{c}}
\newcommand{\uqg}{{U_q(\mathfrak{g})}}
\newcommand{\uqgp}{{U_q(\mathfrak{g'})}}
\newcommand{\vep}{\varepsilon}
\newcommand{\wght}{\mathrm{wt}}
\newcommand{\Z}{{\mathbb Z}}
\title{The bar involution for quantum symmetric pairs}
\thanks{Research supported by EPSRC grant EP/K025384/1}
\author{Martina Balagovi\'c}
\author{Stefan Kolb}
\address{Martina Balagovi\'c and Stefan Kolb, School of Mathematics and Statistics, Newcastle University, Newcastle upon Tyne NE1 7RU, UK}
\email{martina.balagovic@newcastle.ac.uk}
\email{stefan.kolb@newcastle.ac.uk}
\subjclass[2010]{17B37; 81R50}
\keywords{Quantum groups, bar involution, coideal subalgebras, quantum symmetric pairs}
\begin{document}

\begin{abstract}
 We construct a bar involution for quantum symmetric pair coideal subalgebras $B_{\bc,\bs}$ corresponding to involutive automorphisms of the second kind of symmetrizable Kac-Moody algebras. To this end we give unified presentations of these algebras in terms of generators and relations extending previous results by G.~Letzter and the second named author. We specify precisely the set of parameters $\bc$ for which such an intrinsic bar involution exists.
\end{abstract}

\maketitle
\section{Introduction}
Let $\gfrak$ be a symmetrizable Kac-Moody algebra and let $\uqg$ denote the corresponding quantized enveloping algebra defined over $\field(q)$, where $\field$ denotes a field of characteristic $0$. The bar involution for $\uqg$ is a fundamental ingredient in deep applications of quantum groups. It is essential in Lusztig's formulation and construction of canonical bases for $\uqg$ \cite[Chapter 14]{b-Lusztig94}. Moreover, the bar involution determines the quasi-$R$-matrix of $\uqg$, see \cite[Chapter 4]{b-Lusztig94}, and hence underlies Schur-Jimbo duality and applications of quantum groups in low dimensional topology. 

Let $\theta:\gfrak\rightarrow \gfrak$ be an involutive automorphism of the second kind \cite[III.1]{a-Levstein88} and let $\kfrak\subset \gfrak$ be the $(+1)$-eigenspace of $\theta$. The theory of quantum symmetric pairs provides quantum group analogs of the universal enveloping algebra $U(\kfrak)$ as one-sided coideal subalgebras of $\uqg$. For $\gfrak$ of finite type a comprehensive theory of quantum symmetric pair coideal subalgebras was developed by G.~Letzter in \cite{a-Letzter99a}, \cite{MSRI-Letzter}. This theory was extended to the Kac-Moody case in \cite{a-Kolb12p}. 
The original motivation behind the construction of quantum symmetric pairs was to provide new interpretations of Macdonald polynomials as zonal spherical functions on quantum symmetric spaces \cite{a-Letzter04}. In this setting, the construction of quantum symmetric pairs only relies on the structure theory of quantized enveloping algebras and the classification of involutive automorphisms of the second kind.  

Over the past year several papers appeared which consider special examples of quantum symmetric pairs in a wider representation theoretic context. M.~Ehrig and C.~Stroppel study parabolic category $\cO$ in type $D$ \cite{a-EhrigStroppel13p}. They show that translation functors yield categorifications of actions of the quantum symmetric pair coideal subalgebras corresponding to the symmetric pairs $(\glfrak(2m), \glfrak(m)\times \glfrak(m))$ and $(\glfrak(2m+1), \glfrak(m)\times \glfrak(m+1))$. In this setting the parameter $q$ appears as shift in the grading, and a bar involution naturally appears as graded duality.

In \cite{a-BaoWang13p} H. Bao and W. Wang solve the irreducible character problem in the BGG category $\cO$ for the ortho-symplectic Lie superalgebras $\mathfrak{osp}(2m{+}1|2n)$. To this end they develop a theory of canonical bases for the quantum symmetric pair coideal subalgebras corresponding to the symmetric pairs $(\glfrak(2m), \glfrak(m)\times \glfrak(m))$ and $(\glfrak(2m+1), \glfrak(m)\times \glfrak(m+1))$, respectively. In the first part of their paper Bao and Wang build this theory in astonishing similarity to Lusztig's exposition in \cite{b-Lusztig94}. A crucial ingredient in their construction is an intertwiner $\Upsilon$ between the bar involution for $\uqg$ and a new bar involution on the quantum symmetric pair coideal subalgebras under consideration. In the sequel \cite{a-BaoKuLiWang14p} it is shown that these coideal subalgebras have a geometric interpretation in terms of partial flag varieties of type $B/C$, analogous to the geometric construction of $U_q(\glfrak(N))$ by Beilinson, Lusztig, and MacPherson \cite{a-BLM90}.

The papers \cite{a-BaoWang13p} and \cite{a-EhrigStroppel13p} suggest that the bar involution is an essential tool to develop the theory of quantum symmetric pairs further. Let $B_{\bc,\bs}\subset \uqg$ be a quantum symmetric pair coideal subalgebra. The subalgebra $B_{\bc,\bs}$ is not preserved under the bar involution on $\uqg$. In \cite[0.5]{a-BaoWang13p} Bao and Wang explicitly state the existence of a bar involution for quantum symmetric pair coideal subalgebras, however, without proof and without reference to the parameters $\bc$. The aim of the present paper is to define this new bar involution on $B_{\bc,\bs}$, analogous to the bar involution on $\uqg$, and to specify the parameters for which this is possible. 

More explicitly, recall that involutive automorphisms of the second kind are determined by admissible pairs $(X,\tau)$, where $X$ denotes a subset of the set $I$ of nodes of the Dynkin diagram for $\gfrak$, and $\tau$ is a diagram automorphism, see \cite[Theorem 2.7]{a-Kolb12p}. The quantum symmetric pair coideal subalgebra $B_{\bc,\bs}=B_{\bc,\bs}(X,\tau)$ corresponding to the admissible pair $(X,\tau)$ is generated by the subalgebra $\cM_X=U_q(\gfrak_X)\subset \uqg$, a torus $U^0_\Theta$, and certain elements $B_i$ for $i\in I$ which depend on additional parameters $\bc=(c_i)_{i\in I\setminus X}, \bs=(s_i)_{i\in I\setminus X}\in \field(q)^{I\setminus X}$, see Section \ref{sec:QSP} for details. The main result of the present paper, given in Theorem \ref{thm:bar-involution}, Corollary \ref{cor:nu=1}, and Remark \ref{rem:nu-1}, states that for a suitable choice of parameters $\bc$, there exists a $\field$-algebra automorphism
\begin{align*}
 \overline{\phantom{B}}:B_{\bc,\bs}\rightarrow 
          B_{\bc,\bs}, \qquad x\mapsto \overline{x}
\end{align*}
which coincides with the usual bar involution for $\uqg$ on $\cM_X U^0_\Theta$, and which satisfies
\begin{align}\label{eq:Bibar=Bi}
  \overline{B_i}=B_i \qquad \mbox{for all $i\in I$.}
\end{align}
We explicitly describe all parameters $\bc$ for which the bar involution is well-defined. In Letzter's theory two quantum symmetric pair coideal subalgebras are considered equivalent if they are transformed into each other by a Hopf algebra automorphism of $\uqg$. We find particularly simple choices for the parameters $\bc$ in each equivalence class. 

Condition \eqref{eq:Bibar=Bi} seems natural if one attempts to generalize the constructions in \cite[Part 1]{a-BaoWang13p} to all quantum symmetric pairs. One can show that the bar involution described above allows the construction of an intertwiner $\Upsilon$ as in \cite[Section 2]{a-BaoWang13p} which in turn gives rise to a family of $B_{\bc,\bs}$-module isomorphisms between integrable highest weight modules for $\uqg$. This family provides a universal K-matrix which is a fundamental structure that has its origins in the theory of quantum integrable systems with boundary \cite{a-Cher84}, \cite{a-Sklyanin88}. The details are contained in \cite{a-BalaKolb15p}.

The algebras $B_{\bc,\bs}$ allow a presentation in terms of generators and relations \cite[Section 7]{a-Letzter03}, \cite[Section 7]{a-Kolb12p}. In particular, the generators $B_i$ satisfy inhomogeneous quantum Serre relations
\begin{align}\label{eq:qSerreBi}
  \sum_{k=0}^{1-a_{ij}}(-1)^k
  \left[\begin{matrix}1-a_{ij}\\k\end{matrix}\right]_{q_i}
  B_i^{1-a_{ij}-k} B_j B_i^k= C_{ij}(\bc) \qquad \mbox{for all $i,j\in I$}
\end{align}
for which the right hand side $C_{ij}(\bc)$ depends on the parameters $\bc$ and is of lower order in the generators $B_i$ for $i\in I$. To prove the main result of the present paper, it suffices to check whether the new bar involution as defined above on generators respects the relations \eqref{eq:qSerreBi}. Let $E_j$ for $j\in I$ denote the generators of the positive part of $\uqg$, let $K_j, K_j^{-1}$ denote the corresponding group like elements, and let $T_{w_X}$ denote Lusztig's braid group action corresponding to the longest word in the parabolic subgroup $W_X$ of the Weyl group $W$. In Theorems \ref{thm:Citaui} and \ref{thm:relsBc3} we give new unified expressions for the right hand side $C_{ij}(\bc)$ of \eqref{eq:qSerreBi} in terms of the generators $B_i$, the coefficients $\bc$, and certain elements
\begin{align}\label{eq:Zi-intro}
  \cZ_i = -s(\tau(i)) r_{\tau(i)} \big(T_{w_X}(E_{\tau(i)})\big) K_i^{-1} K_{\tau(i)} \in \cM_X U_\Theta^0\qquad \mbox{for $i\in I\setminus X$}
\end{align}
where $s(i)\in \field$ are fourth roots of unity defined by \eqref{eq:s(i)} and $r_{\tau(i)}$ denotes the skew derivation given in \cite[1.2.13]{b-Lusztig94}. Using this new presentation of $C_{ij}(\bc)$ we show in Theorem \ref{thm:bar-involution} that the new bar involution on $B_{\bc,\bs}$ is well-defined if and only if
\begin{align}\label{eq:bar-involution-intro}
  \overline{c_i \cZ_i}= q^{(\alpha_i,\alpha_{\tau(i)})} c_{\tau(i)} \cZ_{\tau(i)}
\end{align}
for essentially all $i\in I\setminus X$. One hence has to calculate the usual bar involution of $\cZ_i\in \cM_X U^0_\Theta$. We show in Proposition \ref{prop:ocZ} that
\begin{align}\label{eq:Zibar-intro}
  \overline{\cZ_i} = \nu_i \ell_i \cZ_{\tau(i)}\qquad \mbox{for $i\in I\setminus X$}
\end{align}
where $\ell_i$ is an explicitly known $q$-power and $\nu_i\in \{\pm 1\}$. If $\gfrak$ is of finite type then $\nu_i=1$, and we expect this to hold in general. 
With \eqref{eq:bar-involution-intro} and \eqref{eq:Zibar-intro} at hand it is straightforward to find the parameters $\bc$ for which the bar involution on $B_{\bc,\bs}$ exists, see Corollary \ref{cor:nu=1} and Remark \ref{rem:nu-1}.

The paper is organized as follows. In Section \ref{sec:skew-ders} we fix notations an conventions for quantum groups and prove preliminary results which will allow us to verify Relation \eqref{eq:bar-involution-intro} in Proposition \ref{prop:ocZ}. In Subsection \ref{sec:skew-derivations} we separately treat the case that $\gfrak$ is of finite type, because in this case direct calculations show that the coefficients $\nu_i$ in \eqref{eq:Zibar-intro} are always equal to $1$. An independent argument is given in Subsection \ref{sec:loc-fin-ri} in the general Kac-Moody case. In Section \ref{sec:bar-involution-QSP} we first recall the construction of quantum symmetric pairs and their presentations in terms of generators and relations. Unified formulas for the terms $C_{ij}(\bc)$ are given in Subsection \ref{sec:relations}. The somewhat technical proof of Theorem \ref{thm:Citaui} is postponed to Subsection \ref{sec:proof-relations}. With these preparations at hand we establish the existence of the bar involution in Subsection \ref{sec:existence}. This leads to the explicit description of the set of parameters $\bc$ for which a bar involution exists.  In Subsection \ref{sec:equivalence} we discuss equivalence of quantum symmetric pair coideal subalgebras with a bar involution.

\medskip

\noindent{\bf Acknowledgments.} The authors are grateful to Catharina Stroppel, Weiqiang Wang, and two anonymous referees for useful comments.

\medskip

\section{Skew derivations and braid group action}\label{sec:skew-ders}
To show the existence of the bar involution for quantum symmetric pair coideal subalgebras we need to know the behavior of the elements $\cZ_i$ given by \eqref{eq:Zi-intro} under the usual bar involution for $\uqg$. The purpose of the present section is to provide the tools necessary to give a formula for $\overline{\cZ_i}$ in Proposition \ref{prop:ocZ}. Along the way we will fix notations and conventions for quantum groups.
\subsection{Preliminaries}
All through this paper the symbol $\N$ denotes the positive integers, $\N_0=\N\cup\{0\}$, we write $\Z$ to denote the integers, $\Q$ are the rational numbers, and $\field$ denotes an algebraically closed field of characteristic $0$. We write $\field(q)$ to denote the field of rational functions in an indeterminate $q$.  

Let $\gfrak=\gfrak(A)$ be a symmetrizable Kac-Moody algebra defined over $\field$ corresponding to a generalized Cartan matrix $A=(a_{ij})_{i,j\in I}$ where $I$ denotes some finite set. By definition, there exists a diagonal matrix $D=\mathrm{diag}(\epsilon_i\,|\,i\in I)$ with coprime entries $\epsilon_i\in \N$ such that $DA$ is a symmetric matrix. Let $\hfrak$ be the Cartan subalgebra of $\gfrak$ and let $\Pi=\{\alpha_i\,|\,i\in I\}\subset \hfrak^\ast$ and $\Pi^\vee=\{h_i\,|\,i\in I\}\subset \hfrak$ denote the sets of simple roots and simple coroots, respectively. One has in particular $\alpha_j(h_i)=a_{ij}$. Let $(\cdot,\cdot)$ denote a nondegenerate, symmetric, bilinear form on $\hfrak^\ast$ such that
\begin{align*}
  (\alpha_i,\alpha_j)=\epsilon_i a_{ij} \qquad \mbox{for all $i,j\in I$.}
\end{align*}
The quantized enveloping algebra $\uqg$ of $\gfrak$ is the $\field(q)$ algebra generated by elements $E_i$, $F_i$, $K_i$, $K_i^{-1}$ for all $i\in I$ and relations
\begin{enumerate}
  \item $K_i K_j=K_j K_i$ and $K_i K_i^{-1}=1=K_i^{-1} K_i$ for all $i,j\in I$. 
  \item $K_i E_j= q^{(\alpha_i,\alpha_j)} E_j K_i$ for all $i,j\in I$.
  \item $K_i F_j= q^{-(\alpha_i,\alpha_j)} F_j K_i$ for all $i,j\in I$.
  \item $\displaystyle E_i F_j - F_j E_i=\delta_{ij} \frac{K_i-K_i^{-1}}{q_i-q_i^{-1}}$ for all $i,j\in I$ where $q_i=q^{\epsilon_i}$.
  \item \label{q-Serre} The quantum Serre relations given in \cite[3.1.1.(e)]{b-Lusztig94}.
\end{enumerate}
For any $i,j\in I$ let $F_{ij}(x,y)$ denote the noncommutative polynomial in two variables defined by
\begin{align}\label{eq:Fij-def}
  F_{ij}(x,y)=\sum_{n=0}^{1-a_{ij}}(-1)^n
  \left[\begin{matrix}1-a_{ij}\\n\end{matrix}\right]_{q_i}
  x^{1-a_{ij}-n}yx^n.  
\end{align}
where $\left[ \abv{1-a_{ij}}{n} \right]_{q_i}$ denotes the $q_i$-binomial coefficient defined for instance in \cite[1.3.3]{b-Lusztig94}. By \cite[Corollary 33.1.5]{b-Lusztig94}, the quantum Serre relations in (\ref{q-Serre}) can be written in the form
\begin{align}\label{eq:q-Serre}
  F_{ij}(E_i,E_j)=F_{ij}(F_i,F_j)=0\qquad \mbox{for all $i,j\in I$}.
\end{align}
The algebra $\uqg$ is a Hopf algebra with coproduct $\kow$, counit $\vep$, and antipode $S$ given by
\begin{align}
  \kow(E_i)&=E_i \ot 1 + K_i\ot E_i,& \vep(E_i)&=0, & S(E_i)&=-K_i^{-1} E_i,\label{eq:E-copr}\\
  \kow(F_i)&=F_i\ot K_i^{-1} + 1 \ot F_i,&\vep(F_i)&=0, & S(F_i)&=-F_i K_i,\label{eq:F-copr}\\
  \kow(K_i)&=K_i \ot K_i,& \vep(K_i)&=1, & S(K_i)&=K_i^{-1}\label{eq:K-copr}
\end{align}
for all $i\in I$. 
\begin{rema}
  In \cite{a-Kolb12p} the Hopf algebra $\uqg$ was denoted by the symbol $\uqgp$ because it is a $q$-deformation of the enveloping algebra of the derived Lie algebra $\gfrak'=[\gfrak,\gfrak]$ of $\gfrak$. If $\gfrak$ is of finite type then $\gfrak=\gfrak'$. Here we follow the common convention to work only with the $q$-analog of the derived algebra and drop the distinction in notation.  In \cite[Definition 5.6]{a-Kolb12p} quantum symmetric pairs are defined as coideal subalgebras of $\uqg$ as defined here. An extension to the larger Hopf algebra may be obtained by adding group-like elements to the coideal subalgebra, see \cite[Remark 5.8]{a-Kolb12p}.
\end{rema}
As usual we write $U^+$, $U^-$, and $U^0$ to denote the $\field(q)$-subalgebra of $\uqg$ generated by $\{E_i\,|\,i\in I\}$, $\{F_i\,|\,i\in I\}$, and $\{K_i, K_i^{-1}\,|\,i\in I\}$, respectively.
Let $Q= \Z \Pi$ denote the root lattice of $\gfrak$ and define $Q^+= \N_0 \Pi$ and $Q^-=-\N_0 \Pi$. For $\beta=\sum_{i\in I} n_i \alpha_i\in Q$ we write $K_\beta=\prod_{i\in I}K_i^{n_i}$. The algebra $U^+$ has a $Q^+$-grading given by $\deg(E_i)=\alpha_i$. Similarly, $U^-$ has a $Q^-$-grading given by $\deg(F_i)=-\alpha_i$. Via the triangular decomposition
\begin{align*}
  \uqg \cong U^+\ot U^0 \ot U^-
\end{align*}
one obtains a $Q$-algebra grading on $\uqg$ such that
\begin{align*}
  \uqg_\gamma = \mathop{\bigoplus}_{\alpha+\beta=\gamma} U^+_\alpha U^0 U^-_\beta \qquad \mbox{for any $\gamma \in Q$.}
\end{align*}
The Weyl group $W$ of $\gfrak$ is the Coxeter group generated by elements $\{s_i\,|\,i\in I\}$ and defining relations 
\begin{align}\label{Coxeter}
  s_i^2=1, \qquad (s_i s_j)^{m_{ij}}=1 \qquad \mbox{for all $i,j\in I$}
\end{align}
where $m_{ij}=2,3,4,6,$ and $\infty$ if
$a_{ij}a_{ji}=0,1,2,3$, and $\ge 4$, respectively. The group $W$ acts on $\hfrak$ and $\hfrak^\ast$ by
\begin{align*}
  s_i(h)&=h-\alpha_i(h)h_i, \quad \mbox{ for all $i\in I$, $h\in \hfrak$},\\
  s_i(\alpha)&=\alpha-\alpha(h_i)\alpha_i \quad \mbox{ for all $i\in I$, $\alpha\in \hfrak^\ast$},
\end{align*}
respectively. For any $i\in I$ let $T_i$ denote the algebra automorphism of $\uqg$ denoted by $T_{i,1}''$ in \cite[37.1]{b-Lusztig94}. The automorphisms $T_i$ satisfy braid relations. This implies that for any $w\in W$ one has an algebra automorphism
\begin{align*}
  T_w:\uqg \rightarrow \uqg, \qquad T_w=T_{i_1} T_{i_2} \dots T_{i_k}
\end{align*}
where $w=s_{i_1} s_{i_2} \dots s_{i_k}$ is any reduced expression of $w$.

\subsection{The skew derivations $r_i$ and ${}_i r$}\label{sec:skew-derivations}
In Proposition \ref{prop:finite-st} and \ref{prop:sigma-tau-ri} we formulate an invariance property which will be crucial to prove the existence of the bar involution for quantum symmetric pairs. This invariance property combines Lusztig's braid group action for admissible pairs with skew derivations $r_i,\, {}_ir:U^+\rightarrow U^+$ defined in \cite[1.2.13]{b-Lusztig94}. The skew derivations $r_i$ and ${}_ir$ describe the first order behavior of elements of $U^+$ under the coproduct $\kow$. More explicitly, for every $x\in U^+_\beta$, $\beta\in Q^+$, there exist uniquely determined elements $r_i(x), {}_ir(x)\in U^+$ such that
\begin{align}
  \kow(x) &= x\ot 1 +\sum_{i\in I} r_i(x)K_i \ot E_i + \mathrm{(rest)}_1\label{eq:ri-def}\\
  \kow(x) &= K_\beta\ot x +\sum_{i\in I} E_i K_{\beta-\alpha_i}\ot {_i}r(x) + \mathrm{(rest)}_2\label{eq:ir-def}
\end{align}
where 
$  (\mathrm{rest})_1\in \sum_{0<\alpha\in Q^+\setminus\Pi} U^+_{\beta-\alpha}K_\alpha\otimes U^+_\alpha$ and $(\mathrm{rest})_2\in \sum_{0<\alpha\in Q^+\setminus\Pi} U^+_{\alpha}K_{\beta-\alpha}\otimes U^+_{\beta-\alpha}.$
By \cite[Lemma 6.17]{b-Jantzen96}, \cite[Proposition 3.1.6]{b-Lusztig94} one has
\begin{align}\label{eq:ri-Fcomm}
  x F_i - F_i x = \frac{r_i(x)K_i - K_i^{-1}{}_ir(x)}{q_i-q_i^{-1}} \qquad \mbox{for all $i\in I$}. 
\end{align}
This can be reformulated as follows. For  $x=E_{i_1}\dots E_{i_r}\in U^+$ one has
\begin{align}\label{eq:rix-explicit}
  r_i(x)=\sum_{j=1\atop i_j=i}^r E_{i_1} \dots E_{i_{j-1}}K_i E_{i_{j+1}} \dots E_{i_r} K_i^{-1}.
\end{align}
As in \cite[3.1.3]{b-Lusztig94} let $\sigma:\uqg\rightarrow \uqg$ denote the $\field(q)$-algebra antiautomorphism determined by 
\begin{align*}
 \sigma(E_i)&=E_i, & \sigma(F_i)&=F_i,& \sigma(K_i)&=K_i^{-1} \qquad \mbox{for all $i\in I$}.
\end{align*}
Here the requirement that $\sigma$ is an algebra antiautomorphism means that $\sigma(xy)=\sigma(y)\sigma(x)$ for all $x,y\in \uqg$.
Relation \eqref{eq:ri-Fcomm} implies that the map $\sigma$ intertwines the skew derivations $r_i$ and ${}_i r$ as follows
\begin{align}\label{eq:sigma-ri}
  \sigma \circ r_i = {}_i r \circ \sigma\qquad \mbox{for all $i\in I$}.
\end{align}
Moreover, by \cite[37.2.4]{b-Lusztig94} the map $\sigma$ intertwines the automorphism $T_i$ with its inverse
\begin{align}\label{eq:Ti-sigma}
  T_i \circ \sigma = \sigma \circ T_i^{-1} \qquad \mbox{for all $i\in I$}.
\end{align}

Let $\Aut(A)$ denote the group of all permutations $\tau$ of the set $I$ for which $a_{ij}=a_{\tau(i)\tau(j)}$ for all $i,j\in I$. For any subset $X\subseteq I$ define
\begin{align*}
  \Aut(A,X)=\{\tau\in \Aut(A)\,|\,\tau(X)=X\}.
\end{align*}
We recall the notion of an admissible pair from \cite[Definition 2.3]{a-Kolb12p}. For any subset $X\subseteq I$ of finite type let $w_X\in W$ denote the longest element in the parabolic subgroup $W_X \subseteq W$ corresponding to $X$. Moreover, let $\gfrak_X\subseteq \gfrak$ denote the semisimple Lie algebra corresponding to $X$ and let $\Phi_X$ denote its root system. We write
$\rho_X$ and $\rho^\vee_X$ to denote the half sum of positive roots and of positive coroots for the root system $\Phi_X$, respectively.
\begin{defi}\label{def:admissible}
  A pair $(X,\tau)$ consisting of a subset $X\subseteq I$ of finite type and an
  element $\tau\in \Aut(A,X)$ is called admissible if the following
  conditions are satisfied:
  \begin{enumerate}
    \item $\tau^2=\id_I$.
    \item \label{adm2} The action of $\tau$ on $X$ coincides with the action of $-w_X$.
    \item \label{adm3} If $j\in I\setminus X$ and $\tau(j)=j$ then
         $\alpha_j(\rho_X^\vee)\in \Z$. 
   \end{enumerate}
\end{defi}
As proved in \cite{a-KW92}, admissible pairs parametrize involutive automorphism of the second kind of $\gfrak$ up to conjugation, see also \cite[Theorem 2.7]{a-Kolb12p}.
If $\gfrak$ is of finite type, then the pair $(X=I,\tau=-w_X)$ is always admissible. The remaining admissible pairs for indecomposable $\gfrak$ of finite type are given by the table in \cite[p.32]{a-Araki62} where the black dots correspond to elements in the set $X$ and the arrows indicate the diagram automorphism $\tau$. This classification allows us to verify the following proposition case by case. 
\begin{prop}\label{prop:finite-st}
  Assume that $\gfrak$ is of finite type and let $(X,\tau)$ be an admissible pair for $\gfrak$. Then
  \begin{align}\label{eq:sigma-tau}
    \sigma\circ \tau(r_i(T_{w_X}(E_i)))=r_{i}(T_{w_X}(E_{i}))
  \end{align} 
  holds for all $i\in I\setminus X$.
\end{prop}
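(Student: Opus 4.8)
The plan is to verify \eqref{eq:sigma-tau} directly, pair by pair, using the classification of admissible pairs for indecomposable $\gfrak$ of finite type in \cite{a-Araki62}. Two preliminary observations cut the work down substantially. First, if $i\in I\setminus X$ is not joined to any node of $X$ in the Dynkin diagram, then $s_j(\alpha_i)=\alpha_i$ for every $j\in X$, so $w_X(\alpha_i)=\alpha_i$ and $T_{w_X}(E_i)=E_i$; by \eqref{eq:ri-def} one has $r_i(E_i)=1$, and both sides of \eqref{eq:sigma-tau} equal $1$. Hence for each admissible pair only the short, explicit list of nodes $i\in I\setminus X$ joined to $X$ needs to be treated; in particular all split pairs, where $X=\emptyset$, are immediate, as is the degenerate pair $(X=I,\tau=-w_X)$ for which $I\setminus X=\emptyset$.

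Second, I would reformulate \eqref{eq:sigma-tau} using the compatibilities of $\sigma$, $\tau$ and $T_{w_X}$. The diagram automorphism $\tau$ acts on $\uqg$ as a Hopf algebra automorphism with $\tau(E_j)=E_{\tau(j)}$, $\tau(F_j)=F_{\tau(j)}$, $\tau(K_j)=K_{\tau(j)}$; from the characterization \eqref{eq:ri-def} this gives $\tau\circ r_j=r_{\tau(j)}\circ\tau$, and since $\tau$ preserves $X$ and hence fixes the longest element $w_X$ of $W_X$, one has $\tau\circ T_{w_X}=T_{w_X}\circ\tau$. Moreover $w_X=w_X^{-1}$, so iterating \eqref{eq:Ti-sigma} along a reduced expression yields $\sigma\circ T_{w_X}=T_{w_X}^{-1}\circ\sigma$. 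Combining these facts with \eqref{eq:sigma-ri} gives
\begin{align*}
  \sigma\circ\tau\bigl(r_i(T_{w_X}(E_i))\bigr)={}_{\tau(i)}r\bigl(T_{w_X}^{-1}(E_{\tau(i)})\bigr),
\end{align*}
so that \eqref{eq:sigma-tau} is equivalent to the identity
\begin{align*}
  r_i\bigl(T_{w_X}(E_i)\bigr)={}_{\tau(i)}r\bigl(T_{w_X}^{-1}(E_{\tau(i)})\bigr)\qquad\text{for all }i\in I\setminus X.
\end{align*}
Since $w_X\in W_X$ and $w_X(\alpha_i)$ is a positive root with $\alpha_i$-coefficient $1$, both sides have weight $\sum_{j\in X}m_j\alpha_j$ with $m_j\in\N_0$; in particular they lie in the subalgebra of $\uqg$ generated by $\{E_j\mid j\in X\}$, that is, inside $\uqgX$, which for the pairs in Araki's table has small rank.

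For a fixed admissible pair and a fixed relevant $i$, the remaining identity is then checked by an explicit computation. One picks a reduced expression $w_X=s_{j_1}\cdots s_{j_k}$ with $j_1,\dots,j_k\in X$, expands $T_{w_X}(E_i)=T_{j_1}\cdots T_{j_k}(E_i)$ as a $\field(q)$-linear combination of monomials in the $E_j$ using the explicit formulas for the $T_j$ in \cite[37.1]{b-Lusztig94}, and reads off $r_i(T_{w_X}(E_i))$ from \eqref{eq:rix-explicit}: as the coefficient of $\alpha_i$ in $w_X(\alpha_i)$ equals $1$, every monomial contains the letter $E_i$ exactly once, so $r_i$ simply deletes that letter and inserts the $q$-power coming from conjugating the trailing factors by $K_i$. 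One does the same for $T_{w_X}^{-1}(E_{\tau(i)})$ and ${}_{\tau(i)}r$ and compares the two resulting elements of $\uqgX$, invoking the quantum Serre relations among the $E_j$ ($j\in X$) where necessary; equivalently one may apply $\sigma\circ\tau$ to $r_i(T_{w_X}(E_i))$ directly, using that $\sigma$ reverses products and fixes the $E_j$ while $\tau$ sends $E_j$ to $E_{\tau(j)}$.

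The main obstacle is purely computational and is concentrated in the pairs for which $X$ is a longer connected chain — types AIII/AIV, DIII, and the exceptional pairs of types $E$ and $F$ — where $T_{w_X}(E_i)$ is a sum of many monomials and the bookkeeping of the $q_i$-powers is delicate. The reductions above are what make this feasible: the comparison always takes place inside the low-rank algebra $\uqgX$ and, for a fixed $i$, inside the single low-degree weight space $U^+_{w_X(\alpha_i)-\alpha_i}$; in the simplest pairs this space is one-dimensional and spanned by a power $E_j^m$, on which $\sigma\circ\tau$ acts trivially, so the identity is immediate, and only the chain cases genuinely require the full expansion.
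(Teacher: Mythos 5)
Your proposal is correct and follows essentially the same route as the paper's proof: a case-by-case verification over Araki's classification of admissible pairs, using explicit reduced expressions for $w_X$ together with the compatibilities \eqref{eq:sigma-ri} and \eqref{eq:Ti-sigma} of $\sigma$ with $r_i$ and with the braid group action. Your preliminary reductions (discarding nodes of $I\setminus X$ not adjacent to $X$, and rewriting the claim as $r_i(T_{w_X}(E_i))={}_{\tau(i)}r\bigl(T_{w_X}^{-1}(E_{\tau(i)})\bigr)$) are valid and merely reorganize the bookkeeping; as in the paper, what remains is the explicit computation in the longer chain cases, which you describe in outline but, like the paper for all but two representative types, do not carry out.
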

\begin{proof}
  If $X=\emptyset$ then both sides of Equation \eqref{eq:sigma-tau} are equal to $1$. If $X\neq \emptyset$ then the essential step in the proof is to find a suitable reduced expression for $w_X$. For fixed $i\in I\setminus X$ this reduced expression is of the form $w_X=w_X' w_X''$ where $w_X''\in W_X$ is of maximal length with $w_X''(\alpha_i)=\alpha_i$ and the lengths of the three elements are related by $l(w_X)=l(w_X')+l(w_X'')$. We prove Formula \eqref{eq:sigma-tau} for $\gfrak$ and $(X,\tau)$ of types $AIV$, $BII$, and $DII$ in Araki's table \cite[p.~32]{a-Araki62}. All other cases are treated similarly and are left to the reader.
  
\noindent  {\bf Type $AIV$:} Here $\gfrak=\slfrak_{n+1}(\C)$ and $I=\{1,2,\dots,n\}$. The admissible pair is given by $X=\{2,3,\dots,n-1\}$ and $\tau$ is the nontrivial diagram automorphism $\tau(i)=n+1-i$. There are only two nodes in $I\setminus X$, namely the nodes labeled by $1$ and $n$. Using the notation $[x,y]_p=xy-pyx$ and the reduced expression
\begin{align*}
  w_X=(s_{n-1}s_{n-2 }\dots s_2)(s_{n-1} s_{n-2} \dots s_3) \dots (s_{n-1} s_{n-2}) s_{n-1}
\end{align*}
one calculates 
\begin{align}
  r_1(T_{w_X}(E_1))&\stackrel{\phantom{\eqref{eq:rix-explicit}}}{=}r_1(T_{n-1} T_{n-2} \dots T_2 (E_1))\nonumber\\
          &\stackrel{\phantom{\eqref{eq:rix-explicit}}}{=}r_1(T_{n-1} T_{n-2} \dots T_3 ([E_2,E_1]_{q^{-1}}))\nonumber\\
          &\stackrel{\phantom{\eqref{eq:rix-explicit}}}{=}r_1([T_{n-1} \dots T_3(E_2),E_1]_{q^{-1}}))\nonumber\\  
          &\stackrel{\eqref{eq:rix-explicit}}{=}(1-q^{-2})T_{n-1} \dots T_3(E_2) \label{eq:r1twXE1}
\end{align}
For symmetry reasons one also has
\begin{align*}
  r_n(T_{w_X}(E_n))=\tau(r_1(T_{w_X}(E_1)))=(1-q^{-2}) T_2 T_3 \dots T_{n-2}(E_{n-1})
\end{align*}
Now define $w=s_{n-2}s_{n-3}\dots s_2 s_{n-1} s_{n-2}\dots s_3$ and observe that the right hand side  is a reduced expression and $w(\alpha_2)=\alpha_{n-1}$. Hence, using \cite[Proposition 8.20]{b-Jantzen96} and \eqref{eq:r1twXE1}, one obtains 
\begin{align}\label{eq:inbetween}
  r_1(T_{w_X}(E_1))&= (1-q^{-2}) T_2^{-1} T_3^{-1}\dots T_{n-2}^{-1}(E_{n-1}).      
\end{align}
Using Equations \eqref{eq:inbetween} and \eqref{eq:Ti-sigma} and the relation $\tau(w_X)=w_X$ one gets
\begin{align*}
  \sigma \circ \tau (r_n(T_{w_X}(E_n)))&= \sigma(r_1(T_{w_X}(E_1)))\\
     &=(1-q^{-2})\sigma(T_2^{-1} T_3^{-1}\dots T_{n-2}^{-1}(E_{n-1}))\\
     &=(1-q^{-2}) T_2 T_3 \dots T_{n-2}(E_{n-1})\\
     &=  r_n(T_{w_X}(E_n)). 
\end{align*}
The relation $\sigma \circ  \tau (r_1(T_{w_X}(E_1)))=  r_1(T_{w_X}(E_1))$ holds again for symmetry reasons.

\noindent  {\bf Type $BII$:} In this case $\gfrak$ is of type $B_n$ with $I=\{1,2,\dots,n\}$ where $\alpha_n$ denotes the short root. The admissible pair is given by $X=\{2,3,\dots,n\}$ and $\tau=\id$. To shorten notation, for any $1\le k <  n$, write
\begin{align}
  s_{k--n--k}=s_k s_{k+1} \dots s_{n-1} s_n s_{n-1} \dots s_{k+1} s_k,\label{eq:sknk}\\
  T_{k--n--k}=T_k T_{k+1} \dots T_{n-1} T_n T_{n-1} \dots T_{k+1} T_k.\nonumber
\end{align}
The longest element of $W_X$ is given by
\begin{align*}
  w_X= s_{2--n--2}\, s_{3--n--3} \dots s_{(n-1)--n--(n-1)}\, s_n.
\end{align*}
Moreover, the expression
\begin{align*}
  s_{3--n--3}\, s_{2--n--2} = s_{3--n--3}\,s_2\, s_{3--n--3} s_2
\end{align*}
is reduced. As $s_{3--n--3}\,s_2\, s_{3--n--3}(\alpha_2)=\alpha_2$ one obtains, using again \cite[Proposition 8.20]{b-Jantzen96}, that
\begin{align}\label{eq:h1}
   T_2 T_{3--n--3}(E_2)=T_{3--n--3}^{-1}(E_2)
\end{align}
Now one calculates
\begin{align}
  r_1(T_{w_X}(E_1))&\stackrel{\phantom{\eqref{eq:h1}}}{=} r_1(T_{2--n--2}(E_1)) \label{eq:calc-BII-1} \\
      &\stackrel{\phantom{\eqref{eq:h1}}}{=} r_1(T_2 T_{3--n--3}[E_2,E_1]_{q^{-2}})\nonumber\\
      &\stackrel{\phantom{\eqref{eq:h1}}}{=}r_1[T_2 T_{3--n--3}(E_2), T_2(E_1)]_{q^{-2}}\nonumber\\
      &\stackrel{\eqref{eq:h1}}{=}r_1[T_{3--n--3}^{-1}(E_2), [E_2,E_1]_{q^{-2}}]_{q^{-2}}\nonumber\\
      &\stackrel{\eqref{eq:rix-explicit}}{=}(1-q^{-4})[T_{3--n--3}^{-1}(E_2), E_2]_{q^{-4}}.\nonumber
\end{align}
Hence Equation \eqref{eq:Ti-sigma} implies 
\begin{align}
  \sigma(r_1(T_{w_X}(E_1)))&=T_{3--n--3}(r_1(T_{w_X}(E_1)))\label{eq:calc-BII-2}\\      
     &=r_1(T_{3--n--3}T_{w_X}(E_1))=r_1(T_{w_X}(E_1))\nonumber
\end{align}
which concludes the proof in the case $BII$.

\noindent  {\bf Type $DII$:} The proof of the proposition in the case $DII$ is very similar to the proof given in the case $BII$ above. Indeed, let $\gfrak$ be of type $D_n$ with $X=\{2,3,\dots,n\}$ where the simple roots a labeled as in \cite[4.8]{b-Kac1}. For any $k=1,\dots,n-2$ define
\begin{align*}
  s'_{k--n--k}&=s_k s_{k+1} \dots s_{n-2} s_{n-1} s_n s_{n-2}\dots s_{k+1} s_k,\\
  T'_{k--n--k}&=T_k T_{k+1} \dots T_{n-2} T_{n-1} T_n T_{n-2} \dots T_{k+1} T_k.\nonumber
\end{align*}
The longest element in $W_X$ is given by
\begin{align*}
  w_X=s'_{2--n--2}s'_{3--n--3}\dots s'_{(n-2)--n--(n-2)} s_{n-1} s_n
\end{align*}
Moreover, the expression 
\begin{align*}
  s'_{3--n--3}s'_{2--n--2} = s'_{3--n--3} s_2 s'_{3--n--3} s_2
\end{align*}
is reduced and $s'_{3--n--3} s_2 s'_{3--n--3}(\alpha_2)=\alpha_2$ which implies that
\begin{align*}
  T_2 T'_{3--n--3}(E_2)= (T'_{3--n--3})^{-1}(E_2).
\end{align*}
Now one calculates as in \eqref{eq:calc-BII-1} and \eqref{eq:calc-BII-2}.
\end{proof}
\subsection{Local finiteness and $r_i(T_{w_X}(E_i))$}\label{sec:loc-fin-ri}
For any subset $X\subset I$ of finite type let $\cM_X=U_q(\gfrak_X)$ denote the subalgebra of $\uqg$ generated by the elements $E_i$, $F_i$, $K_i^{\pm 1}$ for all $i\in X$. Correspondingly, let $\cM_X^+$ and $\cM_X^-$ denote the subalgebras of $\cM_X$ generated by the elements in the sets $\{E_i\,|\,i \in X\}$ and  $\{F_i\,|\,i \in X\}$, respectively.

Avoiding casework, we will show in this subsection that Proposition \ref{prop:finite-st} holds up to a sign even if we drop the assumption that $\gfrak$ is of finite type.  To achieve this we will make use of A.~Joseph and G.~Letzter's theory of the locally finite part applied to $\cM_X$. Using Sweedler notation in the form $\kow(u)=u_{(1)}\ot u_{(2)}$ for $u\in \uqg$, the left adjoint action of $u$ on $x\in \uqg$ is given by 
\begin{align*}
  \ad(u)(x)=u_{(1)}x S(u_{(2)}).
\end{align*}
One has in particular
\begin{align}\label{eq:left-adjointE}
  \ad(E_i)(x)=E_i x - K_i x K_i^{-1} E_i = K_i(K_i^{-1} E_i x - x K_i^{-1} E_i).  
\end{align}
For any $x\in \uqg$ the space $\ad(\cM_X)(x)$ is an $\cM_X$-module under the left adjoint action. 
Let $P_X^+$ denote the set of dominant integral weights for the semisimple Lie algebra $\gfrak_X$.  For each $\lambda\in P_X^+$ let $V_X(\lambda)$ denote the irreducible highest weight left $\cM_X$-module of weight $\lambda$ and let $V_X(\lambda)^\ast$ denote its dual left $\cM_X$-module. Up to translation into our present setting, the following result is contained in \cite{a-JoLet2}, \cite{a-Caldero93}. See \cite[Section 1.3 and Proposition 4.12]{a-KolbStok09} for details and for further references.
  \begin{prop}\label{prop:loc-fin}
    Let $\lambda\in P_X^+$ and assume that $K\in U^0$ satisfies $K E_j K^{-1}= q^{-(\lambda,\alpha_j)} E_j$ for all $j\in X$. 
  \begin{enumerate}
    \item The left $\cM_X$-module $\ad(\cM_X)(K^2)$ is isomorphic to $V_X(\lambda)\otimes V_X(\lambda)^\ast$. In particular, it is finite dimensional.
    \item The space
      \begin{align*}
         \{x\in U^+_{\lambda-w_X(\lambda)}K^2\,|\,\ad(E_j)(x)=0 \mbox{ for all $j\in X$}\}
      \end{align*}     
      is one-dimensional and is contained in $\ad(\cM_X)(K^2)$.
   \end{enumerate}
\end{prop}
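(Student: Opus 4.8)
The plan is to recognize Proposition \ref{prop:loc-fin} as an instance of Joseph–Letzter's description of the locally finite part $F(\uqg)$, restricted to the reductive Hopf subalgebra $\cM_X$. First I would observe that, since $X$ is of finite type, $\cM_X\cong U_q(\gfrak_X)$ is a quantized enveloping algebra of a semisimple Lie algebra (tensored with a central torus coming from $U^0$), so the theory of \cite{a-JoLet2} applies to it verbatim. The key structural input is that for $K\in U^0$ with $K E_j K^{-1}=q^{-(\lambda,\alpha_j)}E_j$ for all $j\in X$, the element $K^2$ plays the role of the group-like element attached to the $\cM_X$-weight $2\lambda$: a direct computation using \eqref{eq:left-adjointE} shows that $K^2$ generates, under $\ad(\cM_X)$, a finite-dimensional module, because $K^2$ has the appropriate commutation relations with the $E_j$, $F_j$ for $j\in X$ to lie in the locally finite part of $\cM_X$ relative to the adjoint action.

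For part (1), I would invoke the Joseph–Letzter (and Caldero) identification of the isotypic components of the locally finite part: the submodule generated by the group-like element corresponding to $2\lambda$ is isomorphic to $\End(V_X(\lambda))\cong V_X(\lambda)\ot V_X(\lambda)^\ast$ as a left $\cM_X$-module. Concretely, one maps $\ad(\cM_X)(K^2)$ into $V_X(\lambda)\ot V_X(\lambda)^\ast$ by the standard matrix-coefficient-type construction; injectivity and surjectivity follow from the cited results, once one checks the highest-weight vector of $\ad(\cM_X)(K^2)$ has the correct weight, namely $2\lambda$ translated appropriately, equivalently that $K^2\in U^+_0 K^2\subset \uqg$ is a highest weight vector for $\ad(\cM_X^+)$ up to adding the relevant $U^+_{>0}$ terms. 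Finite-dimensionality is then immediate since $V_X(\lambda)$ is finite-dimensional.

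For part (2), the space in question is exactly the space of $\ad(\cM_X^+)$-highest weight vectors of weight $\lambda - w_X(\lambda)$ inside $\uqg$; I would argue that any such vector automatically lies in $\ad(\cM_X)(K^2)$. One direction: by part (1), $\ad(\cM_X)(K^2)\cong V_X(\lambda)\ot V_X(\lambda)^\ast$ contains a one-dimensional space of highest weight vectors, and the lowest weight of $V_X(\lambda)^\ast$ is $-w_X(\lambda)$ (note $V_X(\lambda)^\ast\cong V_X(-w_X(\lambda))$ as $\gfrak_X$-modules, here with the convention that highest weights are taken with respect to $\Phi_X^+$), so the unique highest weight vector has $U^+$-degree $\lambda - w_X(\lambda)$, matching the claimed weight. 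Conversely, to see there is nothing else, I would use that $U^+_{\lambda-w_X(\lambda)} K^2$ is finite-dimensional and that the locally finite part exhausts the relevant degrees; more carefully, any $x$ in the displayed space generates under $\ad(\cM_X)$ a finite-dimensional module with a single highest weight line, and a weight/dimension count forces it to coincide with the copy inside $\ad(\cM_X)(K^2)$. The main obstacle I anticipate is the bookkeeping around weights and group-like elements: matching the $U^0$-part $K^2$ with the $\cM_X$-weight $2\lambda$, getting the identification $V_X(\lambda)^\ast\cong V_X(-w_X(\lambda))$ with the correct extremal weights, and verifying that no spurious highest weight vectors of weight $\lambda-w_X(\lambda)$ appear outside $\ad(\cM_X)(K^2)$ — these are precisely the points where one must be careful to cite \cite{a-JoLet2}, \cite{a-Caldero93}, and \cite[Section 1.3 and Proposition 4.12]{a-KolbStok09} rather than reprove the locally finite part from scratch.
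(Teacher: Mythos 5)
Your proposal matches the paper's treatment: the paper gives no independent proof of Proposition \ref{prop:loc-fin} but obtains it, exactly as you do, from the Joseph--Letzter/Caldero theory of the locally finite part, citing \cite{a-JoLet2}, \cite{a-Caldero93} and \cite[Section 1.3 and Proposition 4.12]{a-KolbStok09} both for the identification of $\ad(\cM_X)(K^2)$ with $V_X(\lambda)\otimes V_X(\lambda)^\ast$ and for the uniqueness of the highest weight line. The only point where your sketch is thinner than what those references supply is the \emph{converse} step of part (2) --- that an arbitrary $\ad(E_j)$-invariant in $U^+_{\lambda-w_X(\lambda)}K^2$ is automatically locally finite, so that the "weight/dimension count" applies --- but you correctly flag this as one of the facts to be imported from the cited sources rather than reproved.
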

To apply Proposition \ref{prop:loc-fin} to our setting recall from \cite[(4.4)]{a-Kolb12p} that for every $i\in I\setminus X$ one has $T_{w_X}(E_i)=a_i \ad(Z_i^+)(E_i)$ for some $a_i\in \field(q)$ and some monomial $Z_i^+\in U^+_{w_X(\alpha_i)-\alpha_i}$. By \eqref{eq:ri-Fcomm} this implies that
\begin{align}\label{eq:ri-ad}
  r_i(T_{w_X}(E_i)) = a_i \ad(Z_i^+)(K_i^2) K_i^{-2}.
\end{align}
It now follows from Proposition \ref{prop:loc-fin} that  $r_i(T_{w_X}(E_i))K_i^2$ spans the unique one-dimensional subspace of highest weight vectors of weight $w_X(\alpha_i)-\alpha_i$ inside $\cM_X^+ K_{i}^2$. This allows us to verify that Proposition \ref{prop:finite-st} holds up to a sign also for $\gfrak$ of infinite type.
\begin{prop}\label{prop:sigma-tau-ri}
  Let $(X,\tau)$ be an admissible pair for the symmetrizable Kac-Moody algebra $\gfrak$. For every $i\in I\setminus X$ there exists $\nu_i\in \{-1,1\}$ such that
  \begin{align*}
    \sigma\circ \tau(r_i(T_{w_X}(E_i)))=\nu_i r_{i}(T_{w_X}(E_{i})).
  \end{align*} 
  Moreover, $\nu_i=\nu_{\tau(i)}$ for all $i\in I\setminus X$.
\end{prop}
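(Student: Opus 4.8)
The plan is to exploit the one-dimensionality statement from Proposition \ref{prop:loc-fin} together with the structural identity \eqref{eq:ri-ad}. First I would show that the element $\sigma\circ\tau\big(r_i(T_{w_X}(E_i))\big)$ lives in the same one-dimensional space as $r_{i}(T_{w_X}(E_i))$, which immediately forces the two to be proportional by some scalar, and then argue that the scalar is $\pm 1$. For the first part, recall from the discussion preceding the proposition that $r_i(T_{w_X}(E_i))K_i^2$ spans the unique line of $\ad(E_j)$-highest weight vectors of weight $w_X(\alpha_i)-\alpha_i$ inside $\cM_X^+ K_i^2$. So I need to check three things about $\sigma\circ\tau\big(r_i(T_{w_X}(E_i))\big)$: that (after multiplying by $K_i^2$ — here one uses that $\sigma$ and $\tau$ fix $K_i^2$ appropriately, noting $\sigma(K_i)=K_i^{-1}$, so a little care with which side $K_i^2$ sits on is needed) it again lies in $\cM_X^+ K_i^2$, that it has weight $w_X(\alpha_i)-\alpha_i$, and that it is killed by all $\ad(E_j)$, $j\in X$. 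The weight bookkeeping is routine: $\tau$ permutes $X$ and fixes the combinatorial data, and $\sigma$ negates weights, but $w_X(\alpha_i)-\alpha_i$ is already $\tau$- and $\sigma$-stable up to the $W_X$-action in a way that matches because $\tau w_X = w_X \tau$ and $\tau$ acts on $X$ as $-w_X$.

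The more delicate points are that $\sigma\circ\tau$ preserves $\cM_X^+$ (up to the $K_i^2$ factor) and that it intertwines $\ad(E_j)$ suitably. For the first, $\tau$ clearly preserves $\cM_X^+$ since it permutes $\{E_j : j\in X\}$, and $\sigma$ sends $\cM_X^+$ to itself as a vector space because $\sigma(E_j)=E_j$ and $\sigma$ is an antiautomorphism (so it reverses monomials but stays inside $\cM_X^+$); one then tracks the $K_i^{\pm2}$ correction carefully using the commutation relations, which is where I expect the genuine fiddliness. For the $\ad(E_j)$-invariance, I would use \eqref{eq:left-adjointE} together with \eqref{eq:sigma-ri}, \eqref{eq:Ti-sigma}, and the fact that $\tau$ is an algebra automorphism commuting with $T_{w_X}$ in the appropriate sense (since $\tau(X)=X$ so $\tau w_X\tau^{-1}=w_X$); alternatively, and perhaps more cleanly, I would note that being an $\ad(E_j)$-highest-weight vector is equivalent to a statement about $r_j$ via \eqref{eq:ri-ad}-type reasoning, and that $\sigma\circ\tau$ transports the condition "$r_j(\cdot)=0$ for all $j\in X$" to the same condition, using $\sigma\circ r_j = {}_jr\circ\sigma$ and symmetry of the defining expansions \eqref{eq:ri-def}, \eqref{eq:ir-def}. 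Either way, once the target is pinned down to the same one-dimensional space, proportionality is automatic.

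Having established $\sigma\circ\tau\big(r_i(T_{w_X}(E_i))\big)=\nu_i\, r_{i}(T_{w_X}(E_i))$ for a unique scalar $\nu_i\in\field(q)^\times$, I would prove $\nu_i\in\{\pm 1\}$ by applying $\sigma\circ\tau$ twice: since $\sigma$ is an antiautomorphism with $\sigma^2=\id$, $\tau^2=\id_I$, and $\sigma$ and $\tau$ commute, one gets $(\sigma\circ\tau)^2=\id$ on the relevant elements, so $\nu_i\,\nu_{\tau(i)}=1$ — and a parallel computation with the roles of $i$ and $\tau(i)$ swapped gives $\nu_{\tau(i)}\,\nu_i=1$ as well, while symmetry of the whole setup under $\tau$ (applied to the defining relation for index $\tau(i)$) yields $\nu_i=\nu_{\tau(i)}$. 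Combining $\nu_i=\nu_{\tau(i)}$ with $\nu_i\nu_{\tau(i)}=1$ forces $\nu_i^2=1$, hence $\nu_i=\pm1$, and the equality $\nu_i=\nu_{\tau(i)}$ is exactly the last assertion. The main obstacle I anticipate is not the scalar argument but the careful handling of the $K_i^{\pm2}$ factors when transporting the highest-weight characterization through the antiautomorphism $\sigma$ — getting the weights and the side on which the torus element sits to line up so that Proposition \ref{prop:loc-fin}(2) applies verbatim to $\tau(i)$.
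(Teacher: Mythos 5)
Your proposal is correct and follows essentially the same route as the paper: transport the $\ad(E_j)$-highest-weight characterization of $r_i(T_{w_X}(E_i))K_i^2$ through $\sigma\circ\tau$ (with the weight identity $w_X(\alpha_{\tau(i)})-\alpha_{\tau(i)}=w_X(\alpha_i)-\alpha_i$ coming from admissibility), invoke the one-dimensionality in Proposition \ref{prop:loc-fin}(2) for proportionality, and use $(\sigma\circ\tau)^2=\id$ together with an application of $\tau$ for the sign statements. The only slip is that $(\sigma\circ\tau)^2=\id$ applied to the defining relation yields $\nu_i^2=1$ directly rather than $\nu_i\nu_{\tau(i)}=1$, but since you separately derive $\nu_i=\nu_{\tau(i)}$ the conclusion is unaffected.
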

\begin{proof}
By \eqref{eq:left-adjointE} the fact that $r_i(T_{w_X}(E_i))K_i^2$ is a highest weight vector for the left adjoint action of $\cM_X$ is equivalent to
\begin{align*}
   K_j^{-1} E_j\, r_i(T_{w_X}(E_i)) K_i^{2}  - r_i(T_{w_X}(E_i)) K_i^{2} K_j^{-1} E_j =0 \qquad \mbox{for all $j\in X$.}
\end{align*} 
This can be rewritten as
\begin{align*}
   E_j\, r_i(T_{w_X}(E_i))  - q^{(\alpha_j , \alpha_i+w_X\alpha_i)} r_i(T_{w_X}(E_i))  E_j =0 \qquad \mbox{for all $j\in X$.}
\end{align*}
 Using the Weyl group invariance of the bilinear form and the fact that $w_X(\alpha_j)=-\alpha_{\tau(j)}$ for all $j\in X$, one can simplify the $q$-exponent and one obtains
\begin{align*}
  E_j\, r_i(T_{w_X}(E_i))  - q^{(\alpha_j , \alpha_i - \alpha_{\tau(i)})} r_i(T_{w_X}(E_i))  E_j =0 \qquad \mbox{for all $j\in X$.}
\end{align*}
Applying $\sigma\circ \tau$ to the above equation one obtains
\begin{align*}
    \sigma\circ \tau\big(r_i(T_{w_X}(E_i))\big)\,E_{\tau(j)}  - q^{(\alpha_j , \alpha_i - \alpha_{\tau(i)})} E_{\tau(j)}\sigma\circ \tau\big(r_i(T_{w_X}(E_i))\big)  =0
\end{align*}
and hence
\begin{align*}
   E_j \sigma\circ \tau(r_i(T_{w_X}(E_i)))  - q^{(\alpha_j ,\alpha_i-\alpha_{\tau(i)})} \sigma\circ \tau(r_i(T_{w_X}(E_i)))  E_j =0 \qquad \mbox{for all $j\in X$.}
\end{align*}
Translating this back, we obtain that
\begin{align*}
  \sigma\circ\tau(r_i(T_{w_X}E_i)) K_i^2 \in \cM_X^+ K_i^2
\end{align*}
is a highest weight vector for the adjoint action of $\cM_X$ on $\cM_XU^0$ of weight $w_X(\alpha_i)-\alpha_i$. By Proposition \ref{prop:loc-fin}.(2) and the subsequent remark this implies that
\begin{align}\label{eq:PinZ}
   \sigma\circ\tau(r_i(T_{w_X}E_i))=\nu_i r_{i}(T_{w_X}E_{i})
\end{align}
for some $\nu_i\in \field(q)$. As $(\sigma\circ \tau)^2$ is the identity map, one obtains $\nu_i^2= 1$ which implies that $\nu_i\in \{-1,1\}$. The relation $\nu_i=\nu_{\tau(i)}$ is obtained by application of $\tau$ to both sides of \eqref{eq:PinZ}. 
\end{proof}
\begin{rema}
  The proof of Proposition \ref{prop:sigma-tau-ri} only uses Conditions (1) and (2) from Definition \ref{def:admissible}. The proposition hence holds for a wider class of pairs $(X,\tau)$. Here we restrict to admissible pairs because we are interested in applications to quantum symmetric pairs. 
\end{rema}
In view of Proposition \ref{prop:finite-st} it is to be expected that the coefficients $\nu_i$ in Proposition \ref{prop:sigma-tau-ri} are always equal to $1$. We have checked this for various examples where $\gfrak$ is not of finite type and it always turned out to hold true. However, a general argument is still evasive. 
\begin{conj}\label{conj:sigma-tau-ri}
  Let $(X,\tau)$ be an admissible pair for the symmetrizable Kac-Moody algebra $\gfrak$. The signs $\nu_i$ in Proposition \ref{prop:sigma-tau-ri} satisfy $\nu_i=1$ for all $i\in I\setminus X$. 
\end{conj}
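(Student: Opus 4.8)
The plan is to pin down the sign $\nu_i$ by producing a distinguished generator of the one–dimensional space of highest weight vectors of weight $\beta_i:=w_X(\alpha_i)-\alpha_i$ inside $\cM_X^+$ which is visibly fixed by $\sigma\circ\tau$; that forces $\nu_i=1$. First I would collect the available symmetries. The diagram automorphism $\tau$ is a Hopf automorphism satisfying $\tau\circ r_j=r_{\tau(j)}\circ\tau$ and $\tau\circ T_{w_X}=T_{w_X}\circ\tau$, while \eqref{eq:sigma-ri} and \eqref{eq:Ti-sigma} give $\sigma\circ r_j={}_jr\circ\sigma$ and $\sigma\circ T_{w_X}=T_{w_X}^{-1}\circ\sigma$ (using $w_X^{-1}=w_X$); combining these,
\begin{align*}
  \sigma\circ\tau\bigl(r_i(T_{w_X}(E_i))\bigr)={}_{\tau(i)}r\bigl(T_{w_X}^{-1}(E_{\tau(i)})\bigr),
\end{align*}
which Proposition \ref{prop:sigma-tau-ri} identifies with $\nu_i\,r_i(T_{w_X}(E_i))$. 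Moreover both $\sigma$ and $\tau$ are $\field(q)$–linear, commute with the bar involution of $\cM_X^+$, and preserve Lusztig's $\Z[q,q^{-1}]$–integral form, hence stabilise the canonical basis $\mathbf{B}_X$ of $\cM_X^+$ (for non–simply–laced $\gfrak_X$ the positivity of $\mathbf{B}_X$ that we shall need is obtained by folding from the simply-laced case).

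The crux is the following positivity statement: there exist $f_i\in\field(q)^\times$ and coefficients $c_b\in\N_0[q,q^{-1}]$ with $f_i^{-1}\,r_i(T_{w_X}(E_i))=\sum_{b\in\mathbf{B}_X}c_b\,b$. Granting it, the argument concludes at once: applying $\sigma\circ\tau$, which permutes $\mathbf{B}_X$ and fixes $\field(q)$, yields $\sum_b c_b\,(\sigma\circ\tau)(b)$, again an $\N_0[q,q^{-1}]$–combination of canonical basis elements; but this equals $\nu_i\,f_i^{-1}r_i(T_{w_X}(E_i))=\nu_i\sum_b c_b\,b$, and a non-zero element of $\cM_X^+$ cannot have all of its $\mathbf{B}_X$–coordinates in $\N_0[q,q^{-1}]$ and simultaneously all of them in $-\N_0[q,q^{-1}]$. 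Hence $\nu_i=1$. Note that the bare element $T_{w_X}(E_i)$ is \emph{not} itself positive up to a scalar (already $T_{s_2}(E_1)=E_2E_1-qE_1E_2$ is not), so the positivity must be extracted only after applying $r_i$.

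To prove the positivity I would use Proposition \ref{prop:loc-fin}: $r_i(T_{w_X}(E_i))K_i^2$ lies in $\ad(\cM_X)(K_i^2)\cong V_X(\lambda)\ot V_X(\lambda)^\ast$ and is the highest weight vector of the Cartan component $V_X(\lambda-w_X(\lambda))$. In the based-module calculus of \cite[Ch.~27]{b-Lusztig94} the finite-dimensional module $V_X(\lambda)\ot V_X(\lambda)^\ast$ carries a canonical basis whose extremal (highest weight) vector is a canonical basis vector up to a unit of $\Z[q,q^{-1}]$; one then wants the isomorphism $\ad(\cM_X)(K_i^2)\cong V_X(\lambda)\ot V_X(\lambda)^\ast$ to be compatible with $\mathbf{B}_X$ and with the total positivity of its structure constants, so that tracing through the identifications expresses $r_i(T_{w_X}(E_i))$ as a unit of $\Z[q,q^{-1}]$ times $f_i$ times a positive combination of $\mathbf{B}_X$. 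Making exactly this last step rigorous — showing that the ``canonical basis of the locally finite part'' behaves well under the adjoint action — is where I expect the main difficulty to lie. An alternative, more hands-on route is to divide $r_i(T_{w_X}(E_i))$ by its leading power of $(q-1)$ and pass to the classical limit $q\to 1$, reducing the claim to the statement that the corresponding highest weight vector in $U(\nfrak_X^+)$ is fixed by the classical analogues of $\sigma$ and $\tau$, which can be analysed via a compact real form of $\gfrak_X$; either way, the indispensable new input is the sign-breaking ingredient which Proposition \ref{prop:finite-st} supplies by explicit computation in finite type but which the bilinear form on $\cM_X^+$, being symmetric, cannot detect.
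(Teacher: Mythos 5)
First, a point of comparison: the statement you are proving is precisely what the paper leaves open. It appears as Conjecture \ref{conj:sigma-tau-ri}, the authors explicitly write that ``a general argument is still evasive'', and the only proof in the paper is the case-by-case verification in finite type (Proposition \ref{prop:finite-st}). So there is no proof of the paper's to match your argument against; the only question is whether your argument actually closes the gap. It does not, because the positivity claim on which everything rests is false.

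Concretely, take $(X,\tau)$ of type $AIV$ for $\gfrak=\slfrak_5(\C)$, so $I=\{1,2,3,4\}$, $X=\{2,3\}$, $\tau(i)=5-i$. The paper's own computation \eqref{eq:r1twXE1} gives
\begin{align*}
  r_1(T_{w_X}(E_1))=(1-q^{-2})\,T_3(E_2)=(1-q^{-2})\bigl(E_3E_2-q^{-1}E_2E_3\bigr).
\end{align*}
The weight space $(\cM_X^+)_{\alpha_2+\alpha_3}$ is two-dimensional and its canonical basis is $\{E_2E_3,\,E_3E_2\}$ (Lusztig's explicit description in type $A_2$). The two $\mathbf{B}_X$-coordinates of $f^{-1}r_1(T_{w_X}(E_1))$ are therefore $f^{-1}(1-q^{-2})$ and $-q^{-1}f^{-1}(1-q^{-2})$, which differ by the factor $-q^{-1}$; evaluating at $q=1$ shows that no choice of $f\in\field(q)^\times$ can place both in $\N_0[q,q^{-1}]$. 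So the highest weight vector of the Cartan component is in general \emph{not} positive with respect to $\mathbf{B}_X$ even after applying $r_i$, and the contradiction you want to extract from $\nu_i=-1$ never gets off the ground. (In this example $\nu_1=1$ for a different reason: $\sigma\circ\tau$ sends $E_2E_3\mapsto\sigma(E_3E_2)=E_2E_3$ and likewise fixes $E_3E_2$, so it is the identity on the whole weight space.) Note also that the weaker true statement — that $\sigma$ and $\tau$ each permute the canonical basis, hence so does $\sigma\circ\tau$ — cannot by itself force $\nu_i=1$: an involution permuting a basis has plenty of $(-1)$-eigenvectors, e.g.\ $b-\sigma\tau(b)$. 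Your alternative suggestion of normalizing and passing to the classical limit $q\to1$ (in the example above the normalized limit is the root vector $[e_3,e_2]$, which one checks is fixed by the classical $\sigma\circ\tau$) looks more promising, but it requires showing that the normalized highest weight vector has a nonzero classical limit and then computing the classical sign in general; as written, neither route constitutes a proof, and the conjecture remains open.
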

\begin{rema}
  The main results of this paper, in particular Theorem \ref{thm:bar-involution}, are formulated independently of the signs $\nu_i$ for $i\in I\setminus X$. The relevance of Conjecture \ref{conj:sigma-tau-ri} will become apparent in Corollary \ref{cor:nu=1} and the subsequent Remarks \ref{rem:q-exponent-even} and \ref{rem:nu-1}. If $\nu_i=1$ for all $i\in I \setminus X$ then a bar involution for quantum symmetric pair coideal subalgebras $B_{\bc,\bs}$ exists for parameters $\bc=(c_i)_{i\in I\setminus X}$ where all $c_i$ are powers of $q$, see Remark \ref{rem:q-exponent-even}. This is desirable in view of the specialization properties of $B_{\bc, \bs}$, see \cite[Section 10]{a-Kolb12p}. If $\nu_i=-1$ for some $i\in I\setminus X$ then one can still choose parameters $\bc=(c_i)_{i\in I\setminus X}$ such that a bar involution for $B_{\bc,\bs}$ exists, but now some of the parameters specialize to $0$, see Remark \ref{rem:nu-1}. This does not lead to the desired specialization of $B_{\bc,\bs}$ at $q=1$. The conjecture expresses our belief that this situation will never occur. 
\end{rema}
\subsection{The bar involution for $\uqg$}
With the preparation provided by the previous subsection it is possible to describe the behavior of the elements $r_i(T_{w_X}(E_i))$ under the usual bar involution for $\uqg$. Recall that the bar involution for $\uqg$ is the $\field$-algebra automorphism $\overline{\phantom{bl}}:\uqg\rightarrow \uqg$, $u\mapsto \overline{u}$, defined by
\begin{align*}
  \overline{q}&=q^{-1}, & \overline{E_i}&=E_i, & \overline{F_i}&=F_i,& \overline{K_\beta}&=K_\beta^{-1}.
\end{align*}
By \cite[Lemma 1.2.14]{b-Lusztig94} the bar involution intertwines the skew derivations $r_i$ and ${}_i r$ up to a factor
\begin{align}\label{eq:bar-ri}
  \overline{r_i(u)} = q^{(\alpha_i,\alpha_i-\beta)} {}_i r(\overline{u})\quad \mbox{for all $i\in I$,  $u\in U^+_\beta$}.
\end{align}
Moreover, the bar involution is also compatible with the four braid group actions $T'_{i,1}$, $T''_{i,1}$, $T'_{i,-1}$, $T''_{i,-1}$ defined in \cite[37.1]{b-Lusztig94} in the following sense
\begin{align}\label{eq:bar-Ti}
  \overline{T''_{i,e}(u)}=T''_{i,-e}(\overline{u}),\quad
  \overline{T'_{i,e}(u)}=T'_{i,-e}(\overline{u}),\quad \mbox{for all $i\in I$,  $u\in \uqg$, $e\in \{\pm 1\}$.}
\end{align}
Recall that in the conventions of the present paper we have $T_i=T''_{i,1}$ and $T^{-1}_i = T'_{i,-1}$. The relation between $T''_{i,e}$ and $T'_{i,e}$ is given by 
\begin{align}\label{eq:T21}
  T''_{i,e}(u) = (-1)^n q_i^{en} T'_{i,e}(u)
\end{align}
for all $u\in \uqg$ with $K_i u K_i^{-1}=q_i^n u$ \cite[37.2.4]{b-Lusztig94}. In the proof of the following Lemma we will make use of all four braid group actions. If $w=s_{i_1}\dots s_{i_k}$ is a reduced expression for $w\in W$ then we will write $T'_{w,e}=T'_{i_1,e}\dots T'_{i_k,e}$ and similarly $T''_{w,e}=T''_{i_1,e}\dots T''_{i_k,e}$. Recall that $\rho_X$  and  $\rho_X^\vee$ denote the half sums of positive roots and of positive coroots of $\Phi_X$, respectively.
\begin{lem}
  Let $(X,\tau)$ be an admissible pair for $\gfrak$. For any $i\in I\setminus X$ one has
  \begin{align*}
    \overline{r_i(T_{w_X}(E_i))} = (-1)^{2\alpha_i(\rho_X^\vee)} q^{(\alpha_i,\,\alpha_i-w_X(\alpha_i)-2\rho_X)} \sigma\circ \tau(r_{\tau(i)}(T_{w_X}(E_{\tau(i)}))).
  \end{align*}
\end{lem}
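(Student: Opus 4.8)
The plan is to compute $\overline{r_i(T_{w_X}(E_i))}$ directly by combining the two compatibility formulas already recorded in the excerpt: the interaction \eqref{eq:bar-ri} of the bar involution with the skew derivation $r_i$, and the interaction \eqref{eq:bar-Ti} of the bar involution with Lusztig's braid group operators, together with the change-of-normalization identity \eqref{eq:T21}. The starting observation is that $r_i(T_{w_X}(E_i))$ lies in $U^+_{w_X(\alpha_i)-\alpha_i+\alpha_i}=U^+_{w_X(\alpha_i)}$; more precisely, by the first line of the display after \eqref{eq:Fij-def}, $T_{w_X}(E_i)\in U^+_{w_X(\alpha_i)}$, so $r_i(T_{w_X}(E_i))\in U^+_{w_X(\alpha_i)-\alpha_i}$. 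Hence \eqref{eq:bar-ri} with $\beta=w_X(\alpha_i)$ gives
\begin{align*}
  \overline{r_i(T_{w_X}(E_i))} = q^{(\alpha_i,\,\alpha_i-w_X(\alpha_i))}\, {}_i r\big(\overline{T_{w_X}(E_i)}\big).
\end{align*}

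Next I would evaluate $\overline{T_{w_X}(E_i)}$. Writing $T_{w_X}=T''_{w_X,1}$ in the reduced-word notation of the excerpt and applying \eqref{eq:bar-Ti} letter by letter gives $\overline{T''_{w_X,1}(E_i)}=T''_{w_X,-1}(\overline{E_i})=T''_{w_X,-1}(E_i)$. To relate $T''_{w_X,-1}$ back to $T''_{w_X,1}$ I would pass through the primed operators: by \eqref{eq:T21} each $T''_{i_k,-1}$ differs from $T'_{i_k,-1}$ by a scalar $(-1)^{n_k}q_{i_k}^{-n_k}$ depending on the $K_{i_k}$-weight of the element it is applied to, and $T'_{w_X,-1}=(T''_{w_X,1})^{-1}=T_{w_X}^{-1}$ in our conventions (this is the identity $T_i^{-1}=T'_{i,-1}$ recalled just before the Lemma, extended to $w_X$). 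Telescoping the scalars along the reduced word for $w_X$ collapses, by the standard bookkeeping for Lusztig's operators, to a single factor of the form $(-1)^{\langle \alpha_i,\, 2\rho_X^\vee\rangle} q^{-(\alpha_i,\,2\rho_X)}$ — this is exactly where the $\rho_X$ and $\rho_X^\vee$ in the statement enter, via $\sum_k (\text{positive roots of }\Phi_X \text{ summed}) = 2\rho_X$ and the coroot analogue. The upshot is $\overline{T_{w_X}(E_i)} = (-1)^{2\alpha_i(\rho_X^\vee)} q^{-(\alpha_i,2\rho_X)}\, T_{w_X}^{-1}(E_i)$ (care is needed here because $T_{w_X}^{-1}(E_i)$ is not $T_{w_X}(E_i)$; one must use that $w_X$ maps $\alpha_i$, $i\in I\setminus X$, appropriately and that $\tau=-w_X$ on $X$ controls the relevant weight).

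Now I would feed this back in and handle the ${}_i r$. The identity \eqref{eq:sigma-ri}, $\sigma\circ r_i = {}_i r\circ \sigma$, lets me replace ${}_i r$ by $\sigma\circ r_i\circ\sigma$, and \eqref{eq:Ti-sigma}, $T_i\circ\sigma=\sigma\circ T_i^{-1}$ (hence $\sigma\circ T_{w_X}^{-1}=T_{w_X}\circ\sigma$ and $\sigma(E_i)=E_i$), lets me convert $\sigma(T_{w_X}^{-1}(E_i))$ into $T_{w_X}(E_i)$. Thus
\begin{align*}
  {}_i r\big(\overline{T_{w_X}(E_i)}\big)
    = (-1)^{2\alpha_i(\rho_X^\vee)} q^{-(\alpha_i,2\rho_X)}\,
      \sigma\Big(r_i\big(T_{w_X}(E_i)\big)\Big).
\end{align*}
Finally I would invoke Proposition~\ref{prop:finite-st} in the finite-type case, or the general statement implicit in the proof of Proposition~\ref{prop:sigma-tau-ri} (namely that $\sigma(r_i(T_{w_X}(E_i)))$ and $\sigma\circ\tau(r_{\tau(i)}(T_{w_X}(E_{\tau(i)})))$ agree, since applying $\tau$ to Proposition~\ref{prop:sigma-tau-ri} with $i$ replaced by $\tau(i)$ gives $\sigma(r_i(T_{w_X}(E_i)))=\nu_{\tau(i)}\,\tau(r_{\tau(i)}(T_{w_X}(E_{\tau(i)})))$, and $\nu_{\tau(i)}=\nu_i$, with $\nu_i=1$ in finite type) — but in fact the cleanest route is to simply rewrite $\sigma(r_i(T_{w_X}(E_i)))$ using $\sigma\circ\tau\circ(\sigma\circ\tau)=\id$ so that it equals $\sigma\circ\tau\big(\tau\sigma\cdot r_i(T_{w_X}(E_i))\big)=\sigma\circ\tau\big(r_{\tau(i)}(T_{w_X}(E_{\tau(i)}))\big)$ up to the sign $\nu_i$, which is absorbed into the $(-1)^{2\alpha_i(\rho_X^\vee)}$ (legitimate by Condition~(3) of Definition~\ref{def:admissible} when $\tau(i)=i$, and harmless when $\tau(i)\neq i$ since then the two sides of the Lemma for $i$ and $\tau(i)$ are interchanged). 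Collecting the exponent $(\alpha_i,\alpha_i-w_X\alpha_i)$ from the first step with $-(\alpha_i,2\rho_X)$ yields $q^{(\alpha_i,\,\alpha_i-w_X\alpha_i-2\rho_X)}$, which is the claimed formula.

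The main obstacle I anticipate is the precise telescoping of the scalars from \eqref{eq:T21} along a reduced word for $w_X$ and identifying their product as $(-1)^{2\alpha_i(\rho_X^\vee)}q^{-(\alpha_i,2\rho_X)}$: one must track, for each letter $s_{i_k}$ in the reduced expression, the $K_{i_k}$-weight of the partially-transformed element $T''_{i_{k+1},-1}\cdots T''_{i_\ell,-1}(E_i)$, and recognize that the sum of these weights reorganizes into a pairing of $\alpha_i$ with the sum of the positive roots of $\Phi_X$. This is a standard but delicate computation; it is cleanest to phrase it as the statement that for $u\in U^+_\mu$ one has $T''_{w_X,-1}(u)=(-1)^{(\mu,\,2\rho_X^\vee)_{\mathrm{sym}}}q^{-(\mu,2\rho_X)}T''_{w_X,1}{}^{-1}(u)$ after suitable reindexing, applied with $\mu=\alpha_i$, and to cross-check the exponent in the rank-one and rank-two cases. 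Everything else is a direct chaining of the displayed identities \eqref{eq:bar-ri}, \eqref{eq:bar-Ti}, \eqref{eq:T21}, \eqref{eq:sigma-ri}, \eqref{eq:Ti-sigma} together with Propositions~\ref{prop:finite-st} and \ref{prop:sigma-tau-ri}.
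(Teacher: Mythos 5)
Your derivation follows the paper's proof step for step: apply \eqref{eq:bar-ri} with $\beta=w_X(\alpha_i)$, use \eqref{eq:bar-Ti} to get $\overline{T_{w_X}(E_i)}=T''_{w_X,-1}(E_i)$, telescope \eqref{eq:T21} along a reduced word to produce the factor $(-1)^{2\alpha_i(\rho_X^\vee)}q^{-(\alpha_i,2\rho_X)}$ (the paper does exactly this via the enumeration $\beta_l=s_{i_k}\cdots s_{i_{l+1}}\alpha_{i_l}$ of the positive roots of $\Phi_X$, so $\sum_l(\beta_l,\alpha_i)=(2\rho_X,\alpha_i)$), and then convert $T'_{w_X,-1}=T_{w_X}^{-1}$ into $\sigma\circ T_{w_X}\circ\sigma$ via \eqref{eq:Ti-sigma} and pull $\sigma$ through ${}_ir$ via \eqref{eq:sigma-ri}. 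Up to and including the identity $\overline{r_i(T_{w_X}(E_i))}=(-1)^{2\alpha_i(\rho_X^\vee)}q^{(\alpha_i,\alpha_i-w_X\alpha_i-2\rho_X)}\,\sigma\big(r_i(T_{w_X}(E_i))\big)$ your argument is correct and is the paper's argument.

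The final step is where you go astray. The passage from $\sigma\big(r_i(T_{w_X}(E_i))\big)$ to $\sigma\circ\tau\big(r_{\tau(i)}(T_{w_X}(E_{\tau(i)}))\big)$ is an \emph{exact} identity requiring neither Proposition \ref{prop:finite-st} nor Proposition \ref{prop:sigma-tau-ri} nor any sign: since $\tau(w_X)=w_X$ one has $\tau\circ T_{w_X}=T_{w_X}\circ\tau$, and $\tau\circ r_i=r_{\tau(i)}\circ\tau$ with $\tau(E_i)=E_{\tau(i)}$, whence $\tau\big(r_i(T_{w_X}(E_i))\big)=r_{\tau(i)}(T_{w_X}(E_{\tau(i)}))$ and therefore $\sigma(x)=\sigma\circ\tau(\tau(x))$ gives the claim on the nose. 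Your alternative route --- inserting $(\sigma\circ\tau)^2=\id$, picking up the sign $\nu_i$ from Proposition \ref{prop:sigma-tau-ri}, and ``absorbing'' it into $(-1)^{2\alpha_i(\rho_X^\vee)}$ --- is not valid: $\nu_i$ is an independent sign that cannot be merged into that parity factor (Condition (3) of Definition \ref{def:admissible} only tells you $2\alpha_i(\rho_X^\vee)$ is even when $\tau(i)=i$, which if anything makes the factor $+1$ and leaves $\nu_i$ exposed). Had that absorption been legitimate, the $\nu_i$ in Corollary \ref{cor:bar-ri} would disappear, which is not what the paper asserts; the whole point is that $\nu_i$ enters only \emph{after} this lemma, when $\sigma\circ\tau(r_{\tau(i)}(T_{w_X}(E_{\tau(i)})))$ is replaced by $\nu_i\, r_{\tau(i)}(T_{w_X}(E_{\tau(i)}))$. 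So: delete the last step and replace it by the trivial $\tau$-equivariance observation above, and your proof coincides with the paper's.
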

\begin{proof}
  Using the above properties of the bar involution and of Lusztig's automorphisms we calculate
  \begin{align}
    \overline{r_i(T_{w_X}(E_i))} &\stackrel{\eqref{eq:bar-ri}}{=} q^{(\alpha_i,\alpha_i-w_X(\alpha_i))} {}_i r(\overline{T_{w_X}(E_i)}) \nonumber\\
      &\stackrel{\eqref{eq:bar-Ti}}{=} q^{(\alpha_i,\,\alpha_i-w_X(\alpha_i))} {}_i r(T''_{w_X,-1}(E_i)). \label{eq:step1}
  \end{align}
  For any reduced expression $w_X=s_{i_1}\dots s_{i_k}$ define $\beta_l=s_{i_k} s_{i_{k-1}}\dots s_{i_{l+1}}\alpha_{i_l}$ for $l=1,\dots,k$. The set $\{\beta_l\,|\,l=1,\dots,k\}$ consists of all positive roots of the root system $\Phi_X$. Hence
  \begin{align*}
    \sum_{l=1}^k (\alpha_{i_l},s_{i_{l+1}}\dots s_{i_k}\alpha_i)=\sum_{l=1}^k (\beta_l,\alpha_i)=(2\rho_X,\alpha_i).
  \end{align*} 
In view of the above relation, Equation \eqref{eq:T21} implies 
  \begin{align*}
    T''_{w_X,e}(E_i) = (-1)^{2\alpha_i(\rho_X^\vee)} q^{e(2\rho_X,\alpha_i)} T'_{w_X,e}(E_i).
  \end{align*}
Inserting this for $e=-1$ into \eqref{eq:step1} we obtain
\begin{align*}
   \overline{r_i(T_{w_X}(E_i))}    &\stackrel{\phantom{\eqref{eq:bar-ri}}}{=}   
       (-1)^{2\alpha_i(\rho_X^\vee)}  q^{(\alpha_i,\,\alpha_i-w_X(\alpha_i)-2\rho_X)} 
           {}_i r(T'_{w_X,-1}(E_i)) \\
   &\stackrel{\eqref{eq:Ti-sigma}}{=}(-1)^{2\alpha_i(\rho_X^\vee)}  q^{(\alpha_i,\,\alpha_i-w_X(\alpha_i)-2\rho_X)} {}_i r(\sigma(T_{w_X}(E_i)))\\
   &\stackrel{\eqref{eq:sigma-ri}}{=}(-1)^{2\alpha_i(\rho_X^\vee)}  q^{(\alpha_i,\,\alpha_i-w_X(\alpha_i)-2\rho_X)} \sigma\circ \tau(r_{\tau(i)}(T_{w_X}(E_{\tau(i)})))
\end{align*}  
which concludes the proof of the lemma.
\end{proof}
Using Proposition \ref{prop:sigma-tau-ri} the formula in the above Lemma can be further simplified. To shorten notation we define
\begin{align}\label{eq:li-def}
  \ell_i = q^{(\alpha_i,\,\alpha_i-w_X(\alpha_i)-2\rho_X)} \qquad \mbox{for all $i\in I\setminus X$.}
\end{align} 
Observe that $\ell_i=\ell_{\tau(i)}$. Recall the definition of the signs $\nu_i$ from Proposition \ref{prop:sigma-tau-ri}.
\begin{cor}\label{cor:bar-ri}
  Let $(X,\tau)$ be an admissible pair for $\gfrak$. One has
  \begin{align}\label{eq:bar-ri-pm}
     \overline{r_i(T_{w_X}(E_i))} = \nu_i(-1)^{2\alpha_i(\rho_X^\vee)} \ell_i r_{\tau(i)}(T_{w_X}(E_{\tau(i)}))
  \end{align}
  for all $i\in I\setminus X$.
\end{cor}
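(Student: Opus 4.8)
The statement to prove, Corollary \ref{cor:bar-ri}, follows almost immediately by combining the preceding Lemma with Proposition \ref{prop:sigma-tau-ri}, so the proof plan is essentially a substitution together with a small bookkeeping check on the sign.

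\medskip

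First I would recall the content of the immediately preceding (unnumbered) Lemma, namely
\[
  \overline{r_i(T_{w_X}(E_i))} = (-1)^{2\alpha_i(\rho_X^\vee)} q^{(\alpha_i,\,\alpha_i-w_X\alpha_i-2\rho_X)} \sigma\circ \tau\big(r_{\tau(i)}(T_{w_X}(E_{\tau(i)}))\big),
\]
and then rewrite the $q$-power using the abbreviation $\ell_i = q^{(\alpha_i,\,\alpha_i-w_X(\alpha_i)-2\rho_X)}$ from \eqref{eq:li-def}. The only remaining task is to evaluate $\sigma\circ\tau\big(r_{\tau(i)}(T_{w_X}(E_{\tau(i)}))\big)$. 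Applying Proposition \ref{prop:sigma-tau-ri} with the index $\tau(i)$ in place of $i$ (which is legitimate since $\tau(i)\in I\setminus X$ whenever $i\in I\setminus X$, as $\tau$ preserves $X$), one gets
\[
  \sigma\circ\tau\big(r_{\tau(i)}(T_{w_X}(E_{\tau(i)}))\big) = \nu_{\tau(i)}\, r_{\tau(i)}\big(T_{w_X}(E_{\tau(i)})\big).
\]
Substituting this into the Lemma and using the identity $\nu_{\tau(i)}=\nu_i$ from the last assertion of Proposition \ref{prop:sigma-tau-ri} yields exactly \eqref{eq:bar-ri-pm}.

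\medskip

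The only point requiring a moment's care — and the closest thing to an obstacle here, though it is minor — is matching the sign prefactor. In the Lemma the sign is $(-1)^{2\alpha_i(\rho_X^\vee)}$; after substitution we pick up an additional $\nu_{\tau(i)}=\nu_i\in\{\pm1\}$, giving the combined factor $\nu_i(-1)^{2\alpha_i(\rho_X^\vee)}$ as claimed. One should note that $2\alpha_i(\rho_X^\vee)$ need not be even in general, so $(-1)^{2\alpha_i(\rho_X^\vee)}$ is a genuine sign and not automatically $1$; this is precisely why admissibility condition \eqref{adm3} of Definition \ref{def:admissible} is relevant, and it is consistent with the exponent being integral. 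I would also remark in passing that $\ell_i=\ell_{\tau(i)}$ (already observed in the text after \eqref{eq:li-def}), which makes the formula symmetric under $i\leftrightarrow\tau(i)$, as it must be given $\nu_i=\nu_{\tau(i)}$ and $2\alpha_i(\rho_X^\vee)=2\alpha_{\tau(i)}(\rho_X^\vee)$. Beyond this substitution there is nothing further to do: the corollary is a direct consequence of the two cited results.
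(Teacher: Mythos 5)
Your proof is correct and is exactly the paper's argument: the corollary is obtained by applying Proposition \ref{prop:sigma-tau-ri} (at the index $\tau(i)$, together with $\nu_{\tau(i)}=\nu_i$) to the formula of the preceding Lemma and abbreviating the $q$-power as $\ell_i$. The additional remarks on the sign and on the symmetry under $i\leftrightarrow\tau(i)$ are accurate but not needed.
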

\section{The bar involution for quantum symmetric pairs}\label{sec:bar-involution-QSP}
We are now ready to address quantum symmetric pair coideal subalgebras $B_{\bc,\bs}$. We first recall their construction. In Subsection \ref{sec:relations} we give presentations of $B_{\bc,\bs}$ in terms of generators and relations. Combining these relations with the results of Section \ref{sec:skew-ders} will allow us to prove the desired existence of the bar involution for $B_{\bc,\bs}$. 
\subsection{Quantum symmetric pairs}\label{sec:QSP}
Recall that admissible pairs $(X,\tau)$ as given by Definition \ref{def:admissible} parametrize involutive automorphisms of the second kind of $\gfrak$ up to conjugation by automorphisms of $\gfrak$. The involutive automorphism $\theta=\theta(X,\tau):\gfrak\rightarrow \gfrak$ corresponding to the admissible pair $(X,\tau)$ is made up of four ingredients. Let $\omega:\gfrak\rightarrow \gfrak$ denote the Chevalley involution as given in \cite[(1.3.4)]{b-Kac1}, \cite[(2.4)]{a-Kolb12p}. The element $\tau\in \Aut(A,X)$ can be lifted to an automorphism $\tau$ of $\gfrak$ as in \cite[4.19]{a-KW92}. The longest element $w_X\in W_X$ lifts to an element $m_X$ in the Kac-Moody group associated to $\gfrak$ which acts on $\gfrak$ by the adjoint action $\Ad(m_X)$. Finally, for any group homomorphism $x:Q\rightarrow \field^\times$ into the multiplicative group of $\field$ define an automorphism $\Ad(x)$ of $\gfrak$ by $\Ad(x)|_\hfrak=\id_\hfrak$ and $\Ad(x)(v)=x(\alpha)v$ if $\alpha$ is a root of $\gfrak$ and $v$ lies in the corresponding root space. By \cite[Theorem 2.7]{a-Kolb12p} the involutive automorphism $\theta=\theta(X,\tau)$ is given by
\begin{align}\label{eq:theta-def}
  \theta= \Ad(s(X,\tau)) \circ \Ad(m_X) \circ \tau\circ \omega.
\end{align}
Here $s(X,\tau):Q \rightarrow \field^\times$ denotes a group homomorphism given as follows. Let $>$ be a fixed total order on the set $I$. Then $s(X,\tau)$ is determined by
\begin{align}\label{eq:sDef}
  s(X,\tau)(\alpha_j)=\begin{cases}
      1& \mbox{if $j\in X$ or $\tau(j)=j$,}\\
     i^{\alpha_j(2\rho^\vee_X)}& \mbox{if $j\notin X$ and $\tau(j)>
      j$,}\\
     (-i)^{\alpha_j(2\rho^\vee_X)}& \mbox{if $j\notin X$ and $\tau(j)<
      j$,}\\
    \end{cases}
\end{align}
where $i\in \field$ denotes a square-root of $-1$. To shorten notation we will usually write
\begin{align}\label{eq:s(i)}
  s(j)=s(X,\tau)(\alpha_j) \qquad \mbox{for all $j\in I$.}
\end{align}
\begin{rema}
  The results of this paper also hold for $\field=\Q$. The imaginary unit appearing in \eqref{eq:sDef} can be avoided by a slight modification of the group homomorphisms $s(X,\tau)$, see \cite[Remark 5.2]{a-BalaKolb15p}.
\end{rema}
By construction the involutive automorphism $\theta$ given by \eqref{eq:theta-def} satisfies $\theta(h)=-w_X \tau(h)$ for all $h\in \hfrak$. By duality $\theta$ induces a map
\begin{align*}
  \Theta:\hfrak^*\rightarrow \hfrak^*, \quad \Theta(\alpha)=-w_X \tau(\alpha).
\end{align*} 
Define $Q^\Theta=\{\beta\in Q\,|\,\Theta(\beta)=\beta\}$.
For later use we note the following property of the involution $\Theta$. 
\begin{lem}\label{lem:alphai-alphataui}
Let $(X,\tau)$ be an admissible pair. For any $i\in I$ one has
\begin{equation}
  \Theta(\alpha_{\tau(i)})-\alpha_{\tau(i)}=\Theta(\alpha_{i})-\alpha_{i}. \label{eq:alphai-alphataui}
\end{equation}
\end{lem}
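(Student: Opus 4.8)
The plan is to prove equation \eqref{eq:alphai-alphataui} directly from the definition $\Theta(\alpha)=-w_X\tau(\alpha)$, by rearranging the desired identity into a statement purely about the diagram automorphism $\tau$ and the element $w_X$, and then invoking the defining properties of an admissible pair from Definition \ref{def:admissible}. First I would expand both sides: the left-hand side is $-w_X\tau(\alpha_{\tau(i)}) - \alpha_{\tau(i)} = -w_X\alpha_i - \alpha_{\tau(i)}$, using $\tau^2=\id_I$ from Definition \ref{def:admissible}(1), while the right-hand side is $-w_X\tau(\alpha_i) - \alpha_i = -w_X\alpha_{\tau(i)} - \alpha_i$. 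Thus \eqref{eq:alphai-alphataui} is equivalent to
\begin{align*}
  -w_X\alpha_i - \alpha_{\tau(i)} = -w_X\alpha_{\tau(i)} - \alpha_i,
\end{align*}
that is, to $w_X\alpha_{\tau(i)} - w_X\alpha_i = \alpha_{\tau(i)} - \alpha_i$, or equivalently $w_X(\alpha_i - \alpha_{\tau(i)}) = \alpha_i - \alpha_{\tau(i)}$. So the whole statement reduces to showing that the vector $\alpha_i - \alpha_{\tau(i)}$ is fixed by $w_X$.

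Next I would verify this fixed-point claim by cases according to whether the node lies in $X$ or not. If $i\in X$, then since $\tau(X)=X$ we also have $\tau(i)\in X$, and using Definition \ref{def:admissible}(2), namely that $\tau$ acts on $X$ as $-w_X$, we get $w_X\alpha_i = -\alpha_{\tau(i)}$ and likewise $w_X\alpha_{\tau(i)} = -\alpha_{\tau(\tau(i))} = -\alpha_i$; hence $w_X(\alpha_i - \alpha_{\tau(i)}) = -\alpha_{\tau(i)} + \alpha_i = \alpha_i - \alpha_{\tau(i)}$, as desired. If instead $i\in I\setminus X$, then also $\tau(i)\in I\setminus X$, and it is a standard fact (see \cite[Section 2]{a-Kolb12p}) that for $j\notin X$ one has $w_X\alpha_j \in \alpha_j + \Z\Pi_X$, where $\Pi_X=\{\alpha_k\,|\,k\in X\}$; more precisely $w_X\alpha_j = \alpha_j - \sum_{k\in X} c_{jk}\alpha_k$ for suitable integers, so that $w_X\alpha_j - \alpha_j$ lies in $\Z\Pi_X$ and is orthogonal complement behaviour is not needed here. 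Writing $w_X\alpha_i = \alpha_i + \gamma_i$ and $w_X\alpha_{\tau(i)} = \alpha_{\tau(i)} + \gamma_{\tau(i)}$ with $\gamma_i,\gamma_{\tau(i)}\in \Z\Pi_X$, I would then argue that $\gamma_i = \gamma_{\tau(i)}$: applying $\tau$ (which commutes with $w_X$ since $\tau$ normalizes $W_X$ and fixes $w_X$, as $w_X$ is the unique longest element of the $\tau$-stable parabolic $W_X$) to $w_X\alpha_i = \alpha_i + \gamma_i$ gives $w_X\alpha_{\tau(i)} = \alpha_{\tau(i)} + \tau(\gamma_i)$, hence $\gamma_{\tau(i)} = \tau(\gamma_i)$; but $\tau$ permutes $\Pi_X$ and an elementary computation (or the $\tau$-invariance of the set of positive roots $\Phi_X^+$ and of $w_X$) forces $\tau(\gamma_i) = \gamma_i$. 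In either case $w_X(\alpha_i - \alpha_{\tau(i)}) = \alpha_i + \gamma_i - \alpha_{\tau(i)} - \gamma_{\tau(i)} = \alpha_i - \alpha_{\tau(i)}$.

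I expect the main obstacle to be pinning down the second case cleanly: one must be careful that $\tau$ and $w_X$ commute (which uses that $w_X$ is $\tau$-invariant because $W_X$ is) and that the $\Z\Pi_X$-component of $w_X\alpha_j - \alpha_j$ is $\tau$-invariant for $j\notin X$. A slicker alternative, which I would actually prefer to write up, is to avoid the case distinction altogether: apply $\tau$ directly to the tautological identity and use $\tau w_X = w_X \tau$ together with $\tau^2 = \id$. Concretely, from $\Theta = -w_X\tau$ one computes
\begin{align*}
  \Theta(\alpha_{\tau(i)}) - \alpha_{\tau(i)} = -w_X\tau(\alpha_{\tau(i)}) - \alpha_{\tau(i)} = -w_X\alpha_i - \alpha_{\tau(i)},
\end{align*}
and applying the diagram automorphism $\tau$ — which preserves $(\cdot,\cdot)$, commutes with $w_X$, and sends $\alpha_i\mapsto\alpha_{\tau(i)}$ — to the element $-w_X\alpha_i - \alpha_{\tau(i)}$ yields $-w_X\alpha_{\tau(i)} - \alpha_i = \Theta(\alpha_i)-\alpha_i$; it therefore suffices to observe that $-w_X\alpha_i - \alpha_{\tau(i)}$ is $\tau$-fixed, which again comes down to $w_X(\alpha_i-\alpha_{\tau(i)}) = \alpha_i - \alpha_{\tau(i)}$ and is handled by the two-line case argument above. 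Either way the proof is short once the commutation $\tau w_X = w_X\tau$ is in hand, and that is the only genuinely structural input.
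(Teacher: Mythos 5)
Your reduction of \eqref{eq:alphai-alphataui} to the statement that $w_X(\alpha_i-\alpha_{\tau(i)})=\alpha_i-\alpha_{\tau(i)}$ is correct, and your case $i\in X$ is fine. The gap is in the case $i\in I\setminus X$, which is where all the content of the lemma sits. Having correctly derived $\gamma_{\tau(i)}=\tau(\gamma_i)$ from the commutation $\tau\circ w_X=w_X\circ\tau$, you assert $\tau(\gamma_i)=\gamma_i$ on the grounds of ``an elementary computation'' or the $\tau$-invariance of $\Phi_X^+$ and of $w_X$. Neither suffices: given $\gamma_{\tau(i)}=\tau(\gamma_i)$, the claim $\tau(\gamma_i)=\gamma_i$ is precisely equivalent to the identity you are trying to prove, and it genuinely requires condition (2) of Definition \ref{def:admissible}. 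To see that your proposed justification cannot work, take $X=\{a,b\}$ of type $A_1\times A_1$ with $\tau$ swapping $a$ and $b$, and $i\in I\setminus X$ joined only to $a$ (so $\tau(i)$ is joined only to $b$). Then $w_X=s_as_b$ and $\Phi_X^+=\{\alpha_a,\alpha_b\}$ are both $\tau$-invariant, yet $\gamma_i=\alpha_a\neq\alpha_b=\gamma_{\tau(i)}$, and \eqref{eq:alphai-alphataui} fails; of course this pair violates Definition \ref{def:admissible}(2), since $\tau(\alpha_a)=\alpha_b$ while $-w_X(\alpha_a)=\alpha_a$. So your closing remark that the commutation $\tau w_X=w_X\tau$ is ``the only genuinely structural input'' misidentifies the crux: the structural input is $\tau|_{Q_X}=-w_X|_{Q_X}$.

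The repair is one line and is essentially the paper's whole proof: since $\gamma_i=w_X(\alpha_i)-\alpha_i$ lies in $Q_X=\Z\Pi_X$, Definition \ref{def:admissible}(2) gives $\tau(\gamma_i)=-w_X(\gamma_i)=-w_X\big(w_X(\alpha_i)-\alpha_i\big)=-\big(\alpha_i-w_X(\alpha_i)\big)=\gamma_i$. The paper runs exactly this computation on $w_X(\alpha_i)-\alpha_i$ directly, with no case distinction, and then unwinds it into \eqref{eq:alphai-alphataui}; with the line above inserted, your argument becomes a correct (if slightly longer) version of the same proof.
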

\begin{proof}
Note that $w_X(\alpha_i)-\alpha_i\in Q_X$. By Property (2) in Definition \ref{def:admissible} we get
\begin{align*}
  w_X(w_X(\alpha_i)-\alpha_i)=-\tau(w_X(\alpha_i)-\alpha_i).
\end{align*}
This can be rewritten as
$$\alpha_i+\Theta(\alpha_{\tau(i)})=\Theta(\alpha_i)+\alpha_{\tau(i)},$$
which is equivalent to the claim. 
\end{proof}
In \cite[Definition 4.3]{a-Kolb12p} a quantum group analog $\theta_q(X,\tau):\uqg\rightarrow \uqg$ of $\theta$ was constructed. The algebra automorphism $\theta_q=\theta_q(X,\tau)$ was used to define a quantum group analog $B_{\bc,\bs}$ of the universal enveloping algebra $U(\kfrak)$ of the fixed Lie subalgebra $\kfrak=\{x\in \gfrak\,|\,\theta(x)=x\}$. More explicitly, let $U^0_\Theta$ denote the subalgebra of $U^0$ generated by all $K_\beta$ for $\beta\in Q^\Theta$. The quantum symmetric pair coideal subalgebra $B_{\bc,\bs}=B_{\bc,\bs}(X,\tau)$ corresponding to the  admissible pair $(X,\tau)$ is the subalgebra of $\uqg$ generated by $\cM_X$, $U^0_\Theta$, and the elements
\begin{align}\label{eq:Bi-def}
  B_i = F_i + c_i \theta_q(F_i K_i) K_i^{-1} + s_i K_i ^{-1} \qquad \mbox{for all $i\in I\setminus X$}.
\end{align}
Here $\bc=(c_i)_{i\in I\setminus X}$ denotes a family of parameters in the set 
\begin{align}\label{eq:C-def}
  \cC=\{\bc\in (\field(q)^\times)^{I\setminus X}\,|\, c_i=c_{\tau(i)} \mbox{ if } \tau(i)\neq i \mbox{ and } (\alpha_i,\Theta(\alpha_i))= 0\},
\end{align}
and $\bs=(s_i)_{i\in I\setminus X}$ denotes a family of parameters in a set $\cS$ given by
\begin{align}\label{def:setS}
\mathcal{S} &= \{ \mathbf{s}\in \field(q)^{I\setminus X} | s_i\ne 0 \Rightarrow ( i \in I_{ns} \,\, \mathrm{ and }\,\, a_{ji} \in -2\mathbb{N}_0 \,\, \forall j \in I_{ns}\setminus \{i\}) \} 
\end{align}
where 
\begin{align*}
  I_{ns} &= \{ i \in I\setminus X | \tau(i)= i \textrm{ and } a_{ij}=0 \textrm{ for all } j\in X \},  
\end{align*}
see \cite[(5.9)--(5.11)]{a-Kolb12p}.
\begin{rema} 
There is a crucial typo in the definition of $\mathcal{S}$ in \cite{a-Kolb12p}. The correct definition should be \eqref{def:setS}, obtained from \cite[(5.11)]{a-Kolb12p} by replacing $a_{ij}$ by $a_{ji}$. In \cite{a-Kolb12p} the properties of $\mathcal{S}$ are only used in Step 4 in the proof of Lemma 5.11 and in Remark 5.12. Both arguments require the present definition of $\cS$ and not the one given in \cite[(5.11)]{a-Kolb12p}. 
\end{rema}
By construction one has 
\begin{align}\label{eq:thetaq(FiKi)}
  \theta_q(F_i K_i) =-s(\tau(i)) T_{w_X}(E_{\tau(i)})\qquad \mbox{for all $i\in I\setminus X$}
\end{align}
and hence $\theta_q(F_i K_i)\in U^+_{w_x\alpha_{\tau(i)}}$. For $i\in I\setminus X$ and $j\in X$ there exist elements $\cZ_i\in U^+_{w_x\alpha_{\tau(i)}-\alpha_{\tau(i)}} K_i^{-1}K_{\tau(i)}$ 
and $\cW_{ij} \in U^+_{w_x\alpha_{\tau(i)}-\alpha_{\tau(i)}-\alpha_j} K_i^{-1}K_{\tau(i)}$
such that
\begin{align}
  \kow(\theta_q(F_iK_i)K_i^{-1})&= \theta_q(F_iK_i)K_i^{-1} \ot K_i^{-1} + \cZ_i \ot E_{\tau(i)}K_i^{-1} \nonumber \\ 
      &\qquad \qquad+ \cW_{ij} K_j\ot \ad(E_j)(E_{\tau(i)})K_i^{-1} + \dots\label{eq:W-def}
\end{align}
where $\dots$ denotes terms which are not of weight $\alpha_{\tau(i)}$ or $\alpha_{\tau(i)}+\alpha_j$ in the second tensor factor, see \cite[Lemmata 7.2, 7.7]{a-Kolb12p}. By \eqref{eq:ri-def} and \eqref{eq:thetaq(FiKi)} one has
\begin{align}\label{eq:Z-ri}
  \cZ_i = r_{\tau(i)}(\theta_q(F_i K_i)) K_{\tau(i)} K_i^{-1} 
        = -s(\tau(i)) \, r_{\tau(i)}(T_{w_X}(E_{\tau(i)})) K_{\tau(i)} K_i^{-1}.
\end{align}
The element $\cW_{ij}$ can also be expressed in terms of the skew derivation $r_i$. 
\begin{lem}
  Let $i\in I\setminus X$ and $j\in X$. Then the relation
    \begin{align}\label{eq:W-rjri}
      r_jr_{\tau(i)}\big(\theta_q(F_iK_i)\big)K_{\tau(i)}K_i^{-1}= (1-q^{2(\alpha_i,\alpha_j)})\cW_{ij}
    \end{align}
  holds. In particular, if $\tau(i)=i$ and $(\alpha_i,\alpha_j)\neq 0$ then one has
  \begin{align}\label{eq:W-rjri2}
    \cW_{ij}=  \frac{1}{1-q^{2(\alpha_i,\alpha_j)}} \,r_j(\cZ_i).
  \end{align}
\end{lem}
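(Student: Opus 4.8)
The plan is to compute the iterated skew derivation $r_j r_{\tau(i)}$ of $\theta_q(F_iK_i)$ directly from the expansion of the coproduct. First I would apply $\kow$ to the identity \eqref{eq:W-def} and track, on the right hand side, the component whose second tensor leg has weight $\alpha_j + \alpha_{\tau(i)}$ (equivalently, the coefficient of $E_j K_{\alpha_{\tau(i)}} \otimes E_{\tau(i)} K_i^{-1}$ or similar after suitably rewriting); this is the defining prescription for $r_j$ acting on $r_{\tau(i)}(\theta_q(F_iK_i))$ in the sense of \eqref{eq:ri-def} applied twice. The term $\cZ_i \otimes E_{\tau(i)}K_i^{-1}$ contributes $r_j(\cZ_i) K_j \otimes E_{\tau(i)}K_i^{-1}$ up to the relevant $K$-twist, and the term $\cW_{ij}K_j \otimes \ad(E_j)(E_{\tau(i)})K_i^{-1}$ contributes via the expansion $\ad(E_j)(E_{\tau(i)}) = E_j E_{\tau(i)} - q^{(\alpha_j,\alpha_{\tau(i)})} E_{\tau(i)} E_j$, recalling $a_{\tau(i)\tau(j)}=a_{ij}$ so $(\alpha_i,\alpha_j)=(\alpha_{\tau(i)},\alpha_{\tau(j)})$ and here $\tau(j)=j$ since $j\in X$ is fixed; actually one should be careful: $\tau(j)\in X$ need not equal $j$, but what matters is the weight $\alpha_j$ on the second leg, so only the $E_j$-linear part is extracted.

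The key computation is then that extracting the $E_j$-component (to the left) of $\ad(E_j)(E_{\tau(i)})$ inside the coproduct produces a factor $(1 - q^{2(\alpha_i,\alpha_j)})$: one leg gives $+1$ from the $E_j E_{\tau(i)}$ summand and the other leg, after commuting $E_j$ past $E_{\tau(i)}$ and accounting for the $K$-conjugation built into the skew-derivation bookkeeping, gives $-q^{(\alpha_j,\alpha_{\tau(i)})} \cdot q^{(\alpha_j,\alpha_{\tau(i)})} = -q^{2(\alpha_j,\alpha_{\tau(i)})}$, and $(\alpha_j,\alpha_{\tau(i)}) = (\alpha_j,\alpha_i)$ is not quite right in general — rather one uses that $r_j r_{\tau(i)}$ picks out a specific bidegree and the relevant exponent simplifies to $(\alpha_i,\alpha_j)$ by the weight constraints on $\cW_{ij}$. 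Combining the two contributions and comparing coefficients yields \eqref{eq:W-rjri}. I would present this as: apply $(\kow \otimes \id)\circ\kow$ or iterate the defining relation \eqref{eq:ri-def}, isolate the appropriate graded piece, and read off the equality; a cleaner route is to use \eqref{eq:ri-Fcomm} twice, writing $r_j r_{\tau(i)}(\theta_q(F_iK_i))$ in terms of nested commutators with $F_j$ and $F_{\tau(i)}$, but the coproduct route is more transparent.

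Once \eqref{eq:W-rjri} is established, the special case \eqref{eq:W-rjri2} is immediate: when $\tau(i)=i$ the left hand side of \eqref{eq:W-rjri} becomes $r_j r_i(\theta_q(F_iK_i)) K_i K_i^{-1} = r_j r_i(\theta_q(F_iK_i))$, and by \eqref{eq:Z-ri} with $\tau(i)=i$ we have $r_i(\theta_q(F_iK_i)) = \cZ_i$ (up to the $K_{\tau(i)}K_i^{-1}=1$ factor), so the left side is $r_j(\cZ_i)$; dividing by the nonzero scalar $1 - q^{2(\alpha_i,\alpha_j)}$, which is nonzero precisely because $(\alpha_i,\alpha_j)\neq 0$, gives \eqref{eq:W-rjri2}. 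The main obstacle I anticipate is the careful handling of the $K$-group-like factors and $q$-powers when iterating \eqref{eq:ri-def}: the second application of a skew derivation must account for the $K_j$ already sitting in the middle tensor position, and one must verify that the combined twist produces exactly $1 - q^{2(\alpha_i,\alpha_j)}$ and not, say, $1 - q^{(\alpha_i,\alpha_j)}$ or a version with a sign. This is bookkeeping rather than conceptual, but it is the step where an error would most easily creep in, so I would double-check it against the known low-rank examples computed in \cite{a-Kolb12p}.
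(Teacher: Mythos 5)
Your proposal is correct and follows essentially the same route as the paper: one computes the iterated coproduct of $\theta_q(F_iK_i)K_i^{-1}$ in both association orders, projects onto the component with second and third tensor legs of weights $\alpha_j$ and $\alpha_{\tau(i)}$, and equates by coassociativity, with the factor $1-q^{2(\alpha_j,\alpha_{\tau(i)})}$ arising exactly as you describe from the two summands of $\ad(E_j)(E_{\tau(i)})=E_jE_{\tau(i)}-q^{(\alpha_j,\alpha_{\tau(i)})}E_{\tau(i)}E_j$. The bookkeeping concern you flag is the only delicate point, and your treatment of the special case $\tau(i)=i$ (where $K_{\tau(i)}K_i^{-1}=1$ and the exponent unambiguously equals $(\alpha_i,\alpha_j)$) matches the paper.
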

\begin{proof}
  For any $\alpha\in Q^+$ let $\pi_\alpha:\uqg\rightarrow U^+_\alpha U^0$ denote the projection with respect to the direct sum decomposition $\uqg=\oplus_{\alpha,\beta\in Q^+} U^+_\alpha U^0 U^-_{-\beta}$. It follows from \eqref{eq:W-def} that
  \begin{align*}
    (\id \ot \pi_{\alpha_j}&\ot \pi_{\alpha_{\tau(i)}})\circ(\id\ot\kow)\circ \kow (\theta_q(F_iK_i)K_i^{-1})\\
    &=\cW_{ij} K_j \ot \big( (\pi_{\alpha_j}\ot \pi_{\alpha_{\tau(i)}})\circ \kow (\ad(E_j)(E_{\tau(i)})K_i^{-1})\big)\\
    &= \cW_{ij}K_j\ot (1-q^{2(\alpha_i,\alpha_j)}) E_j K_{\tau(i)}K_i^{-1} \ot E_{\tau(i)} K_i^{-1}.
  \end{align*}
  On the other hand, using the definition of $r_j$ in \eqref{eq:ri-def}, one has
  \begin{align*}
    (\id \ot \pi_{\alpha_j}&\ot \pi_{\alpha_{\tau(i)}})\circ(\kow\ot\id)\circ \kow (\theta_q(F_iK_i)K_i^{-1})\\
    &= r_j r_{\tau(i)}\big(\theta_q(F_iK_i)\big)K_j K_{\tau(i)}K_i^{-1}\ot E_j K_{\tau(i)}K_i^{-1} \ot E_{\tau(i)} K_i^{-1}.
  \end{align*}
   By coassociativity of the coproduct the above two expressions coincide and hence one obtains Equation \eqref{eq:W-rjri}.
\end{proof}
Combining Equation \eqref{eq:Z-ri} with Corollary \ref{cor:bar-ri} one sees how the bar involution acts on $\cZ_i$. Recall from \eqref{eq:li-def} that $\ell_i=q^{(\alpha_i,\alpha_i-w_X(\alpha_i)-2\rho_X)}$ and recall the signs $\nu_i\in \{-1,1\}$ from Proposition \ref{prop:sigma-tau-ri}.
\begin{prop}\label{prop:ocZ}
  Let $i\in I\setminus X$. Then one has 
  \begin{align}\label{eq:ocZ}
    \overline{\cZ_i}=\nu_i \ell_i \cZ_{\tau(i)}.
  \end{align}  
\end{prop}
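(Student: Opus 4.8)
The plan is to combine the explicit formula $\cZ_i = -s(\tau(i))\, r_{\tau(i)}(T_{w_X}(E_{\tau(i)}))\, K_{\tau(i)}K_i^{-1}$ from \eqref{eq:Z-ri} with the behaviour of $r_{\tau(i)}(T_{w_X}(E_{\tau(i)}))$ under the bar involution provided by Corollary \ref{cor:bar-ri}. Applying the bar involution to \eqref{eq:Z-ri} and using that $\overline{\phantom{x}}$ is a $\field$-algebra automorphism with $\overline{K_\beta}=K_\beta^{-1}$, we get
\begin{align*}
  \overline{\cZ_i} = -s(\tau(i))\, \overline{r_{\tau(i)}(T_{w_X}(E_{\tau(i)}))}\; K_{\tau(i)}^{-1}K_i.
\end{align*}
By Corollary \ref{cor:bar-ri} applied to the index $\tau(i)\in I\setminus X$, the middle factor is $\nu_{\tau(i)}(-1)^{2\alpha_{\tau(i)}(\rho_X^\vee)}\ell_{\tau(i)}\, r_i(T_{w_X}(E_i))$, so that
\begin{align*}
  \overline{\cZ_i} = -s(\tau(i))\,\nu_{\tau(i)}(-1)^{2\alpha_{\tau(i)}(\rho_X^\vee)}\ell_{\tau(i)}\, r_i(T_{w_X}(E_i))\, K_{\tau(i)}^{-1}K_i.
\end{align*}
On the other hand, again by \eqref{eq:Z-ri}, $\cZ_{\tau(i)} = -s(i)\, r_i(T_{w_X}(E_i))\, K_i K_{\tau(i)}^{-1}$, and the $K$-factors agree. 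Hence it remains to identify the scalars: to conclude $\overline{\cZ_i}=\nu_i\ell_i\cZ_{\tau(i)}$ we must check
\begin{align}\label{eq:scalar-check}
  s(\tau(i))\,\nu_{\tau(i)}(-1)^{2\alpha_{\tau(i)}(\rho_X^\vee)}\ell_{\tau(i)} = \nu_i\ell_i\, s(i).
\end{align}

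Now I invoke the symmetries established earlier. By Proposition \ref{prop:sigma-tau-ri} one has $\nu_i=\nu_{\tau(i)}$, and the remark after \eqref{eq:li-def} gives $\ell_i=\ell_{\tau(i)}$; both of these factors therefore cancel from \eqref{eq:scalar-check}. Thus \eqref{eq:scalar-check} reduces to the purely combinatorial identity
\begin{align*}
  s(\tau(i))\,(-1)^{2\alpha_{\tau(i)}(\rho_X^\vee)} = s(i),
\end{align*}
which has nothing to do with quantum groups and is a statement about the sign function $s(\cdot)$ from \eqref{eq:sDef}. This is where the main (though still routine) work lies: one must run through the definition \eqref{eq:sDef} and verify the identity case by case according to whether $\tau(i)=i$ or $\tau(i)\neq i$, and in the latter case according to the total order on $I$. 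When $\tau(i)=i$ we have $s(i)=1=s(\tau(i))$, so we need $(-1)^{2\alpha_i(\rho_X^\vee)}=1$; this follows because admissibility (Definition \ref{def:admissible}\eqref{adm3}) forces $\alpha_i(\rho_X^\vee)=\alpha_i(2\rho_X^\vee)/2\in\Z$ for $i\in I_{ns}$, and for $i\in (I\setminus X)$ with $\tau(i)=i$ but $i\notin I_{ns}$ one uses $w_X(\alpha_i)=\alpha_i$ to see $\alpha_i(\rho_X^\vee)\in\Z$ as well. When $\tau(i)\neq i$, say $\tau(i)>i$, then $s(i)=i^{\alpha_i(2\rho_X^\vee)}$ and $s(\tau(i))=(-i)^{\alpha_{\tau(i)}(2\rho_X^\vee)}$, while $\alpha_{\tau(i)}(2\rho_X^\vee)=\alpha_i(2\rho_X^\vee)$ because $\tau$ preserves $X$ and hence $\rho_X^\vee$; so $s(\tau(i))=(-i)^{\alpha_i(2\rho_X^\vee)} = (-1)^{\alpha_i(2\rho_X^\vee)} i^{\alpha_i(2\rho_X^\vee)} = (-1)^{2\alpha_{\tau(i)}(\rho_X^\vee)} s(i)$, as needed; the case $\tau(i)<i$ is symmetric.

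Assembling these pieces gives the claimed formula \eqref{eq:ocZ}. The only genuine obstacle is bookkeeping: making sure that when Corollary \ref{cor:bar-ri} is applied at the index $\tau(i)$ the roles of $i$ and $\tau(i)$ are consistently tracked (in particular that $\ell_{\tau(i)}$ and $\nu_{\tau(i)}$ really do appear, not $\ell_i$ and $\nu_i$, before the symmetry $\ell_i=\ell_{\tau(i)}$, $\nu_i=\nu_{\tau(i)}$ is used), and that the $q$-power in Corollary \ref{cor:bar-ri} is exactly $\ell_i$ from \eqref{eq:li-def} rather than some shifted version. Everything else is a direct substitution.
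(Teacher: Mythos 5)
Your proof is correct and follows essentially the same route as the paper: apply the bar involution to \eqref{eq:Z-ri}, invoke Corollary \ref{cor:bar-ri} at the index $\tau(i)$, use $\nu_i=\nu_{\tau(i)}$ and $\ell_i=\ell_{\tau(i)}$, and absorb the remaining factor via the identity $s(\tau(i))(-1)^{2\alpha_{\tau(i)}(\rho_X^\vee)}=s(i)$, which the paper justifies simply by citing \eqref{eq:sDef}. One small correction to your verification of that identity: in the case $\tau(i)=i$ your intermediate claim $w_X(\alpha_i)=\alpha_i$ is false in general (e.g.\ type $BII$ with $i=1$), but it is also unnecessary, since Definition \ref{def:admissible}\eqref{adm3} directly gives $\alpha_i(\rho_X^\vee)\in\Z$ for every $i\in I\setminus X$ with $\tau(i)=i$, which is all you need there.
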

\begin{proof}
  One calculates
  \begin{align*}
    \overline{\cZ_i} &\stackrel{\eqref{eq:Z-ri}}{=} -s(\tau(i)) \overline{r_{\tau(i)}(T_{w_X}(E_{\tau(i)}))} K_{\tau(i)}^{-1} K_i\\
           &\stackrel{\eqref{eq:bar-ri-pm}}{=} -\nu_i s(\tau(i)) (-1)^{2\alpha_i(\rho_X^\vee)} \ell_i r_i(T_{w_X}(E_i))  K_{\tau(i)}^{-1} K_i\\
           & \stackrel{\eqref{eq:sDef}}{=} -\nu_i s(i) \ell_i r_i(T_{w_X}(E_i))  K_{\tau(i)}^{-1} K_i\\
           &\stackrel{\eqref{eq:Z-ri}}{=} \nu_i \ell_i \cZ_{\tau(i)}
  \end{align*}
which completes the proof of \eqref{eq:ocZ}. 
\end{proof}

\subsection{Relations revisited}\label{sec:relations}
The algebra $B_{\bc,\bs}$ can be described explicitly in terms of generators and relations. In the finite case a presentation of the algebra $B_{\bc,\bs}$ was given by G.~Letzter in \cite[Theorem 7.1]{a-Letzter03} and this was extended to the Kac-Moody case in \cite[Section 7]{a-Kolb12p} for small values of $-a_{ij}$. In the present subsection we recall these results and extend them to a larger class of admissible pairs. The resulting explicit presentations will allow us in Subsection \ref{sec:existence} to prove the existence of the bar involution for $B_{\bc,\bs}$. First, however, we need to introduce some notation.

To extend the notation given in \eqref{eq:Bi-def} from $I\setminus X$ to all of $I$ we write $B_i=F_i$ if $i\in X$. For any multi-index $J=(j_1,\dots,j_k)\in I^k$ define $\wght(J)=\sum_{i=1}^k\alpha_{j_i}$ and $F_J=F_{j_1}\dots F_{j_k}$ and $B_J=B_{j_1}\dots B_{j_k}$. Let $\cJ$ be a fixed subset of $\cup_{k\in \N_0}I^k$ such that $\{F_J\,|\,J\in \cJ\}$ is a basis of $U^-$. Then $\{B_J\,|\,J\in \cJ\}$ is a basis of the left (or right) $\cM_X^+ U^0_\Theta$ module $B_{\bc,\bs}$, see \cite[Proposition 6.2]{a-Kolb12p}. 

By \cite[Theorem 7.1]{a-Kolb12p} the algebra $B_{\bc,\bs}$ is generated over $\cM_X^+ U^0_\Theta$ by the elements $\{B_i\,|\,i\in I\}$ subject to the following defining relations
\begin{align}
     &K_\beta B_i = q^{-(\beta,\alpha_i)} B_i K_\beta && \mbox{for all $\beta\in Q^\Theta$, $i\in I$,} \label{eq:rel1}\\
     & E_i B_j - B_j E_i = \delta_{ij} \frac{K_i - K_i^{-1}}{q_i-q_i^{-1}} && \mbox{for all $i\in X$, $j\in I $,}\label{eq:rel2}\\
     &F_{ij}(B_i,B_j)=C_{ij}(\uc) &&\mbox{for all $i,j\in I$, $i\neq j$.} \label{eq:rel3}
  \end{align}
Here $C_{ij}(\bc)\in\sum_{\wght(J)< \lambda_{ij}} \cM^+_X U^0_\Theta B_J$ with $\lambda_{ij}=(1-a_{ij})\alpha_i+\alpha_j$ can be explicitly determined. In \cite[7.2, 7.3]{a-Kolb12p} this was done for all $i,j$ with $-2\le a_{ij}\le 0$. In the present section we generalize these formulas. By \cite[Theorem 7.3]{a-Kolb12p} one has 
\begin{align*}
  C_{ij}(\bc)=0 \quad \mbox{unless $i=\tau(i)$ or $j=\tau(i)$.} 
\end{align*}
In the case $j=\tau(i)$ we can give an explicit 
formula for $C_{ij}(\bc)$ which holds for any $a_{ij}$. To simplify notation recall the $q$-shifted factorial
\begin{align}\label{eq:phi-def}
  (x;x)_n= \prod_{k=1}^{n}(1-x^k)\qquad \mbox{for any $n\in \N$.} 
\end{align} 
The proof of the following theorem is given in Subsection \ref{sec:proof-relations}.
\begin{thm}\label{thm:Citaui}
  Assume that $i\in I\setminus X$ satisfies $\tau(i)\neq i$ and let $m=1-a_{i\tau(i)}$. Then one has
  \begin{align*}
    C_{i\tau(i)}(\bc)&=\frac{-1}{(q_i-q_i^{-1})^2}\Big(q_i^{-m} (q_i^2;q_i^2)_m B_i^{m-1} c_i\cZ_i
      + q_i (q_i^{-2};q_i^{-2})_m B_i^{m-1} c_{\tau(i)}\cZ_{\tau(i)}\Big).
  \end{align*}
\end{thm}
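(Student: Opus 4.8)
The plan is to compute $C_{i\tau(i)}(\bc)$ directly by evaluating the left hand side $F_{i\tau(i)}(B_i,B_{\tau(i)})$ of the quantum Serre-type relation \eqref{eq:rel3} inside $\uqg$, using the explicit form of the generators $B_i = F_i + c_i\theta_q(F_iK_i)K_i^{-1} + s_iK_i^{-1}$ from \eqref{eq:Bi-def}. Since $j=\tau(i)\neq i$, the element $B_j$ has $s_j$-term only if $j\in I_{ns}$, but $\tau(j)=i\neq j$ forces $s_j=0$; similarly $s_i=0$. So effectively $B_i = F_i + c_i\theta_q(F_iK_i)K_i^{-1}$ and likewise for $B_{\tau(i)}$, with $\theta_q(F_iK_i)\in U^+_{w_X\alpha_{\tau(i)}}$ by \eqref{eq:thetaq(FiKi)}. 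The noncommutative polynomial $F_{ij}$ from \eqref{eq:Fij-def} applied to the $F$-parts alone vanishes by the quantum Serre relation \eqref{eq:q-Serre}, so the nonzero contributions to $C_{i\tau(i)}(\bc)$ come from the "cross terms" where exactly one factor of $B$ contributes its $\theta_q$-part, together with possible higher-order terms which one must show cancel or collapse.

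First I would set up the computation by expanding $F_{i\tau(i)}(B_i,B_{\tau(i)})$ and collecting terms according to how many of the generators contribute their $F$-part versus their $c\cdot\theta_q$-part. The key technical tool is the commutation formula \eqref{eq:ri-Fcomm}, which computes $xF_i - F_ix$ for $x\in U^+_\beta$ in terms of $r_i(x)$ and ${}_ir(x)$; iterating this lets one move the $U^+$-valued element $\theta_q(F_iK_i)$ (or $\theta_q(F_{\tau(i)}K_{\tau(i)})$) past powers of $F_i$, producing precisely the combinatorial $q$-binomial coefficients that assemble into the $q$-shifted factorials $(q_i^2;q_i^2)_m$ and $(q_i^{-2};q_i^{-2})_m$. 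The weight bookkeeping is crucial: the term involving $c_i\cZ_i$ must sit in $B_i^{m-1}\cdot(\text{weight }w_X\alpha_{\tau(i)}-\alpha_{\tau(i)}$ piece$)$, and by \eqref{eq:Z-ri} that piece is exactly $\cZ_i$ up to the torus element and the sign $s(\tau(i))$; one should track how the $\delta_{ij}$-type terms from \eqref{eq:ri-Fcomm} acting on $\theta_q(F_iK_i)$ feed back the scalar $-1/(q_i-q_i^{-1})^2$.

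The heart of the argument, and the step I expect to be the main obstacle, is showing that all contributions involving \emph{two or more} $\theta_q$-factors, and all contributions of intermediate weight, either vanish or reduce to terms already accounted for — i.e. that the right hand side really is of the clean form stated, with no extra lower-order junk of the type allowed in the general bound $C_{ij}(\bc)\in\sum_{\wght(J)<\lambda_{ij}}\cM_X^+U^0_\Theta B_J$. This likely uses the fact that $\theta_q$ is an algebra homomorphism together with the defining relation $F_{ij}(F_i,F_j)=0$ applied after $\theta_q$, i.e. $F_{\tau(i)i}(E_{\tau(i)},E_i)$-type relations in $U^+$, plus the observation that $\theta_q(F_iK_i)$ and $\theta_q(F_{\tau(i)}K_{\tau(i)})$ themselves satisfy a quantum Serre relation coming from the braid-group-twisted generators $T_{w_X}(E_{\tau(i)})$, $T_{w_X}(E_i)$; Lemma \ref{lem:alphai-alphataui} guarantees the weights match up so the leading powers of $B_i$ on both surviving terms are the same $B_i^{m-1}$.

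Once the two surviving cross-terms are identified, the final step is pure bookkeeping: substitute \eqref{eq:Z-ri} to rewrite the $U^+$-valued pieces as $\cZ_i$ and $\cZ_{\tau(i)}$, use $s(i)s(\tau(i))$-relations from \eqref{eq:sDef} (or rather they will already be absorbed in the definition of $\cZ_i$), collect the $q$-power prefactors $q_i^{-m}$ and $q_i$ together with the expansions of the $q_i$-binomial coefficients in $F_{i\tau(i)}$, and recognize the resulting sums as $(q_i^2;q_i^2)_m$ and $(q_i^{-2};q_i^{-2})_m$ respectively. This last identification is a standard $q$-series manipulation (telescoping of $q$-binomial sums into $q$-shifted factorials), and I would simply cite or verify it in one line rather than grind through it. The parameter condition $c_i=c_{\tau(i)}$ from $\cC$ in \eqref{eq:C-def} is \emph{not} needed here since $(\alpha_i,\Theta(\alpha_i))$ need not vanish when $\tau(i)\neq i$; the two parameters $c_i$ and $c_{\tau(i)}$ genuinely appear separately, as the statement shows.
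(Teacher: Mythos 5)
Your proposal takes a genuinely different route from the paper, and as it stands it has a gap at exactly the step you yourself flag as ``the main obstacle''. The paper does \emph{not} expand the product $F_{i\tau(i)}(B_i,B_{\tau(i)})$ inside $\uqg$. Instead it uses the projection formula \eqref{eq:CijY} from \cite[(7.8)]{a-Kolb12p},
$C_{ij}(\bc)=-(\id\ot\vep)\circ Q_{-\lambda_{ij}}\big(\kow(Y)-Y\ot K_{-\lambda_{ij}}\big)$ with $Y=F_{ij}(B_i,B_j)$,
which extracts $C_{ij}(\bc)$ from the \emph{coproduct} of $Y$. Only the three displayed terms of $\kow(B_i)$ (namely $B_i\ot K_i^{-1}$, $1\ot F_i$, $c_i\cZ_i\ot E_{\tau(i)}K_i^{-1}$) can survive the projections $\pi_{0,0}$ and $P_{-\lambda_{ij}}$, and weight counting in the second tensor factor forces exactly one $1\ot F$ and one $c\cZ\ot EK^{-1}$ factor per surviving monomial. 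This is what makes the multi-$\cZ$ contributions disappear \emph{for free}: they are annihilated by the projection, not by a cancellation one has to exhibit. The remaining work is the $q$-commutation of $\cZ$ past $B_i$ and the $q$-binomial sums \eqref{eq:q-binom-vanish}, \eqref{eq:binom-phi}, which is the part of your sketch that is essentially right.

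The gap in your direct-expansion approach is the assertion that all contributions with two or more $\theta_q$-factors, and all the ${}_ir$-type terms generated when you push $T_{w_X}(E_{\tau(i)})$ past powers of $F_i$ via \eqref{eq:ri-Fcomm}, ``either vanish or reduce to terms already accounted for''. You offer only a heuristic (that the twisted generators satisfy Serre relations), but that alone does not control the mixed monomials containing both $F$-parts and $\theta_q$-parts, nor the $K_i^{-1}\,{}_ir(\cdot)$ terms whose torus factors put them in different basis components $\cM_X^+U^0_\Theta B_J$. That such cancellation is not automatic is shown by the neighbouring case $\tau(i)=i$, $a_{ij}=-3$ (Theorem \ref{thm:relsBc}, Case 4), where a genuine $\cZ_i^2$ term survives in $C_{ij}(\bc)$. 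So your plan would need a real argument, specific to $\tau(i)\neq i$, proving that no $\cZ_i\cZ_{\tau(i)}$-type or ${}_ir$-type terms remain; without it the proof is incomplete. The peripheral observations in your write-up are correct: $s_i=s_{\tau(i)}=0$ since $i,\tau(i)\notin I_{ns}$, the two surviving cross terms are the ones you name, the $q$-shifted factorials arise from telescoped $q$-binomial sums, and the condition $c_i=c_{\tau(i)}$ from \eqref{eq:C-def} plays no role here.
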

It remains to consider the case $\tau(i)=i$. In this case the calculations get more involved and so far no general closed formula for $C_{ij}(\bc)$ has been found. The following two theorems provide $C_{ij}(\bc)$ for small values of $-a_{ij}$. The theorems are contained in \cite[Theorems 7.4, 7.8]{a-Kolb12p} apart from Case 4 in Theorem \ref{thm:relsBc} which is obtained by the same methods as the other cases and therefore left to the reader. 
\begin{thm}\label{thm:relsBc}
  Assume that $i,j\in I\setminus X$ and $\tau(i)=i$. Then the elements $C_{ij}(\uc)$ are given by the following formulas.
  
\noindent {\bf Case 1:} $a_{ij}=0$.
  \begin{align}\label{eq:Cij0}
    C_{ij}(\uc)= 0.
  \end{align}
\noindent {\bf Case 2:} $a_{ij}=-1$.
  \begin{align}
    C_{ij}&(\uc)= q_i c_i \cZ_i B_j \label{eq:Cij1}
  \end{align}  
\noindent {\bf Case 3:} $a_{ij}=-2$.
  \begin{align}\label{eq:Cij2}
    C_{ij}(\uc)=&  [2]_{q_i}^2 q_i c_i \cZ_i(B_i B_j - B_j B_i). 
  \end{align}        
\noindent {\bf Case 4:} $a_{ij}=-3$.
  \begin{align}\label{eq:Cij3}
    C_{ij}(\uc)=& - ([3]_{q_i}^2 + 1) (B_i^2 B_j + B_j B_i^2) q_i c_i \cZ_i \\
      & +  [2]_{q_i}([2]_{q_i}[4]_{q_i} + q_i^2 + q_i^{-2})B_i B_j B_i q_i c_i \cZ_i
        - [3]_{q_i}^2 B_j (q_i c_i \cZ_i)^2. \nonumber
  \end{align}  
\end{thm}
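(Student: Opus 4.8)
The plan is to compute $F_{ij}(B_i,B_j)$ directly inside $\uqg$ and then to rewrite the outcome in the basis $\{B_J\,|\,J\in\cJ\}$ of $B_{\bc,\bs}$ over $\cM_X^+U^0_\Theta$. Using \eqref{eq:thetaq(FiKi)} and $\tau(i)=i$, write $B_i=F_i+\mathcal{E}_i$ with
\begin{align*}
  \mathcal{E}_i=-c_i\,s(i)\,T_{w_X}(E_i)\,K_i^{-1}+s_i K_i^{-1}\in U^+U^0 .
\end{align*}
Expanding $F_{ij}(B_i,B_j)$ as a sum of monomials in $F_i,F_j,\mathcal{E}_i,\mathcal{E}_j$, the purely lowering monomial is $F_{ij}(F_i,F_j)$, which vanishes by the quantum Serre relation \eqref{eq:q-Serre}. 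Hence $C_{ij}(\bc)=F_{ij}(B_i,B_j)$, and it remains to bring each of the mixed monomials into the triangular normal form $U^-U^0U^+$ and to re-express its $U^-$-component through $F_i=B_i-\mathcal{E}_i$, $F_j=B_j-\mathcal{E}_j$.

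The normal ordering is governed by the commutation formula \eqref{eq:ri-Fcomm}, which moves a factor $F_i$ past an element of $U^+$ at the price of the skew derivations $r_i$ and ${}_ir$. The decisive structural point is that $T_{w_X}(E_i)\in U^+_{w_X\alpha_i}$ and $w_X\alpha_i=\alpha_i+\gamma$ with $\gamma$ a nonnegative integral combination of the $\alpha_k$ for $k\in X$; thus $T_{w_X}(E_i)$ is a polynomial in $\{E_k\,|\,k\in X\}$ and $E_i$ containing exactly one factor $E_i$. Consequently $r_j(T_{w_X}(E_i))={}_jr(T_{w_X}(E_i))=0$ for $j\in I\setminus X$ with $j\neq i$, so that $F_j$ commutes with $\mathcal{E}_i$ up to a $q$-power, and likewise $F_i$ commutes with $\mathcal{E}_j$. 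The only genuine interaction is that of an $F_i$ with the factor $T_{w_X}(E_i)$ inside an $\mathcal{E}_i$: by \eqref{eq:ri-ad}, \eqref{eq:left-adjointE} and \eqref{eq:Z-ri} both $r_i(T_{w_X}(E_i))$ and ${}_ir(T_{w_X}(E_i))$ lie in the one-dimensional space from Proposition \ref{prop:loc-fin}(2) and hence are scalar multiples of $\cZ_i\in\cM_X^+$, which moreover contains no $E_i$ and so cannot be hit by a further $F_i$; the same holds with $i$ and $j$ interchanged. Together with the bound $\wght(J)<\lambda_{ij}$ recalled before the statement, this already shows that $C_{ij}(\bc)$ is a polynomial in $B_i,B_j$ of bounded degree whose coefficients are words in $c_i\cZ_i$ and $c_j\cZ_j$, and one is reduced to a finite computation of these coefficients.

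For the four cases one carries this out explicitly. When $a_{ij}=0$ one has $F_{ij}(x,y)=xy-yx$; since $(\alpha_i,\alpha_j)=0$ every $q$-power produced is trivial, $F_i$ commutes with $F_j$ by Serre, $\mathcal{E}_i$ commutes with $\mathcal{E}_j$ (again because $(\alpha_i,\alpha_j)=0$ and $\theta_q$ is an algebra automorphism, using the defining conditions on $\bc,\bs$), and the interaction terms cancel pairwise because $r_i(T_{w_X}(E_i))\in\cM_X^+$; hence $C_{ij}(\bc)=0$. For $a_{ij}=-1,-2,-3$ one expands $F_{ij}(B_i,B_j)$ (which has $3,4,5$ summands, each a product of $3,4,5$ generators), normal orders each monomial as above while tracking the $q_i$-powers created by pushing the $K$-factors to the left, and finally re-expresses the $U^-$-parts via $F=B-\mathcal{E}$; the combinations $B_iB_j-B_jB_i$ in Case $3$, and $B_i^2B_j+B_jB_i^2$, $B_iB_jB_i$ together with the quadratic term $B_j(q_ic_i\cZ_i)^2$ in Case $4$, arise precisely in this last re-expansion step. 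Cases $1$--$3$ are \cite[Theorems 7.4, 7.8]{a-Kolb12p}; Case $4$ follows by the same procedure.

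The only obstacle is computational: for $a_{ij}=-3$ the raw expansion of $F_{ij}(B_i,B_j)$ has on the order of one thousand monomials, and after normal ordering one must collect a considerable number of $q_i$-dependent coefficients and verify that they assemble into the stated closed form. There is no conceptual difficulty once the vanishing of the cross derivatives and the weight bound are in place; to keep the bookkeeping manageable it is convenient to filter $B_{\bc,\bs}$ by $\mathcal{G}_n=\sum_{\hght(\wght(J))\le n}\cM_X^+U^0_\Theta B_J$ and to determine $C_{ij}(\bc)$ one graded degree at a time, feeding the lower cases back into the computation.
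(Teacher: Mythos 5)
Your overall strategy---expand $F_{ij}(B_i,B_j)$ directly in $\uqg$ by writing $B_i=F_i+\mathcal{E}_i$, normal order using \eqref{eq:ri-Fcomm}, and re-express through $F=B-\mathcal{E}$---is genuinely different from the route the paper relies on. Here Cases 1--3 are quoted from \cite[Theorems 7.4, 7.8]{a-Kolb12p}, and the ``same methods'' invoked for Case 4 are the coproduct--projection methods of Subsection \ref{sec:proof-relations}, i.e.\ the formula $C_{ij}(\bc)=-(\id \ot \vep)\circ Q_{-\lambda_{ij}}(\kow(Y) - Y\ot K_{-\lambda_{ij}})$. In that approach the only skew derivation that ever appears is $r_{\tau(i)}$, read off from the weight-$\alpha_{\tau(i)}$ component of the second tensor leg of $\kow$, and the projection $\pi_{0,0}$ performs the bookkeeping you propose to do by hand. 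A direct normal-ordering proof would be a legitimate alternative, but as written it has a gap precisely at the point the projection method is designed to avoid.

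The gap is the treatment of ${}_i r(T_{w_X}(E_i))$. Pushing $F_i$ past the factor $T_{w_X}(E_i)$ inside $\mathcal{E}_i$ via \eqref{eq:ri-Fcomm} produces both $r_i(T_{w_X}(E_i))K_i$ and $K_i^{-1}\,{}_i r(T_{w_X}(E_i))$. Your assertion that both lie in the one-dimensional space of Proposition \ref{prop:loc-fin}.(2) is justified only for the first: \eqref{eq:ri-ad} identifies $r_i(T_{w_X}(E_i))K_i^2$ with $a_i\ad(Z_i^+)(K_i^2)$, but no analogous identity is offered for ${}_i r$, and one would have to show that ${}_i r(T_{w_X}(E_i))K_i^2$ is $\ad(E_j)$-invariant for all $j\in X$ and lies in $\ad(\cM_X)(K_i^2)$. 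More importantly, even granting a proportionality ${}_i r(T_{w_X}(E_i))=\mu\, r_i(T_{w_X}(E_i))$, the scalar $\mu$ enters every coefficient of the normal-ordered expression, whereas the stated formulas contain only $\cZ_i$, i.e.\ only $r_i$; moreover the ${}_i r$-terms come attached to $K_i^{-2}$, which does not lie in $U^0_\Theta$ unless $\Theta(\alpha_i)=\alpha_i$, so they cannot simply be absorbed into the coefficients $a_{ij;J}\in\cM_X^+U^0_\Theta$. Your computation therefore closes only if ${}_i r(T_{w_X}(E_i))=0$ whenever $w_X\alpha_i\neq\alpha_i$. This does appear to hold (one can check it for $T_2(E_1)$ in the rank-two cases), but it is a substantive lemma that your write-up neither states nor proves, and without it the four formulas are not established. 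The remaining issues are minor by comparison: the claim in Case 1 that $\mathcal{E}_i$ and $\mathcal{E}_j$ commute needs $a_{i\tau(j)}=a_{ij}$ and the $w_X$- and $\tau$-invariance of the bilinear form rather than just $(\alpha_i,\alpha_j)=0$; the cancellation of all $s_i$-contributions (which uses the constraints defining $\cS$) is asserted implicitly; and Case 4 is left uncomputed here, just as it is in the paper.
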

\begin{thm}\label{thm:relsBc2}
  Assume that $i\in I\setminus X$ and $j\in X$ and $\tau(i)=i$. Then the elements $C_{ij}(\uc)$ are given by the following formulas.

\noindent {\bf Case 1:} $a_{ij}=0$.
  \begin{align}\label{eq:Cij0X}
    C_{ij}(\uc)= 0.
  \end{align}
\noindent {\bf Case 2:} $a_{ij}=-1$.
  \begin{align}
    C_{ij}&(\uc)=  \frac{1}{q_i-q_i^{-1}}\big(q_i^2 B_j c_i\cZ_i - c_i\cZ_i B_j\big)
     +\frac{q_i+q_i^{-1}}{q_j-q_j^{-1}}c_i\cW_{ij} K_j. \label{eq:Cij1X}
  \end{align}  
\noindent {\bf Case 3:} $a_{ij}=-2$.
  \begin{align}\label{eq:Cij2X}
    C_{ij}(\uc)=\frac{1}{q_i-q_i^{-1}}\bigg[& q_i^2 \Big([3]_{q_i} B_i B_j - (q_i^2 + 2)B_j B_i\Big)c_i\cZ_i \\
     &\qquad-c_i\cZ_i\Big((2+q_i^{-2})B_iB_j - [3]_{q_i} B_j B_i \Big)\bigg]\nonumber\\ 
     -&\frac{q_i-q_i^{-1}}{q_j-q_j^{-1}}(q_i+q_i^{-1})^2[3]_{q_i} B_i c_i \cW_{ij} K_j.\nonumber
  \end{align}        
\end{thm}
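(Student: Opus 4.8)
The statement to prove is Theorem~\ref{thm:relsBc2}, which gives $C_{ij}(\bc)$ for $i\in I\setminus X$, $j\in X$, $\tau(i)=i$, and small $-a_{ij}$.

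\medskip

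\noindent\textbf{Plan of proof.}
The strategy is the same as that used for the analogous results in \cite[Section 7]{a-Kolb12p}: compute the inhomogeneous quantum Serre element $F_{ij}(B_i,B_j)$ directly, using the explicit form $B_i=F_i+c_i\theta_q(F_iK_i)K_i^{-1}+s_iK_i^{-1}$ and $B_j=F_j$, and then identify the result as an element of $\sum_{\wght(J)<\lambda_{ij}}\cM_X^+U^0_\Theta B_J$. Note first that since $j\in X$ one has $i\in I_{ns}$ impossible (as $a_{ij}$ will be taken negative), hence the $s_i K_i^{-1}$ term either does not contribute new lower-order terms beyond what is tracked in $\cM_X^+U^0_\Theta$, or can be absorbed; in any case the genuinely new contributions come from the interaction of $F_i$, $F_j$ with the positive part $\theta_q(F_iK_i)K_i^{-1}\in U^+_{w_X\alpha_i}K_i^{-1}$. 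The key commutation tool is Equation~\eqref{eq:ri-Fcomm}, which controls $xF_j-F_jx$ for $x\in U^+$ in terms of the skew derivations $r_j$ and ${}_jr$; since $\theta_q(F_iK_i)$ lies in $U^+_{w_X\alpha_i}$ and $w_X\alpha_i=-\alpha_i+(\text{roots in }X)$, only the derivation $r_j$ in the direction $j\in X$ produces terms in the $B_J$ with $\wght(J)<\lambda_{ij}$, and this is exactly where $\cW_{ij}$ enters via \eqref{eq:W-def}.

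\medskip

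\noindent For each case I would proceed as follows. In \textbf{Case 1} ($a_{ij}=0$), one has $(\alpha_i,\alpha_j)=0$, so $F_i$ and $\theta_q(F_iK_i)K_i^{-1}$ both commute with $F_j$ up to the expected $q$-power, $\cW_{ij}$ vanishes because $\ad(E_j)(E_{\tau(i)})=0$, and $F_{ij}(B_i,B_j)=B_iB_j-B_jB_i=0$ follows immediately. In \textbf{Case 2} ($a_{ij}=-1$) one expands $F_{ij}(B_i,B_j)=B_i^2B_j-(q_i+q_i^{-1})B_iB_jB_i+B_jB_i^2$, substitutes $B_i=F_i+Y_i$ with $Y_i=c_i\theta_q(F_iK_i)K_i^{-1}$ (plus the central $s_iK_i^{-1}$ piece which I track separately), and uses that the pure $F$-part gives the genuine quantum Serre relation $F_{ij}(F_i,F_j)=0$. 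The remaining terms involve exactly one or two factors of $Y_i$; the two-$Y_i$ terms reassemble into $F_{ij}$ applied with one argument replaced, which vanishes by the quantum Serre relations inside the image of $\theta_q$, and the one-$Y_i$ terms are simplified using \eqref{eq:ri-Fcomm} and \eqref{eq:rel1} to move $K$-factors. Collecting the surviving one-$Y_i$ terms, one gets a combination of $B_j\, c_i\cZ_i$, $c_i\cZ_i\, B_j$, and the $\cW_{ij}K_j$ contribution coming from $r_j(\theta_q(F_iK_i))$; matching coefficients yields \eqref{eq:Cij1X}. \textbf{Case 3} ($a_{ij}=-2$) is the same calculation with the degree-$3$ Serre polynomial $F_{ij}(x,y)=x^3y-[3]_{q_i}x^2yx+[3]_{q_i}xyx^2-yx^3$, which is longer but structurally identical; one now also meets second-order skew-derivative terms $r_jr_{\tau(i)}$, handled by \eqref{eq:W-rjri}.

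\medskip

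\noindent\textbf{Main obstacle.}
The hard part is the bookkeeping of the one-$Y_i$ terms in Case~3: after using \eqref{eq:ri-Fcomm} one repeatedly to push $F_j$ past $\theta_q(F_iK_i)$, one accumulates many terms with different $q$-powers (arising from \eqref{eq:rel1} when commuting $K_\beta$ past $B_i$) and one must show that the non-Serre, non-$\cW_{ij}$ terms combine precisely into the stated coefficients $q_i^2([3]_{q_i}B_iB_j-(q_i^2+2)B_jB_i)$ and $-((2+q_i^{-2})B_iB_j-[3]_{q_i}B_jB_i)$ of $c_i\cZ_i$, while every term that is not of the form $\cM_X^+U^0_\Theta B_J$ with $\wght(J)<\lambda_{ij}$ cancels. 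This is the verification that was carried out in \cite[Theorem 7.8]{a-Kolb12p} for exactly these values of $a_{ij}$; since the statement of Theorem~\ref{thm:relsBc2} coincides with that result, the proof is a direct citation together with the observation that the corrected definition \eqref{def:setS} of $\cS$ does not affect the computation (the parameters $s_i$ vanish whenever $a_{ij}\neq 0$ for some $j\in X$, which is the case here).
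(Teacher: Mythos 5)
Your proposal is correct and matches the paper exactly: the paper gives no new proof of Theorem~\ref{thm:relsBc2} but simply notes that it is contained in \cite[Theorem 7.8]{a-Kolb12p}, which is precisely the citation you fall back on, together with the (correct) observation that $s_i=0$ whenever $a_{ij}\neq 0$ for some $j\in X$ so the corrected definition of $\cS$ is irrelevant here. The computational sketch you give is a reasonable account of how such a verification goes, though the cited source (and Subsection~\ref{sec:proof-relations} of this paper for the analogous Theorem~\ref{thm:Citaui}) organizes it via the coproduct--projection formula \eqref{eq:CijY} rather than by direct expansion of $F_{ij}(B_i,B_j)$; since the proof ultimately rests on the citation, this difference is immaterial.
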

By \eqref{eq:ri-Fcomm} we have for $i\in I\setminus X$ and $j\in X$ the relation
\begin{align*}
  \cZ_i B_j - B_j \cZ_i =\frac{r_j(\cZ_i) K_j - K_j^{-1} {}_j r(\cZ_i)}{q_j-q_j^{-1}}.
\end{align*}
Applying this to \eqref{eq:Cij1X} and \eqref{eq:Cij2X} and using \eqref{eq:W-rjri2} the formulas in Theorem \ref{thm:relsBc2} can be written in a way similar to those in Theorem \ref{thm:relsBc}. This gives a unified presentation of Theorems \ref{thm:relsBc} and \ref{thm:relsBc2} in the case
$-2\le a_{ij}\le 0$.
\begin{thm}\label{thm:relsBc3}
  Assume that $i\in I\setminus X$ and $j\in I\setminus\{i\}$ and $\tau(i)=i$. Then the elements $C_{ij}(\uc)$ are  
  given by the following formulas.

\noindent {\bf Case 1:} $a_{ij}=0$.
  \begin{align}\label{eq:Cij0X-v2}
    C_{ij}(\uc)= 0.
  \end{align}
\noindent {\bf Case 2:} $a_{ij}=-1$.
  \begin{align}\label{eq:Cij1X-v2}
    C_{ij}(\uc)=  q_i B_j c_i\cZ_i + \frac{q_i^2c_i r_j(\cZ_i)K_j + q_j^2 q_i^{-2}c_i \,{}_j r(\cZ_i)K_j^{-1}}{(q_i-q_i^{-1})(q_j-q_j^{-1})}.
  \end{align}  
\noindent {\bf Case 3:} $a_{ij}=-2$.
  \begin{align}\label{eq:Cij2X-v2}
    C_{ij}(\uc)=& q_i [2]_{q_i}^2 (B_i B_j - B_j B_i) c_i \cZ_i\\ 
       &\quad-\frac{q_i^4 [2]_{q_i}}{q_j-q_j^{-1}} B_i c_i r_j(\cZ_i) K_j 
       + q_j^2 \frac{q_i^{-6} [2]_{q_i}}{q_j-q_j^{-1}} B_i c_i \,{}_j r(\cZ_i) K_j^{-1}.\nonumber
  \end{align}        
\end{thm}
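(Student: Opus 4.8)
The plan is to deduce the three formulas directly from Theorems \ref{thm:relsBc} and \ref{thm:relsBc2}, whose statements we may assume, by rewriting the $\cW_{ij}K_j$ terms that appear there. The key algebraic input is the commutation identity
\begin{align*}
  \cZ_i B_j - B_j \cZ_i =\frac{r_j(\cZ_i) K_j - K_j^{-1} {}_j r(\cZ_i)}{q_j-q_j^{-1}}
\end{align*}
for $i\in I\setminus X$, $j\in X$, which is an instance of \eqref{eq:ri-Fcomm} applied to the element $\cZ_i\in U^+K_i^{-1}K_{\tau(i)}$ (here $\tau(i)=i$), together with the formula \eqref{eq:W-rjri2} expressing $\cW_{ij}$ in terms of $r_j(\cZ_i)$ whenever $(\alpha_i,\alpha_j)\neq 0$. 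So the argument naturally splits according to $a_{ij}$.

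\textbf{Case 1 ($a_{ij}=0$).} If $j\in I\setminus X$ then $C_{ij}(\uc)=0$ is exactly Case 1 of Theorem \ref{thm:relsBc}. If $j\in X$ then $C_{ij}(\uc)=0$ is exactly Case 1 of Theorem \ref{thm:relsBc2}. In either case \eqref{eq:Cij0X-v2} holds with nothing to prove.

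\textbf{Case 2 ($a_{ij}=-1$).} If $j\in I\setminus X$, then Theorem \ref{thm:relsBc} Case 2 gives $C_{ij}(\uc)=q_ic_i\cZ_iB_j$, and one checks this agrees with \eqref{eq:Cij1X-v2}: indeed, when $j\in I\setminus X$ one has $r_j(\cZ_i)=0$ and ${}_jr(\cZ_i)=0$ since $\cZ_i\in\cM_X^+ U^0$ involves no $E_j$, so the second term of \eqref{eq:Cij1X-v2} vanishes; it remains to note that $q_iB_jc_i\cZ_i=q_ic_i\cZ_iB_j$ because $B_j=F_j$ commutes with $\cZ_i$ when $j\in I\setminus X$ (weights are orthogonal, or more simply $F_j$ does not meet the support of $\cZ_i$) — one has to be a little careful here, and the cleaner route is to observe that Theorem \ref{thm:relsBc} already records the formula in the form $q_ic_i\cZ_iB_j$ and that $\cZ_i$ and $B_j$ commute for $j\in I\setminus X$, $j\neq i$, by \eqref{eq:rel1} and the weight count. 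If $j\in X$, start from \eqref{eq:Cij1X}, use \eqref{eq:W-rjri2} with $(\alpha_i,\alpha_j)=-\epsilon_i=-\tfrac12(\alpha_i,\alpha_i)$ to replace $\tfrac{q_i+q_i^{-1}}{q_j-q_j^{-1}}c_i\cW_{ij}K_j$ by $\tfrac{q_i+q_i^{-1}}{(q_j-q_j^{-1})(1-q_i^{-2})}c_ir_j(\cZ_i)K_j$, then use the commutation identity above to replace $\cZ_iB_j$ by $B_j\cZ_i+\tfrac{r_j(\cZ_i)K_j-K_j^{-1}{}_jr(\cZ_i)}{q_j-q_j^{-1}}$, and collect the coefficients of $B_jc_i\cZ_i$, of $c_ir_j(\cZ_i)K_j$, and of $c_i\,{}_jr(\cZ_i)K_j^{-1}$. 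Simplifying the scalar factors (all of which are elementary rational functions in $q_i,q_j$ with $q_j^2=q_i^{-2(\alpha_i,\alpha_j)/(\alpha_i,\alpha_i)}\cdot$ appropriate power, i.e. $q_i=q_j$ or $q_i=q_j^{\pm1}$ depending on the string) yields \eqref{eq:Cij1X-v2}.

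\textbf{Case 3 ($a_{ij}=-2$).} For $j\in I\setminus X$ this is Theorem \ref{thm:relsBc} Case 3, and as in Case 2 the derivative terms of \eqref{eq:Cij2X-v2} vanish while $(B_iB_j-B_jB_i)c_i\cZ_i=c_i\cZ_i(B_iB_j-B_jB_i)$ up to the required coefficient matching. For $j\in X$ one starts from \eqref{eq:Cij2X}, again inserts \eqref{eq:W-rjri2} to turn the $\cW_{ij}K_j$ term into an $r_j(\cZ_i)K_j$ term, and uses the commutation identity to move all $\cZ_i$'s to the right past $B_i$ and $B_j$. There will now be both a single application and (via $B_i\cZ_iB_j$-type terms) repeated rearrangements producing $r_j(\cZ_i)$ and ${}_jr(\cZ_i)$; after collecting the coefficient of $B_ic_ir_j(\cZ_i)K_j$, of $B_ic_i\,{}_jr(\cZ_i)K_j^{-1}$, and of $(B_iB_j-B_jB_i)c_i\cZ_i$, and simplifying the $q_i,q_j$-rational prefactors, one obtains \eqref{eq:Cij2X-v2}.

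\textbf{Main obstacle.} No conceptual difficulty remains; the theorem is essentially a bookkeeping reorganization of results already proved. The only real work is the scalar simplification in Case 3: one must keep careful track of the factors $q_i^{\pm}$, $q_j^{\pm}$, the relation between $(\alpha_i,\alpha_j)$ and $\epsilon_i,\epsilon_j$ (equivalently between $q_i$ and $q_j$ on the relevant strings), and the $\tfrac{1}{1-q_i^{2(\alpha_i,\alpha_j)}}$ denominator from \eqref{eq:W-rjri2}, to verify that everything collapses to the clean coefficients $q_i[2]_{q_i}^2$, $-\tfrac{q_i^4[2]_{q_i}}{q_j-q_j^{-1}}$, and $q_j^2\tfrac{q_i^{-6}[2]_{q_i}}{q_j-q_j^{-1}}$ displayed in \eqref{eq:Cij2X-v2}. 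This is routine but must be done with care, which is presumably why the paper states the result rather than spelling out the manipulation.
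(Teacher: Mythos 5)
Your proposal follows the paper's own (very brief) proof exactly: for $j\in X$ one rewrites the $\cW_{ij}K_j$ terms of Theorem \ref{thm:relsBc2} using \eqref{eq:W-rjri2} together with the commutation identity $\cZ_i B_j - B_j \cZ_i = \big(r_j(\cZ_i)K_j - K_j^{-1}\,{}_j r(\cZ_i)\big)/(q_j-q_j^{-1})$ from \eqref{eq:ri-Fcomm}, and for $j\in I\setminus X$ one notes $r_j(\cZ_i)={}_j r(\cZ_i)=0$ so the formulas reduce to those of Theorem \ref{thm:relsBc}. The one point you rightly flag as needing care --- commuting $\cZ_i$ past $B_j$ in the $j\in I\setminus X$ subcase --- is left equally implicit in the paper, so this is essentially the same argument.
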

\begin{proof}
  As explained above, for $j\in X$ Formulas \eqref{eq:Cij1X-v2} and \eqref{eq:Cij2X-v2} follow from Theorem \ref{thm:relsBc2}. For $j\in I\setminus X$ one has $r_j(\cZ_i)={}_j r(\cZ_i)=0$ and hence the above formulas coincide with those in Theorem \ref{thm:relsBc}. 
\end{proof}
\begin{rema}
  It is desirable to obtain a closed formula for $C_{ij}(\bc)$ in the setting of the above theorem for general values of $a_{ij}$. In this case one has 
  \begin{align*}
    C_{ij}(\bc)=\sum_{l=1}^{[m/2]} \sum_{k=0}^{m-2l} B_i^k B_j B_i^{m-k-2l} b_{k,l} + \sum_{l=1}^{[m/2]} B_i^{m-2l} d_l
  \end{align*}
for uniquely determined coefficients $b_{k,l}$, $d_l\in  \cM_X^+ U^0_\Theta$ and $m=1-a_{ij}$. Theorem \ref{thm:relsBc}, Case 4, and Theorem \ref{thm:relsBc3}, Cases 2 and 3, indicate that the coefficients $b_{k,l}$ and $d_l$ are fairly involved. The fact that so many linearly independent monomials $B_J$ enter the expression for $C_{ij}(\bc)$ makes this case harder than the case $\tau(i)=j$ treated in Theorem \ref{thm:Citaui}.
\end{rema}
\subsection{Existence of the bar involution.}\label{sec:existence}
  In Subsection \ref{sec:relations} the relations for $B_{\bc,\bs}$ are given explicitly for Cartan matrices $(a_{ij})$ and admissible pairs $(X,\tau)$ with the following properties
  \begin{enumerate}
     \item[(i)]  If $i\in I\setminus X$ with $\tau(i)=i$ and $j\in X$ then $a_{ij}\in \{0,-1,-2\}$.
     \item[(ii)]  If $i\in I\setminus X$ with $\tau(i)=i$ and $i\neq j\in I\setminus X$ then $a_{ij}\in \{0,-1,-2,-3\}$.
  \end{enumerate}
For this reason, in the present subsection, we also restrict to Cartan matrices and admissible pairs which satisfy the two properties above. In particular, this includes all finite types. In this fairly general setting we are now in a position to prove the main result of the present paper.
\begin{thm}\label{thm:bar-involution}
  The following statements are equivalent.
  \begin{enumerate}
    \item There exists a $\field$-algebra automorphism $\overline{\phantom{B}}:B_{\bc,\bs}\rightarrow 
          B_{\bc,\bs}$, $x\mapsto \overline{x}$ with the following properties:
        \begin{enumerate}
           \item  Restricted to $\cM_X U_\Theta^0{}$ the map $\overline{\phantom{B}}$ coincides with the bar involution of $\uqg$. In particular $\overline{q}=q^{-1}$.
           \item  $\overline{B_i}=B_i$ for all $i\in I\setminus X$.
        \end{enumerate}  
    \item The relation
       \begin{align}\label{eq:ocZi}
         \overline{c_i \cZ_i} = q^{(\alpha_i,\alpha_{\tau(i)})} c_{\tau(i)} \cZ_{\tau(i)}
       \end{align}
         holds for all $i\in I\setminus X$ for which $\tau(i)\neq i$ or for which there exists $j\in I\setminus \{i\}$ such that $a_{ij}\neq 0$. 
  \end{enumerate}
\end{thm}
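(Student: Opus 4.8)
The plan is to work with the presentation of $B_{\bc,\bs}$ over $\cM_X^+U^0_\Theta$ by the generators $B_i$, $i\in I$, and the defining relations \eqref{eq:rel1}--\eqref{eq:rel3} of \cite[Theorem 7.1]{a-Kolb12p}, and to test the candidate bar involution against those relations; the explicit formulas for $C_{ij}(\bc)$ in Theorems \ref{thm:Citaui}, \ref{thm:relsBc}, and \ref{thm:relsBc3} are available precisely because we are in the restricted setting (i)--(ii). One preliminary observation is used throughout: the bar involution of $\uqg$ restricts to a $\field$-algebra automorphism of $\cM_X^+U^0_\Theta$ (it fixes each $E_j$, $j\in X$, and sends $K_\beta$ to $K_{-\beta}$ with $-\beta\in Q^\Theta$), and hence also to the usual bar involution of $\cM_XU^0_\Theta$, under which $B_j=F_j\mapsto F_j$ for $j\in X$.

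For the implication (2)$\Rightarrow$(1) I would define $\overline{\phantom{B}}\colon B_{\bc,\bs}\to B_{\bc,\bs}$ on the generating set by the bar involution on $\cM_X^+U^0_\Theta$ together with $\overline{B_i}=B_i$ for all $i\in I$, extended $\field(q)$-semilinearly with $\overline{q}=q^{-1}$ and multiplicatively, and then check that the assignment respects the defining relations. The relations internal to $\cM_X^+U^0_\Theta$ are respected because bar is an automorphism of that subalgebra; relation \eqref{eq:rel1} is respected because the candidate map turns it into the instance of \eqref{eq:rel1} for $-\beta\in Q^\Theta$; and relation \eqref{eq:rel2} is respected because its right hand side $\delta_{ij}(K_i-K_i^{-1})/(q_i-q_i^{-1})$ is bar-invariant (here $\alpha_i\in Q^\Theta$ for $i\in X$, so $K_i\in U^0_\Theta$). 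The substance of the argument is relation \eqref{eq:rel3}. Since the $q_i$-binomial coefficients occurring in $F_{ij}$ are invariant under $q_i\mapsto q_i^{-1}$, the candidate map fixes the left hand side $F_{ij}(B_i,B_j)$, so it remains to check that $C_{ij}(\bc)$, written in the form of Theorems \ref{thm:Citaui}, \ref{thm:relsBc}, \ref{thm:relsBc3}, is unchanged when its $\cM_X^+U^0_\Theta$-coefficients are barred, given \eqref{eq:ocZi}. This I would do by cases. If $\tau(i)\neq i$, only the pair $(i,\tau(i))$ contributes and one uses Theorem \ref{thm:Citaui}: with $m=1-a_{i\tau(i)}$ one has $q^{(\alpha_i,\alpha_{\tau(i)})}=q_i^{1-m}$, and substituting \eqref{eq:ocZi} shows that barring interchanges the two terms of the formula, so their sum is fixed. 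If $\tau(i)=i$ and $j\in I\setminus X$ then $C_{ij}(\bc)$ (Theorem \ref{thm:relsBc}, Cases 1--4) depends on $\cZ_i$ only through $q_ic_i\cZ_i$ and $(q_ic_i\cZ_i)^2$, and \eqref{eq:ocZi} is precisely $\overline{q_ic_i\cZ_i}=q_ic_i\cZ_i$, so the formula is manifestly bar-invariant. The remaining and genuinely delicate case is $\tau(i)=i$ with $j\in X$, where $C_{ij}(\bc)$ involves in addition the terms $c_ir_j(\cZ_i)K_j$ and $c_i\,{}_j r(\cZ_i)K_j^{-1}$ (Theorem \ref{thm:relsBc3}, Cases 2--3): here one computes their bar images by combining \eqref{eq:ocZi} with the intertwining property \eqref{eq:bar-ri} of bar with $r_j$ and ${}_j r$ and with Proposition \ref{prop:ocZ}, and checks that the precise $q$-powers in Theorem \ref{thm:relsBc3} are such that barring again interchanges the two terms. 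Once all relations are verified, the candidate map is a well-defined $\field$-algebra endomorphism; it is $\field$-linear and squares to the identity on generators, hence is an involution and therefore an automorphism, and by construction it restricts on $\cM_XU^0_\Theta$ to the bar involution, which is statement (1).

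For the implication (1)$\Rightarrow$(2), assume that $\overline{\phantom{B}}$ as in (1) exists and fix $i\in I\setminus X$ as in the statement of (2). I would apply $\overline{\phantom{B}}$ to the instance of \eqref{eq:rel3} for the pair $(i,\tau(i))$ when $\tau(i)\neq i$, and for a pair $(i,j)$ with $a_{ij}\neq 0$ when $\tau(i)=i$. The left hand side is fixed, so $\overline{C_{ij}(\bc)}=C_{ij}(\bc)$; substituting the explicit formula, expanding both sides in the $\cM_X^+U^0_\Theta$-module basis $\{B_J\mid J\in\cJ\}$ of $B_{\bc,\bs}$, and comparing the coefficient of the relevant monomial $B_J$ yields \eqref{eq:ocZi}. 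The one point requiring care is that, when $\tau(i)\neq i$, the two terms of Theorem \ref{thm:Citaui} must be separated; this is legitimate because $\cZ_i$ and $\cZ_{\tau(i)}$ are $\field(q)$-linearly independent, being supported on the distinct torus elements $K_i^{-1}K_{\tau(i)}$ and $K_{\tau(i)}^{-1}K_i$.

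I expect the main obstacle to be the term-by-term check of relation \eqref{eq:rel3} in the case $\tau(i)=i$, $j\in X$, where the bar images of the skew-derivation terms $c_ir_j(\cZ_i)K_j$ and $c_i\,{}_j r(\cZ_i)K_j^{-1}$ must be matched against the exact $q$-exponents of Theorem \ref{thm:relsBc3}, using Proposition \ref{prop:ocZ}, the twisted intertwining relation \eqref{eq:bar-ri}, the $W$-invariance of the bilinear form, and the admissibility conditions on $(X,\tau)$. A secondary point is to confirm that the constraints arising from all instances of \eqref{eq:rel3} that involve a fixed $\cZ_i$ collapse to the single condition \eqref{eq:ocZi} and nothing more; this uses $\ell_i=\ell_{\tau(i)}$, $\nu_i=\nu_{\tau(i)}$, and the vanishing $C_{ij}(\bc)=0$ for $j\notin\{i,\tau(i)\}$ quoted from \cite[Theorem 7.3]{a-Kolb12p}.
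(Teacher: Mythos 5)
Your proposal is correct and follows essentially the same route as the paper: both directions reduce to checking that the defining relations \eqref{eq:rel1}--\eqref{eq:rel3} are preserved, and then, using the linear independence of the monomials $B_J$ over $\cM_X^+U^0_\Theta$, to the bar-invariance of the coefficients $C_{ij}(\bc)$ as given by Theorems \ref{thm:Citaui}, \ref{thm:relsBc} and \ref{thm:relsBc3}, with the skew-derivation terms handled via \eqref{eq:bar-ri} exactly as in the paper's illustrative case $a_{ij}=-2$. Your extra remark that $\cZ_i$ and $\cZ_{\tau(i)}$ are linearly independent (distinct torus parts) is a slightly more explicit justification of the coefficient comparison in the $\tau(i)\neq i$ case, but the argument is the same.
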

\begin{proof}
 A $\field$-algebra automorphism as described in (1) exists if and only if relations \eqref{eq:rel1}, \eqref{eq:rel2}, and \eqref{eq:rel3} are preserved under the rule $B_i \mapsto B_i$, $E_j\mapsto E_j$,  $K_\beta \mapsto K_\beta^{-1}$, $q\mapsto q^{-1}$ for $i\in I$, $j\in X$, $\beta\in Q^\Theta$. One verifies that relations \eqref{eq:rel1} and \eqref{eq:rel2} are preserved under this rule, and the same applies for the left hand side of Relation \eqref{eq:rel3}. To deal with the right hand side, recall that $\lambda_{ij}=(1-a_{ij})\alpha_i+\alpha_j$ and write
 \begin{align*}
   C_{ij}(\bc) = \sum_{\mathrm{wt}(J)< \lambda_{ij}} B_J a_{ij;J} \qquad \mbox{for some $a_{ij;J}\in \cM_X^+ U^0_\Theta$}
 \end{align*}
where $B_J=B_{j_1}\dots B_{j_k}$ for $J=(j_1,\dots,j_k)$ as explained at the beginning of Subsection \ref{sec:relations}. Observe that the set $\{B_J\,|\,\wght(J)<\lambda_{ij}\}$ is linearly independent in the right $\cM_X^+U^0_\Theta$-module $B_{\bc,\bs}$ and hence the coefficients $a_{ij;J}$ are uniquely determined. Therefore a $\field$-algebra automorphism as described in (1) exists if and only if the bar involution of $\uqg$ satisfies
\begin{align}\label{eq:bar-condition}
  \overline{a_{ij;J}}=a_{ij;J}
\end{align}
for all $i\in I\setminus X$, $j\in I\setminus\{i\}$. This condition only concerns cases where $C_{ij}(\bc)\neq 0$, that is $\tau(i)=j\neq i$ or ($\tau(i)=i$ and $a_{ij}\neq 0$ for some $j\neq i$). If $\tau(i)=j$ then Theorem \ref{thm:Citaui} together with $\overline{(q_i^2;q_i^2)_m}=(q_i^{-2};q_i^{-2})_m$ implies that \eqref{eq:bar-condition} is equivalent to \eqref{eq:ocZi}. If $\tau(i)=i$ and $a_{ij}\neq 0$ for some $j\neq i$ then Theorem \ref{thm:relsBc3} and Theorem \ref{thm:relsBc}.Case 4 imply that \eqref{eq:bar-condition} is equivalent to \eqref{eq:ocZi}. For illustration we make this explicit in the case $a_{ij}=-2$ of Theorem \ref{thm:relsBc3}. In this case one has
\begin{align}
  a_{ij;(i,j)}&= -a_{ij;(j,i)}= q_i [2]_{q_i}^2 c_i \cZ_i\nonumber\\
  a_{ij;(i)}&= \frac{[2]_{q_i}}{q_j-q_j^{-1}} \big(q_j^2 q_i^{-6} c_i\, {}_j r(\cZ_i) K_j^{-1} - 
      q_i^4 c_i r_j(\cZ_i) K_j\big).\label{eq:aiji}
\end{align}
One obtains $\overline{a_{ij;(i,j)}}=q_i^{-1}[2]_{q_i}^2 \overline{c_i\cZ_i}$. Hence \eqref{eq:bar-condition} holds for $J=(i,j)$ if and only if $\overline{c_i\cZ_i}=q_i^2 c_i\cZ_i$. 
It remains to show that \eqref{eq:ocZi} also implies that $\overline{a_{ij;(i)}}=a_{ij;(i)}$. To this end observe that by \eqref{eq:bar-ri} one has
\begin{align*}
  \overline{r_j(\cZ_i)} = q_j^2 q_i^{2a_{ij}} {}_j r(\overline{\cZ_i}).
\end{align*}
The above relation and \eqref{eq:aiji} allow us to calculate
\begin{align*}
\overline{a_{ij;(i)}}&= \frac{[2]_{q_i}}{q_j-q_j^{-1}} \big(q_i^{-4} \overline{c_i r_j(\cZ_i)} K_j^{-1} - q_j^{-2} q_i^6 \overline{c_i\, {}_j r(\cZ_i)}K_j\big)\\
    &=\frac{[2]_{q_i}}{q_j-q_j^{-1}} \big(q_j^2 q_i^{-6}{}_j r(q_i^{-2}\overline{c_i\cZ_i}) K_j^{-1} -  q_i^4 r_j(q_i^{-2}\overline{c_i\cZ_i})K_j\big).
\end{align*}
Hence \eqref{eq:ocZi} implies \eqref{eq:bar-condition} also in the case $J=(i)$.
\end{proof}
\begin{rema}
  The condition ($\tau(i)\neq i$ or there exists $j\in I\setminus \{i\}$ such that $a_{ij}\neq 0$) means that the Cartan matrix $A=(a_{ij})_{i,j\in I}$ does not contain a component of type $A_1$ which is fixed under $\tau$. If, however, such a component exists and is indexed by $i$, then the existence of a bar involution does not depend on the value of the corresponding parameter $c_i$.
\end{rema}
Theorem \ref{thm:bar-involution} implies that for any admissible pair the coefficients $\bc\in \cC$ can be chosen such that a bar involution exists on $B_{\bc,\bs}$. The following corollary treats the case that Conjecture \ref{conj:sigma-tau-ri} holds for $\gfrak$ and $(X,\tau)$. The case that $\nu_i=-1$ for some $i\in I\setminus X$ is discussed in Remark \ref{rem:nu-1}. 
Define
\begin{align}\label{eq:field0}
  \field_0(q)=\{\lambda\in \field(q)\,|\,\overline{\lambda}=\lambda\}
\end{align}
and observe that this is a subfield of $\field(q)$. 
\begin{cor}\label{cor:nu=1}
  Assume that $\nu_i=1$ for all $i\in I\setminus X$. The following are equivalent:
  \begin{enumerate}
    \item There exists a $\field$-algebra automorphism $\overline{\phantom{B}}:B_{\bc,\bs}\rightarrow 
          B_{\bc,\bs}$ satisfying properties (a) and (b) in Theorem \ref{thm:bar-involution}.(1).
    \item The parameters $\bc=(c_i)_{i\in I\setminus X}\in \cC$ satisfy the following two conditions:
      \begin{enumerate}
        \item If ($\tau(i)=i$ and $a_{ij}\neq 0$ for some $j\in I\setminus \{i\}$) or $(\alpha_i,\Theta(\alpha_i))=0$ then there exists $\lambda_i\in\field_0(q)^\times$ such that
          \begin{align}\label{eq:cond1}
            c_i=c_{\tau(i)}= \lambda_i q^{(\alpha_i,\Theta(\alpha_i)-2\rho_X)/2}.   
          \end{align}
        \item If $\tau(i)\neq i$ and $(\alpha_i,\Theta(\alpha_i))\neq 0$ then 
          \begin{align}\label{eq:cond2}
             c_{\tau(i)}&= q^{(\alpha_i,\Theta(\alpha_i)-2\rho_X)}\overline{c_i}.
          \end{align}
      \end{enumerate}
  \end{enumerate}    
\end{cor}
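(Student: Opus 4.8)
The plan is to feed Proposition~\ref{prop:ocZ} into Theorem~\ref{thm:bar-involution}. By the latter, statement (1) holds precisely when
\begin{align}\label{eq:plan-star}
  \overline{c_i\cZ_i}=q^{(\alpha_i,\alpha_{\tau(i)})}\,c_{\tau(i)}\,\cZ_{\tau(i)}
\end{align}
for every $i\in I\setminus X$ with $\tau(i)\neq i$, or with $a_{ij}\neq 0$ for some $j\in I\setminus\{i\}$. Since $\overline{\phantom{B}}$ is a $\field$-algebra homomorphism we have $\overline{c_i\cZ_i}=\overline{c_i}\,\overline{\cZ_i}$, and by Proposition~\ref{prop:ocZ} together with the hypothesis $\nu_i=1$ this equals $\overline{c_i}\,\ell_i\,\cZ_{\tau(i)}$. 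The element $\cZ_{\tau(i)}$ is nonzero by Proposition~\ref{prop:loc-fin}.(2) --- it is a nonzero scalar multiple of $r_{\tau(i)}(T_{w_X}(E_{\tau(i)}))$, whose product with $K_{\tau(i)}^2$ spans a one-dimensional highest weight space. Hence \eqref{eq:plan-star} is equivalent to the scalar identity
\begin{align}\label{eq:plan-scalar}
  \overline{c_i}\,\ell_i=q^{(\alpha_i,\alpha_{\tau(i)})}\,c_{\tau(i)}.
\end{align}

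Next I would evaluate $\ell_i$. From $\Theta(\alpha_{\tau(i)})=-w_X\tau(\alpha_{\tau(i)})=-w_X(\alpha_i)$ and Lemma~\ref{lem:alphai-alphataui} one obtains $\alpha_i-w_X(\alpha_i)=\alpha_i+\Theta(\alpha_{\tau(i)})=\Theta(\alpha_i)+\alpha_{\tau(i)}$, so that by \eqref{eq:li-def}
\begin{align*}
  \ell_i=q^{(\alpha_i,\,\Theta(\alpha_i)+\alpha_{\tau(i)}-2\rho_X)}=q^{(\alpha_i,\alpha_{\tau(i)})}\,q^{(\alpha_i,\,\Theta(\alpha_i)-2\rho_X)}.
\end{align*}
Substituting this into \eqref{eq:plan-scalar} and cancelling $q^{(\alpha_i,\alpha_{\tau(i)})}$, the bar condition becomes
\begin{align}\label{eq:plan-final}
  c_{\tau(i)}=q^{(\alpha_i,\,\Theta(\alpha_i)-2\rho_X)}\,\overline{c_i}.
\end{align}
Before a case distinction I would record that, by $\tau(\rho_X)=\rho_X$, the $\tau$-invariance of $(\cdot,\cdot)$ and Lemma~\ref{lem:alphai-alphataui}, the exponent $(\alpha_i,\Theta(\alpha_i)-2\rho_X)$ is unchanged under $i\mapsto\tau(i)$; consequently \eqref{eq:plan-final} for $i$ is equivalent to \eqref{eq:plan-final} for $\tau(i)$, so it is enough to impose it for one member of each $\tau$-orbit.

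It remains to turn \eqref{eq:plan-final} into the conditions (a), (b). If $\tau(i)\neq i$ and $(\alpha_i,\Theta(\alpha_i))\neq 0$, then \eqref{eq:plan-final} is literally \eqref{eq:cond2}. In every remaining relevant case --- $\tau(i)=i$, or $\tau(i)\neq i$ with $(\alpha_i,\Theta(\alpha_i))=0$ --- one has $c_i=c_{\tau(i)}$ (trivially in the first case; by the defining condition of $\cC(q^{1/2})$ in the second), so \eqref{eq:plan-final} reduces to $c_i=q^{(\alpha_i,\Theta(\alpha_i)-2\rho_X)}\overline{c_i}$. Setting $\lambda_i=c_i\,q^{-(\alpha_i,\Theta(\alpha_i)-2\rho_X)/2}\in\field(q^{1/2})^\times$ and using $\overline{q^{1/2}}=q^{-1/2}$, this is equivalent to $\lambda_i=\overline{\lambda_i}$, hence to $\lambda_i\in\field_0(q^{1/2})^\times$, i.e.\ to \eqref{eq:cond1}; this substitution is precisely where half powers of $q$ become necessary. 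The only genuinely delicate point, which I expect to be the main obstacle, is the bookkeeping of index sets: one must verify that every $i$ for which Theorem~\ref{thm:bar-involution}.(2) imposes a condition is covered by (a) or (b) --- in particular an $i$ with $\tau(i)=i$ that is joined to $X$ but to no other node of $I\setminus X$ --- whereas a $\tau$-fixed $A_1$ component correctly imposes no constraint on the corresponding parameter (cf.\ the remark after Theorem~\ref{thm:bar-involution}).
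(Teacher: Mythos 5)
Your argument is correct and is essentially the paper's own proof: it reduces statement (1) via Theorem \ref{thm:bar-involution} and Proposition \ref{prop:ocZ} (with $\nu_i=1$) to the scalar identity $\overline{c_i}=q^{(\alpha_i,\,2\rho_X-\Theta(\alpha_i))}c_{\tau(i)}$, using Lemma \ref{lem:alphai-alphataui} to rewrite the exponent of $\ell_i$, and then splits into the cases of (2)(a) and (2)(b) exactly as you do, the only additional remark being that when $\tau(i)\neq i$ and $(\alpha_i,\Theta(\alpha_i))=0$ the equality $c_i=c_{\tau(i)}$ is already forced by the definition of $\cC(q^{1/2})$. Concerning the index bookkeeping you single out as the main remaining obstacle: the paper's proof does not carry it out either, but merely asserts that the set of $i$ for which Theorem \ref{thm:bar-involution}.(2) imposes \eqref{eq:ocZi} is the set covered by the hypotheses of (2)(a) and (2)(b), so your proposal supplies everything the published argument does.
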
      
\begin{proof}
  If $\nu_i=1$ then by Proposition \ref{prop:ocZ} Relation \eqref{eq:ocZi} is equivalent to
  $\overline{c_i}=q^{(\alpha_i,\alpha_{\tau(i)})} \ell_i^{-1} c_{\tau(i)}$. Using Lemma \ref{lem:alphai-alphataui} this can be rewritten as
  \begin{align}\label{eq:cond3}
    \overline{c_i} = q^{(\alpha_i,w_X(\alpha_i)-\alpha_i+\alpha_{\tau(i)}+2\rho_X)}c_{\tau(i)}
    =q^{(\alpha_i, 2\rho_X-\Theta(\alpha_i))} c_{\tau(i)}
  \end{align}  
  and has to hold for all $i\in I\setminus X$ which satisfy one of the hypotheses in Conditions (2)(a) or (2)(b) of the Corollary. If ($\tau(i)=i$ and $a_{ij}\neq 0$ for some $j\in I\setminus X$) or ($\tau(i)\neq i$ and $(\alpha_i,\Theta(\alpha_i))\neq 0$) then Equation \eqref{eq:cond3} is equivalent to Equations \eqref{eq:cond1} and \eqref{eq:cond2}, respectively. 
  
It remains to consider the case that $\tau(i)\neq i$ and $(\alpha_i,\Theta(\alpha_i))= 0$. In this case $\Theta(\alpha_i)=-\alpha_{\tau(i)}$, see \cite[Lemma 5.3]{a-Kolb12p}, and hence $w_X(\alpha_i)=\alpha_i$ and $(\alpha_i,2\rho_X)=0$. In this case \eqref{eq:cond1} is equivalent to $c_i=c_{\tau(i)}=\lambda_i\in \field_0(q)^\times$. The relation $c_i=c_{\tau(i)}\in \field(q)^\times$ is necessary because of the definition of $\cC$, see \eqref{eq:C-def}. Under this assumption the relation $c_i=c_{\tau(i)}\in \field_0(q)^\times$ given by \eqref{eq:cond1} is indeed equivalent to the relation $\overline{c_i}=c_{\tau(i)}$ given by \eqref{eq:cond3}.
\end{proof}
\begin{rema}\label{rem:q-exponent-even}
It can be shown that the exponent $(\alpha_i,\Theta(\alpha_i)-2\rho_X)/2$ in \eqref{eq:cond1} is always an integer. This, together with Corollary \ref{cor:nu=1}, shows that if $\nu_i=1$ then the Equations \eqref{eq:ocZi} for $i\in I\setminus X$ always have solutions $c_i\in\mathbb{K}(q)$. Moreover, these solutions can be chosen to be powers of $q$ and hence specialize to $1$ at $q=1$. 

To prove that the exponent in \eqref{eq:cond1} is always an integer, assume that we are in case (2)(a) of the corollary, that is $i\in I\setminus X$ satisfies ($\tau(i)=i$ and $a_{ij}\neq 0$ for some $j\in I\setminus \{i\}$) or $(\alpha_i,\Theta(\alpha_i))=0$. If $\left(\alpha_i,\Theta(\alpha_i)\right)=0$ and $\tau(i)\ne i$, then by \cite[Lemma 5.3]{a-Kolb12p} we have $(\alpha_i, 2\rho_X)=0$, and hence $(\alpha_i,\Theta(\alpha_i)-2\rho_X)/2=0$. 

It remains to consider the case that $\tau(i)=i$. Only considering a connected component of the Dynkin diagram of $\gfrak_X$ we may assume that $X$ is connected and that $(X,\tau)$ satisfies  all conditions in Definition \ref{def:admissible} of an admissible pair except possibly condition (3).
We want to show that $(\alpha_i,w_X(\alpha_i)+2\rho_X)$ is even. 
Fix a reduced expression of the longest word $w_X=s_{i_1}s_{i_2}\ldots s_{i_k}$ in $W_X$ and recall that
$\{\beta_l=s_{i_1}s_{i_2}\ldots s_{i_{l-1}}(\alpha_{i_l})\,|\,l=1,\ldots k\}$ are the positive roots of the root system $\Phi_X$. 
We calculate 
\begin{align*}
(\alpha_i,w_X(\alpha_i)+2\rho_X)&= \Big(\alpha_i,s_{i_1}s_{i_2}\ldots s_{i_k}(\alpha_i)+\sum_{l=1}^k\beta_l\Big)\\
&= (\alpha_i,\alpha_i)+\sum_{l=1}^k(1-\alpha_i(h_{i_l}))(\alpha_i,\beta_l).
\end{align*}
The inner product $(\alpha_i,\alpha_i)=2\epsilon_i$ is always even. By a case by case investigation for each finite simple root system $\Phi_X$ one can verify that the number $\sum_{l=1}^n(1-\alpha_i(h_{i_l}))(\alpha_i,\beta_l)$ is also even. 
\end{rema}
\begin{rema}\label{rem:nu-1}
  For general $\nu_i$, $i\in I\setminus X$ and $\bc=(c_i)_{i\in I\setminus X}\in \cC$ define 
  \begin{align*}
    d_i=\begin{cases} c_i & \mbox{if $\nu_i=1$,}\\
                   (q-q^{-1})c_i & \mbox{if $\nu_i=-1$.}
        \end{cases}
  \end{align*}
Then there exists a bar involution as in Theorem \ref{thm:bar-involution}.(1) on $B_{\mathbf{d},\bs}$ for $\mathbf{d}=(d_i)_{i\in I\setminus X}$ if and only if $\bc$ satisfies Condition (2) of Corollary \ref{cor:nu=1}. Indeed, in this case the condition $\overline{d_i\cZ_i}=q^{(\alpha_i,\alpha_{\tau(i)})} d_{\tau(i)} \cZ_{\tau(i)}$ is equivalent to $\overline{d_i}=\nu_i q^{(\alpha_i,\alpha_{\tau(i)})}\ell_i^{-1} d_{\tau(i)}$ which in turn is equivalent to the relation $\overline{c_i}= q^{(\alpha_i,\alpha_{\tau(i)})}\ell_i^{-1} c_{\tau(i)}$. This shows that the bar involution also exists if $\nu_i=-1$ for some $i\in I\setminus X$.
\end{rema}
\begin{eg}\label{eg:AIII-IV}
  Consider the case that $\gfrak=\slfrak_{n+1}(\field)$ and $(X,\tau)$ is of type $AIII/IV$ in Araki's table \cite[p.~32]{a-Araki62}. We use the standard enumeration of simple roots $I=\{1,\dots,n\}$ and $\tau$ is the nontrivial diagram automorphism. There are two cases.
 
\noindent {\bf Case I.} $X=\{r+1,r+2,\dots,n-r\}$ for some $r\le n/2$. To determine the exponents in \eqref{eq:cond1} and \eqref{eq:cond2} one calculates
\begin{align*}
  (\alpha_r, \Theta(\alpha_r)-2\rho_X)=n-2r+1.
\end{align*}  
Hence, by Corollary \ref{cor:nu=1}, a bar involution as in Theorem \ref{thm:bar-involution}.(1) exists on $B_{\bc,\bs}$ if and only if
\begin{align*}
  &c_i=c_{n+1-i}\in \field_0(q)^\times \qquad \mbox{for $i=1,2,\dots,r-1$},\\
  &c_r\in \field(q)^\times\mbox{ arbitrary, and } c_{n+1-r} = q^{n-2r+1}\overline{c_r}. 
\end{align*}
\noindent {\bf Case II.} $X=\emptyset$ and $n$ odd. In this case, by Corollary \ref{cor:nu=1}, a bar involution as in Theorem \ref{thm:bar-involution}.(1) exists on $B_{\bc,\bs}$ if and only if
\begin{align*}
  &c_i=c_{n+1-i}\in \field_0(q)^\times \qquad \mbox{for $i=1,2,\dots,(n-1)/2$}\\
  &\mbox{and }\quad c_{(n+1)/2}\in q^{-1}\field_0(q)^\times. 
\end{align*}
\end{eg}
\begin{rema}
  The papers \cite{a-BaoWang13p} and \cite{a-EhrigStroppel13p} consider a bar involution for any quantum symmetric pair coideal subalgebra $B_{\bc,\bs}$ of type AIII/IV with $X=\emptyset$. Up to conventions this bar involution is precisely the bar involution obtained here for a special choice of $\bc$ and $\bs$. We make this precise in the case where $n=2r+1$ is odd, the case where $n$ is even being analogous.
  
In \cite{a-BaoWang13p} Bao and Wang consider a Hopf algebra $\cU$ over $\Q(q)$ with generators $E_i, F_i, K_i^{\pm 1}$ for $i\in I$ such that $\field(q)\ot_{\Q(q)} \cU$ is isomorphic over $\field$ to  $U_q(\slfrak_n)$ as defined here under the map
\begin{align*}
  1\ot E_i \mapsto F_i, \qquad 1\ot F_i \mapsto E_i, \qquad 1\ot K_i\mapsto K_i, \qquad q\mapsto q^{-1}.
\end{align*}  
In our notation, they consider the subalgebra $\cU^\iota$ of $U_q(\slfrak_n)$ generated by the elements
\begin{align}\label{eq:BW-generators}
  \begin{aligned}
  e_{\alpha_i}&=F_i + E_i K_{\tau(i)}^{-1},& t&= F_r + q^{-1} E_r K_r^{-1} + K_r^{-1}.\\
  f_{\alpha_i}&=F_{\tau(i)} + E_i K_{\tau(i)}^{-1}, & k_{\alpha_i}^{\pm 1}&=(K_i K_{\tau(i)}^{-1})^{\pm 1}.
  \end{aligned}
\end{align}
for $i=1, \dots,r-1$ with $\tau(i)=n-i+1$, see \cite[Proposition 2.2]{a-BaoWang13p}. Hence $U^\iota=B_{\bc,\bs}$ where $\bc=(c_i)$ and $\bs=(s_i)$ are given by
\begin{align}\label{eq:ci-def}
  c_i&=\begin{cases} -q^{-1} & \mbox{if $i=r$,}\\ -1 & \mbox{else,}  \end{cases} &
  s_i&=\begin{cases} 1 & \mbox{if $i=r$,}\\ 0 & \mbox{else.}  \end{cases}
\end{align}
The parameters $\bc$ do indeed satisfy the conditions in Case II of Example \ref{eg:AIII-IV}. Moreover, the bar involution on $U^\iota$ given by \cite[Lemma 2.1.(3)]{a-BaoWang13p} coincides with the bar involution on $B_{\bc,\bs}$ from Corollary \ref{cor:nu=1}. 

Similarly, Ehrig and Stroppel consider an (infinite version) of $U_q(\glfrak_n)$ denoted by $\cU^{\shalfnote}$ which has the same coproduct as $U_q(\slfrak_n)$ in our conventions. They define a coideal subalgebra $\cH^{\shalfnote}$ of $\cU^{\shalfnote}$ in \cite[Definition 6.12]{a-EhrigStroppel13p}. The generators $B_i$ of $\cH^{\shalfnote}$ given in \cite[Definition 6.9]{a-EhrigStroppel13p} are identical to the generators $B_i$ in \eqref{eq:Bi-def} for $(c_i)_{i\in I}$ given by \eqref{eq:ci-def} and $s_i=0$ for all $i\in I$.
\end{rema}
\subsection{Equivalence of quantum symmetric pairs with bar involution}\label{sec:equivalence}
Recall from \cite[Definition 9.1]{a-Kolb12p} that we call two coideal subalgebras $B$ and $B'$ of $\uqg$ equivalent if there exists a Hopf algebra automorphism $\varphi:\uqg\rightarrow \uqg$ such that $\varphi(B)=B'$. It is natural to choose the parameters $\bc, \bs$ as simple as possible for $B_{\bc,\bs}$ in a given equivalence class. This was discussed in \cite[Section 9]{a-Kolb12p}, however, the choice of parameters given there yields coideal subalgebras which do not necessarily allow a bar involution as in Theorem \ref{thm:bar-involution}.(1). In the present subsection we reformulate \cite[Proposition 9.2]{a-Kolb12p} taking the bar involution into account.

To obtain sufficiently many automorphisms it makes sense to work with a field extension $\field^{1/2}(q)$ of $\field(q)$ which is closed under quadratic extensions. In \cite[Section 9]{a-Kolb12p} we used the algebraic closure, but only quadratic extensions were actually needed. The bar involution extends to $\field^{1/2}(q)$. As in \eqref{eq:field0} let $\field_0^{1/2}(q)$ denote all elements of $\field^{1/2}(q)$ which are invariant under the bar involution. Moreover, define $\cC^{1/2}$ by \eqref{eq:C-def} with $\field(q)$ replaced by $\field^{1/2}(q)$.

Assume now that $\uqg$ is defined over $\field^{1/2}(q)$. Let $\Htil=\Hom(Q,\field^{1/2}(q)^\times)$ denote the set of group homomorphisms from the root lattice $Q$ into the multiplicative group of $\field^{1/2}(q)$. For any $x\in \Htil$ define a Hopf algebra automorphism $\Ad(x)$ of $\uqg$ by
\begin{align*}
  \Ad(x)(u) = x(\beta)u \qquad \mbox{for all $u\in \uqg_\beta$ and $\beta \in Q$.}
\end{align*}
Up to diagram automorphisms all Hopf algebra automorphisms of $\uqg$ are of the form $\Ad(x)$ for some $x\in \Htil$, see \cite{a-Twietmeyer92}, \cite[Theorem 3.2]{a-Kolb12p}. For any two coideal subalgebras $B$ and $B'$ of $\uqg$ we write $B \stackrel{\Htil}{\sim} B'$ if there exists $x\in \Htil$ such that $\Ad(x)(B)=B'$.
In modification of \cite[(9.1)]{a-Kolb12p} define
\begin{align*}
  \cD^{1/2}=\big\{\bd\in(&\field^{1/2}(q)^\times)^{I\setminus X}\,\big|\\
      &d_i = q^{(\alpha_i,\Theta(\alpha_i)-2\rho_X)/2}\quad \mbox{if $\tau(i)=i$ or $(\alpha_i,\Theta(\alpha_i))=0$,}\\
    &d_{\tau(i)} = q^{(\alpha_i,\Theta(\alpha_i)-2\rho_X)}\overline{d_i}\quad \mbox{else}  \big\}
\end{align*}
and define $\cS^{1/2}$ by \eqref{def:setS} with $\field(q)$ replaced by $\field^{1/2}(q)$. For $\bd=(d_i)_{i\in I\setminus X}\in \cD^{1/2}$ and  $\bd'=(d'_i)_{i\in I\setminus X}\in \cD^{1/2}$ define
\begin{align*}
  \bd \stackrel{\cD}{\sim} \bd' \quad \Longleftrightarrow \quad d_i'd_i^{-1}\in \field_0^{1/2}(q) \mbox{ for all $i\in I\setminus X$ with $\tau(i)\neq i$}. 
\end{align*}
An equivalence relation $\stackrel{\cS}{\sim}$ on $\cS^{1/2}$ was defined in \cite[Section 9.1]{a-Kolb12p} by
\begin{align*}
 \bs \stackrel{\cS}{\sim} \bs' \quad \Longleftrightarrow \quad s_i=\pm s_i' \mbox{ for all $i\in I_{ns}$.}
\end{align*}
The second and the third statement of the following proposition are obtained by the same methods as \cite[Proposition 9.2]{a-Kolb12p} and hence the proof is omitted. The first statement of the proposition is an immediate consequence of Corollary \ref{cor:nu=1}.
\begin{prop}
  Assume that $\nu_i=1$ for all $i\in I\setminus X$. 
  \begin{enumerate}
    \item Let $\bd\in \cD^{1/2}$ and $\bs\in \cS^{1/2}$. Then there exists a $\field$-algebra automorphism $\overline{\phantom{B}}:B_{\bd,\bs}\rightarrow B_{\bd,\bs}$ satisfying properties (a) and (b) in Theorem \ref{thm:bar-involution}.(1).
    \item Let $\bc\in \cC^{1/2}$ and $\bs\in \cS^{1/2}$ be such that $B_{\bc,\bs}$ has a bar involution as in Theorem \ref{thm:bar-involution}.(1). Then there exists $\bd\in \cD^{1/2}$ and $\bs'\in \cS^{1/2}$ such that 
    $B_{\bc,\bs}\stackrel{\Htil}{\sim}B_{\bd,\bs'}$.
    \item Let $\bd, \bd'\in \cD^{1/2}$ and $\bs,\bs'\in \cS^{1/2}$. Then $B_{\bd,\bs}\stackrel{\Htil}{\sim}B_{\bd',\bs'}$ if and only if $\bd \stackrel{\cD}{\sim} \bd'$ and $\bs \stackrel{\cS}{\sim}\bs'$.
  \end{enumerate}
\end{prop}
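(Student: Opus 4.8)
The plan is to reduce all three statements to the already established machinery of Theorem~\ref{thm:bar-involution}, Corollary~\ref{cor:nu=1}, and \cite[Proposition 9.2]{a-Kolb12p}. For statement~(1), I would simply observe that membership of $\bd$ in $\cD^{1/2}$ is tailored precisely so that $\bd$ satisfies Condition~(2) of Corollary~\ref{cor:nu=1}: comparing the two defining clauses of $\cD^{1/2}$ with \eqref{eq:cond1} and \eqref{eq:cond2}, the first clause gives $d_i=q^{(\alpha_i,\Theta(\alpha_i)-2\rho_X)/2}$, which is \eqref{eq:cond1} with $\lambda_i=1\in\field_0^{1/2}(q)^\times$ (using that $q^{(\alpha_i,\Theta(\alpha_i)-2\rho_X)/2}$ applied in this clause is exactly the factor appearing there since $\overline{\lambda_i}=\lambda_i$), and the second clause is verbatim \eqref{eq:cond2}. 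Since $\nu_i=1$ for all $i$ by hypothesis, Corollary~\ref{cor:nu=1} then yields the desired $\field$-algebra automorphism. The only mild subtlety is the boundary case $\tau(i)\neq i$ with $(\alpha_i,\Theta(\alpha_i))=0$, where one must check $d_i=d_{\tau(i)}\in\field_0^{1/2}(q)^\times$ as required by $\cC^{1/2}$; here $\Theta(\alpha_i)=-\alpha_{\tau(i)}$ forces $(\alpha_i,2\rho_X)=0$, so $d_i=d_{\tau(i)}=1$, which lies in $\field_0^{1/2}(q)^\times$, and the argument closes.

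For statement~(2), I would combine Corollary~\ref{cor:nu=1}(2) with the construction in \cite[Proposition 9.2]{a-Kolb12p}. Given $\bc\in\cC^{1/2}$ admitting a bar involution, Corollary~\ref{cor:nu=1} tells us $c_i=c_{\tau(i)}=\lambda_i q^{(\alpha_i,\Theta(\alpha_i)-2\rho_X)/2}$ in the first regime and $c_{\tau(i)}=q^{(\alpha_i,\Theta(\alpha_i)-2\rho_X)}\overline{c_i}$ in the second. One produces $x\in\Htil$ with $\Ad(x)(B_{\bc,\bs})=B_{\bd,\bs'}$ exactly as in \cite[Proposition 9.2]{a-Kolb12p}, where $\Ad(x)$ rescales each $B_i$ by $x(\alpha_i)$-type factors; the point is that the scaling factors can be chosen from $\field^{1/2}(q)^\times$ so as to remove precisely the $\field_0^{1/2}(q)^\times$-ambiguity $\lambda_i$ in the first regime and to arrange $d_i=q^{(\alpha_i,\Theta(\alpha_i)-2\rho_X)/2}$, while preserving the relation in the second regime (since there the relation between $d_i$ and $d_{\tau(i)}$ is already imposed and is preserved by the admissible rescalings). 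This lands $\bd$ in $\cD^{1/2}$, and $\bs'$ in $\cS^{1/2}$ as in loc.\ cit.

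Statement~(3) is the genuinely quantitative part, but it is again parallel to \cite[Proposition 9.2]{a-Kolb12p}. One direction: if $\bd\stackrel{\cD}{\sim}\bd'$ and $\bs\stackrel{\cS}{\sim}\bs'$, one explicitly builds $x\in\Htil$ realizing the equivalence, using that for $i$ with $\tau(i)=i$ the values $d_i,d_i'$ are both pinned to $q^{(\alpha_i,\Theta(\alpha_i)-2\rho_X)/2}$ (so no scaling is needed there), while for $\tau(i)\neq i$ the freedom $d_i'd_i^{-1}\in\field_0^{1/2}(q)$ is exactly what an element of $\Htil$ supported on $Q^\Theta$-complementary directions can produce; the sign ambiguity in $\bs$ is handled by the diagram-automorphism/torus argument from \cite[Section 9.1]{a-Kolb12p}. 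Conversely, if $B_{\bd,\bs}\stackrel{\Htil}{\sim}B_{\bd',\bs'}$, one reads off the constraints on an implementing $x$ from the requirement that it send generators $B_i$ to scalar multiples compatible with the coideal structure, and these constraints force $d_i'd_i^{-1}\in\field_0^{1/2}(q)$ for $\tau(i)\neq i$ and $s_i=\pm s_i'$, exactly as in \cite[Proposition 9.2]{a-Kolb12p}. I expect the main obstacle to be bookkeeping rather than conceptual: one must verify that the specific normalization defining $\cD^{1/2}$ (with the honest power $q^{(\alpha_i,\Theta(\alpha_i)-2\rho_X)/2}$ rather than an arbitrary scalar multiple of it) is genuinely achievable by an element of $\Htil$ and interacts correctly with the $\tau$-equivariance constraint $c_i=c_{\tau(i)}$ when $(\alpha_i,\Theta(\alpha_i))=0$, and that it is compatible with—rather than overdetermined by—the bar-invariance condition. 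Because statements~(2) and~(3) are asserted to follow ``by the same methods'' as \cite[Proposition 9.2]{a-Kolb12p}, the honest write-up is to point to that proof and indicate only the two places where the factor $q^{(\alpha_i,\Theta(\alpha_i)-2\rho_X)/2}$ replaces the free parameter and where $\field_0^{1/2}(q)$ replaces $\field^{1/2}(q)$.
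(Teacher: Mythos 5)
Your proposal matches the paper's own treatment: the paper derives statement (1) as an immediate consequence of Corollary \ref{cor:nu=1} (exactly your comparison of the two clauses of $\cD^{1/2}$ with \eqref{eq:cond1} and \eqref{eq:cond2}, including the boundary case $\tau(i)\neq i$, $(\alpha_i,\Theta(\alpha_i))=0$ where $d_i=d_{\tau(i)}=1$), and it explicitly omits the proofs of (2) and (3), stating they follow by the same methods as \cite[Proposition 9.2]{a-Kolb12p} — which is precisely the reduction you outline.
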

\begin{eg}
  Consider again the case that $\gfrak=\slfrak_{n+1}(\field)$ and $(X,\tau)$ is of type $AIII/IV$ and keep the notation from Example \ref{eg:AIII-IV}.
   
\noindent{\bf Case I.} $X=\{r+1,r+2,\dots,n-r\}$ for some $r\le n/2$. In this case any quantum symmetric pair coideal subalgebra for $(X,\tau)$ which allows a bar involution as in Theorem \ref{thm:bar-involution}.(1) is equivalent to $B_{\bc,\bs}$ where
\begin{align*}
  &c_i=c_{n+1-i}=1 \quad \mbox{for $i=1,\dots,r-1$},\\
  &c_r\quad \mbox{arbitrary, and $c_{n+1-r}=q^{n-2r+1}\overline{c_r}.$}
\end{align*} 
In particular, $c_r$ parametrizes a family of non-equivalent quantum symmetric pairs. The parameters $\bs=(s_i)_{i\in I\setminus X}$ are all zero.

\noindent{\bf Case II.} $X=\emptyset$ and $n$ is odd.
In this case any quantum symmetric pair coideal subalgebra for $(X,\tau)$ which allows a bar involution as in Theorem \ref{thm:bar-involution}.(1) is equivalent to $B_{\bc,\bs}$ where
\begin{align*}
  c_i=c_{n+1-i}=1 \quad \mbox{for $i=1,\dots,(n-1)/2$} \quad \mbox{and } c_{(n+1)/2}=q^{-1}.
\end{align*} 
The parameters $\bc$ are uniquely determined by the requirement that $\bc\in \cD^{1/2}$. One still has a one-parameter family of non-equivalent quantum symmetric pairs for $(X,\tau)$, however, in this case, the parameter comes from the freedom to choose $\bs$.
\end{eg}
\subsection{Proof of Theorem \ref{thm:Citaui}}\label{sec:proof-relations}
We recall the method to determine $C_{ij}(\bc)$ which was originally devised in \cite[Section 7]{a-Letzter03}. Consider the direct sum decomposition 
\begin{align*}
  \uqg = \mathop{\oplus}_{\lambda\in Q} U^+ K_\lambda S(U^-)
\end{align*}
where $S$ denotes the antipode of $\uqg$. For any $\lambda\in Q$ let 
\begin{align*}
  P_\lambda:\uqg\rightarrow  U^+ K_\lambda S(U^-)
\end{align*}
denote the projection with respect to this decomposition. Similarly, for any $\alpha, \beta \in Q^+$ let $\pi_{\alpha,\beta}:\uqg\rightarrow U^+_\alpha U^0 U^-_{-\beta}$ denote the projection with respect to the direct sum decomposition
\begin{align*}
  \uqg = \mathop{\oplus}_{\alpha, \beta\in Q^+} U^+_\alpha U^0 U^-_{-\beta}.
\end{align*}
To prove Theorem \ref{thm:Citaui} fix $i\in I\setminus X$ with $\tau(i)\neq i$ and set $j=\tau(i)$ and $m=1-a_{ij}$. To shorten notation define $\lambda_{ij}=m\alpha_i+\alpha_j$ and set $Q_{-\lambda_{ij}}=\id\ot P_{-\lambda_{ij}}\circ \pi_{0,0}$ as a vector space endomorphism of $\uqg\otimes \uqg$. By \cite[(7.8)]{a-Kolb12p} one has for $Y=F_{ij}(B_i,B_j)$ the relation
\begin{align}\label{eq:CijY}
  C_{ij}(\bc)=-(\id \ot \vep)\circ Q_{-\lambda_{ij}}(\kow(Y) - Y\ot K_{-\lambda_{ij}}).
\end{align}
To evaluate the right hand side of \eqref{eq:CijY} recall that the quantum Serre polynomial $Y$ is a linear combination of terms of the form $B_i^{m-k}B_jB_i^{k}$. Hence its coproduct can be computed using the formulas 
\begin{eqnarray*} 
\Delta(B_i)&=& B_i\otimes K_i^{-1}+1\otimes F_i +c_i\mathcal{Z}_i\otimes E_{j}K_i^{-1}+(\textrm{rest})_i \\
\Delta(B_j)&=& B_j\otimes K_j^{-1}+1\otimes F_j +c_j\mathcal{Z}_j\otimes E_{i}K_j^{-1}+(\textrm{rest})_j 
\end{eqnarray*} 
which follow from \eqref{eq:Bi-def} and \eqref{eq:W-def}. Any term of the product $\Delta(B_i)^{m-k}\Delta (B_j)\Delta (B_i)^{k}$ containing a factor $(\textrm{rest})_i$ or $(\textrm{rest})_j$ maps to zero under $\pi_{0,0}$. The term $( B_i\otimes K_i^{-1})^{m-k}( B_j\otimes K_j^{-1})( B_i\otimes K_i^{-1})^{k} $ cancels with the corresponding term in $Y\otimes K_{-\lambda_{ij}}$. The only remaining terms of $Q_{-\lambda_{ij}}(\kow(Y) - Y\ot K_{-\lambda_{ij}})$ that have weight zero in the second tensor factor are either the products of $m-1$ terms of the form $B_i\otimes K_i^{-1}$, one term of the form $1\otimes F_i $ and one term of the form $c_j\mathcal{Z}_j\otimes E_{i}K_j^{-1}$, or  $m-1$ terms of the form $B_i\otimes K_i^{-1}$, one term of the form $1\otimes F_j $ and one term of the form $c_i\mathcal{Z}_i\otimes E_{j}K_i^{-1}$. Because of the definition of $\pi_{0,0}$ as a projection from $U^+U^0U^-$ to $U^0$, terms in which $E_i$ appears before $F_i$ are sent to zero by $\pi_{0,0}$, as well as the terms in which $E_j$ appears before $F_j$. Finally, as $\mathcal{Z}_i$ $q$-commutes with $B_i,B_j$, we get that
\begin{align}\label{eq:ajai}
   Q_{-\lambda_{ij}}(\kow(Y) - Y\ot K_{-\lambda_{ij}}) = \big(a_j B_i^{m-1}c_j\cZ_j + a_iB_i^{m-1}c_i\cZ_i\big)\ot K_{-\lambda_{ij}}
\end{align}
for some $a_i,a_j\in \field(q)$. The coefficients $a_i$ and $a_j$ can be determined explicitly. Using $\cZ_j B_i = q^{(\alpha_j-\alpha_i,\alpha_i)} B_i \cZ_j=q_i^{-(m+1)}B_i\cZ_j$ one calculates
\begin{align*}
  a_j  B_i^{m-1}&c_j \cZ_j \ot K_{-\lambda_{ij}}= Q_{-\lambda_{ij}}\Big(\sum_{k=0}^m(-1)^k 
   \left[\begin{array}{c} m\\k\end{array} \right]_{q_i}\cdot\\
     &\qquad \cdot\sum_{l=0}^{m-k-1} B_i^l \, 1 \, B_i^{m-k-l-1} c_j \cZ_j B_i^{k} \ot K_i^{-l} F_i K_i^{-(m-k-l-1)}E_i K_j^{-1}K_i^{-k)}\Big)\\
    =&\sum_{k=0}^m\frac{(-1)^k}{q_i-q_i^{-1}} 
   \left[\begin{array}{c} m\\k\end{array} \right]_{q_i}\sum_{l=0}^{m-k-1}   
    q^{-(m+1)k-2(m-k-l-1)} B_i^{m-1}c_j \cZ_j \ot K_{-\lambda_{ij}}.
\end{align*}
This implies that 
\begin{align}
  a_j=&\sum_{k=0}^m\frac{(-1)^k}{q_i-q_i^{-1}} 
   \left[\begin{array}{c} m\\k\end{array} \right]_{q_i}\sum_{l=0}^{m-k-1} q_i^{-(m-1)k-2(m-1)} q_i^{2l}\nonumber\\
    =& \sum_{k=0}^m\frac{(-1)^k}{q_i-q_i^{-1}} 
   \left[\begin{array}{c} m\\k\end{array} \right]_{q_i} q_i^{- (m-1)k-2(m-1)} \frac{1-q_i^{2(m-k)}}{1-q_i^2}.   \label{eq:first-sum}
\end{align}
By \cite[0.2.(3)]{b-Jantzen96} one knows that 
\begin{align}\label{eq:q-binom-vanish}
  \sum_{k=0}^m(-1)^k 
   \left[\begin{array}{c} m\\k\end{array} \right]_{q} q^{(m-1)k}=0=\sum_{k=0}^m(-1)^k \left[\begin{array}{c} m\\k\end{array} \right]_{q} q^{-(m-1)k}
\end{align}
and hence
\begin{align}
 a_j=& \sum_{k=0}^m\frac{(-1)^k}{(q_i-q_i^{-1})^2} 
   \left[\begin{array}{c} m\\k\end{array} \right]_{q_i} q_i^{1-(m+1)k}.\label{eq:nearly-there}
\end{align}      
Recall the definition of the $q$-shifted factorial $(x;x)_n$ from \eqref{eq:phi-def} and observe that
\begin{align}\label{eq:binom-phi}
  \sum_{k=0}^m (-1)^k 
   \left[\begin{array}{c} m\\k\end{array} \right]_{q} q^{(m+1)k}=(q^2;q^2)_m \qquad \mbox{for all $m\in \N$}
\end{align}  
as is shown by induction on $m$. Inserting \eqref{eq:binom-phi} into \eqref{eq:nearly-there} one obtains
\begin{align}
  a_j = \frac{q_i}{(q_i-q_i^{-1})^2} (q_i^{-2};q_i^{-2}). \label{eq:aj}
\end{align}
Similarly, using $\cZ_i B_i = q_i^{m+1} B_i \cZ_i$ one calculates
\begin{align*}
  a_i  B_i^{m-1}&c_i \cZ_i \ot K_{-\lambda_{ij}}= Q_{-\lambda_{ij}}\Big(\sum_{k=0}^m(-1)^k 
   \left[\begin{array}{c} m\\k\end{array} \right]_{q_i}\cdot\\
     &\quad \qquad \cdot\sum_{l=0}^{k-1} B_i^{m-k}\, 1\, B_i^l c_i \cZ_i B_i^{k-l-1} \ot K_i^{-(m-k)} F_j K_i^{-l}E_j K_i^{-1}K_i^{-(k-l-1)}\Big)\\
     =&\sum_{k=0}^m\frac{(-1)^k}{q_i-q_i^{-1}} 
   \left[\begin{array}{c} m\\k\end{array} \right]_{q_i}\sum_{l=0}^{k-1} q_i^{(m+1)(k-l-1)+(m-1)l} B_i^{m-1}c_i \cZ_i \ot K_{-\lambda_{ij}}.
\end{align*}
This implies that
\begin{align}
  a_i \stackrel{\phantom{\eqref{eq:q-binom-vanish}}}{=}& \sum_{k=0}^m\frac{(-1)^k}{q_i-q_i^{-1}} 
   \left[\begin{array}{c} m\\k\end{array} \right]_{q_i}\sum_{l=0}^{k-1} q_i^{(m+1)(k-1)-2l} \nonumber\\
    \stackrel{\phantom{\eqref{eq:q-binom-vanish}}}{=}&\sum_{k=0}^m\frac{(-1)^k}{q_i-q_i^{-1}} 
   \left[\begin{array}{c} m\\k\end{array} \right]_{q_i}q_i^{(m+1)(k-1)}\frac{1-q_i^{-2k}}{1-q_i^{-2}} \nonumber \\
   \stackrel{\eqref{eq:q-binom-vanish}}{=}&\frac{ q_i^{-m} }{(q_i-q_i^{-1})^2}    
         \sum_{k=0}^m (-1)^k \left[\begin{array}{c} m\\k\end{array} \right]_{q_i} q_i^{(m+1)k} \nonumber\\
     \stackrel{\eqref{eq:binom-phi}}{=}& \frac{q_i^{-m} }{(q_i-q_i^{-1})^2} (q_i^2;q_i^2)_m. \label{eq:ai}        
\end{align}
Inserting the formulas \eqref{eq:aj} and \eqref{eq:ai} into \eqref{eq:ajai} and then into \eqref{eq:CijY} one obtains the statement of Theorem \ref{thm:Citaui}.
\providecommand{\bysame}{\leavevmode\hbox to3em{\hrulefill}\thinspace}
\providecommand{\MR}{\relax\ifhmode\unskip\space\fi MR }
\providecommand{\MRhref}[2]{%
  \href{http://www.ams.org/mathscinet-getitem?mr=#1}{#2}
}
\providecommand{\href}[2]{#2}


\begin{thebibliography}{BKLW14}

\bibitem[Ara62]{a-Araki62}
S.~Araki, \emph{On root systems and an infinitesimal classification of
  irreducible symmetric spaces}, J. Math. Osaka City Univ. \textbf{13} (1962),
  1--34.

\bibitem[BK15]{a-BalaKolb15p}
M.~Balagovi{\'c} and S.~Kolb, \emph{Universal {K}-matrix for quantum symmetric pairs}, preprint, {\ttfamily arXiv:1507.06276v1} (2015), 52 pp.

\bibitem[BKLW14]{a-BaoKuLiWang14p}
H.~Bao, J.~Kujawa, Y.~Li, and W.~Wang, \emph{Geometric {S}chur duality of
  classical type}, preprint, {\ttfamily arXiv:1404.4000v2} (2014), 42 pp.

\bibitem[BLM90]{a-BLM90}
A.~Beilinson, G.~Lusztig, and R~MacPherson, \emph{A geometric setting for the
  quantum deformation of ${G}{L}_n$}, Duke Math. J. \textbf{61} (1990),
  655--677.

\bibitem[BW13]{a-BaoWang13p}
H.~Bao and W.~Wang, \emph{A new approach to {K}azhdan-{L}usztig theory of type $B$ via quantum symmetric pairs}, preprint, {\ttfamily arXiv:1310.0103v1} (2013), 89 pp.

\bibitem[Cal93]{a-Caldero93}
P.~Caldero, \emph{\'{E}l\'ements ad-finis de certains groupes quantiques}, C.
  R. Acad. Sci. Paris (I) \textbf{316} (1993), 327--329.

\bibitem[Che84]{a-Cher84}
I.V. Cherednik, \emph{Factorizing particles on a half-line and root systems},
  Theoret. Math. Phys \textbf{61} (1984), 977--983.

\bibitem[ES13]{a-EhrigStroppel13p}
M.~Ehrig and C.~Stroppel, \emph{Nazarov-{W}enzl algebras, coideal subalgebras
  and categorified skew {H}owe duality}, preprint, {\ttfamily
  arXiv:1310.1972v2} (2013), 76 pp.

\bibitem[Jan96]{b-Jantzen96}
J.C. Jantzen, \emph{Lectures on quantum groups}, Grad. Stud. Math., vol.~6,
  Amer. Math. Soc, Providence, RI, 1996.

\bibitem[JL94]{a-JoLet2}
A.~Joseph and G.~Letzter, \emph{Separation of variables for quantized
  enveloping algebras}, Amer. J. Math. \textbf{116} (1994), 127--177.

\bibitem[Kac90]{b-Kac1}
V.~G. Kac, \emph{Infinite dimensional {L}ie algebras}, 3rd. ed., Cambridge
  University Press, Cambridge, 1990.

\bibitem[Kol14]{a-Kolb12p}
S.~Kolb, \emph{Quantum symmetric {K}ac-{M}oody pairs}, Adv. Math. \textbf{267} (2014), 395--469.

\bibitem[KS09]{a-KolbStok09}
S.~Kolb and J.~Stokman, \emph{Reflection equation algebras, coideal
  subalgebras, and their centres}, Selecta Math. (N.S.) \textbf{15} (2009),
  621--664.

\bibitem[KW92]{a-KW92}
V.G. Kac and S.P. Wang, \emph{On automorphisms of {K}ac-{M}oody algebras and
  groups}, Adv. Math. \textbf{92} (1992), 129--195.

\bibitem[Let99]{a-Letzter99a}
G.~Letzter, \emph{Symmetric pairs for quantized enveloping algebras}, J.
  Algebra \textbf{220} (1999), 729--767.

\bibitem[Let02]{MSRI-Letzter}
\bysame, \emph{Coideal subalgebras and quantum symmetric pairs}, New directions
  in {H}opf algebras (Cambridge), MSRI publications, vol.~43, Cambridge Univ.
  Press, 2002, pp.~117--166.

\bibitem[Let03]{a-Letzter03}
\bysame, \emph{Quantum symmetric pairs and their zonal spherical functions},
  Transformation Groups \textbf{8} (2003), 261--292.

\bibitem[Let04]{a-Letzter04}
\bysame, \emph{Quantum zonal spherical functions and {M}acdonald polynomials},
  Adv. Math. \textbf{189} (2004), 88--147.

\bibitem[Lev88]{a-Levstein88}
F.~Levstein, \emph{A classification of involutive automorphisms of an affine
  {K}ac-{M}oody {L}ie algebra}, J. Algebra \textbf{114} (1988), no.~2,
  489--518.

\bibitem[Lus94]{b-Lusztig94}
G.~Lusztig, \emph{Introduction to quantum groups}, Birkh{\"a}user, Boston,
  1994.

\bibitem[Skl88]{a-Sklyanin88}
E.~Sklyanin, \emph{Boundary conditions for integrable quantum systems}, J. Phys. A \textbf{21} (1988), 2375--2389.

\bibitem[Twi92]{a-Twietmeyer92}
E.~Twietmeyer, \emph{Real forms of ${U}_q(\mathfrak{g})$}, Lett. Math. Phys.
  \textbf{24} (1992), 49--58.

\end{thebibliography}
\end{document}